\documentclass[final,hidelinks,onefignum,onetabnum]{siamart250106}


\usepackage{amsfonts}
\usepackage{graphicx}
\usepackage{epstopdf}
\usepackage{algorithmic}

\usepackage{amsopn}

\newcommand{\SCAL}[3][]
{%
  \ifthenelse{\equal{#1}{}}
  {\left\langle#2,#3\right\rangle}
  {\left\langle#2,#3\right\rangle_{#1}}
}
\newcommand{\NORM}[2][]
{%
  \ifthenelse{\equal{#1}{}}
  {\left\|#2\right\|}
  {\left\|#2\right\|_{#1}}
}

\newcommand{\R}{\mathbb{R}}

\newcommand{\Gc}{\mathcal{G}}

\newcommand{\length}{\mathrm{length}}

\newcommand{\Hc}{\mathcal{H}}

\newcommand{\rayl}{\mathcal{R}}

\newcommand{\eigenfun}{f}

\newcommand{\Span}{\operatorname{span}}

\newcommand{\grad}[1][]{
  \ifthenelse{\equal{#1}{}}
  {\nabla_{\edgelength}}
  {\nabla_{#1}}}

\newcommand{\Op}[2][]
{%
  \ifthenelse{\equal{#1}{}}
  {\left[#2\right]}
  {\left[#2\right]_{#1}}
}

\newcommand{\Opc}[2][]
{%
  \ifthenelse{\equal{#1}{}}
  {\left\{#2\right\}}
  {\left\{#2\right\}_{#1}}
}

\newcommand{\locgrad}[2][]
{%
  \ifthenelse{\equal{#1}{}}
  {\Opc{\grad #2}}
  {\Opc[#1]{\grad #2}}
}

\newcommand{\divg}{\operatorname{div}}
\newcommand{\Div}[2][]
{%
  \ifthenelse{\equal{#1}{}}
  {{\divg #2}}
  {{\divg #2}_{#1}}
}

\newcommand{\plap}{\Delta_p}

\newcommand{\inflap}{\Delta_{\infty}}

\newcommand{\lap}[1][]{
  \ifthenelse{\equal{#1}{}}
  {\Delta}
  {\Delta_{#1}}}

\newcommand{\lyap}[1][]{
  \ifthenelse{\equal{#1}{}}
  {\mathcal{L}}
  {\mathcal{L}_{#1}}}

\newcommand{\edgeset}{E}
\newcommand{\nodeset}{V}
\newcommand{\boundary}{B}

\newcommand{\internalnodes}{\nodeset_{\text{I}}}

\newcommand{\edgemaxset}{E_{\text{max}}}

\newcommand{\nodemaxset}{V_{\text{max}}}

\newcommand{\edgelength}{\omega}

\newcommand{\edgeweight}[1][]{
  \ifthenelse{\equal{#1}{}}
  {\mu}
  {\mu_{#1}}}
\newcommand{\nodeweight}[1][]{
  \ifthenelse{\equal{#1}{}}
  {\nu}
  {\nu_{#1}}}

\DeclareMathOperator*{\argmax}{arg\:max}
\DeclareMathOperator*{\argmin}{arg\:min}

\newcommand{\morse}[1][]{
  \ifthenelse{\equal{#1}{}}
  {\mathcal{MI}}
  {\mathcal{MI}_{#1}}}

\newcommand{\new}[1]{\textcolor{black}{#1}}

\newenvironment{NEW}
{\color{black}
}
{ 
  \color{black}
}

\newcommand{\To}{\rightarrow}
\usepackage[english]{babel}

\usepackage{color}
\usepackage{amsmath, amssymb, mathdots, accents}
\usepackage{ifthen}

\usepackage{float}
\usepackage{mathtools}

\usepackage{soul}

\usepackage{tikz, tikz-3dplot}
\usetikzlibrary{backgrounds}

\ifpdf
\DeclareGraphicsExtensions{.eps,.pdf,.png,.jpg}
\else
\DeclareGraphicsExtensions{.eps}
\fi


\newsiamremark{remark}{Remark}
\newsiamremark{example}{Example}

\headers{The graph $\infty$-Laplacian eigenvalue problem}{P. Deidda, M. Burger, M. Putti, and F. Tudisco}

\title{The graph $\infty$-Laplacian eigenvalue problem\thanks{\funding{Piero Deidda and Mario Putti are members of the Gruppo Nazionale Calcolo Scientifico-Istituto Nazionale di Alta Matematica (GNCS-INdAM).
Mario Putti was partially supported by "iNEST: Interconnected Nord-Est Innovation Ecosystem” funded under the National Recovery and Resilience Plan (NRRP), Mission 4 Component 2 Investment 1.5 - Call for tender No. 3277 of 30 december 2021 of Italian Ministry of University and Research funded by the European Union – NextGenerationEU, Project code: ECS00000043, Concession Decree No. 1058 of June 23, 2022, CUP C43C22000340006.
Piero Deidda was supported by a grant from Fondazione Ing. Aldo Gini, Padova. Piero Deidda and Francesco Tudisco were supported by the MUR-PRO3 grant STANDS. Martin Burger thanks the DFG for support via the SFB TRR 154, Subproject C06. Martin Burger acknowledges
support from DESY (Hamburg, Germany), a member of the Helmholtz Association HGF.}}}

\author{Piero Deidda\thanks{Scuola Normale Superiore, Piazza dei Cavalieri, 7, 56126 Pisa, Italy and Gran Sasso Science Institute, Viale F. Crispi, 7, 67100 L'Aquila, Italy (\email{piero.deidda@sns.it}).}
\and Martin Burger\thanks{ Helmholtz Imaging, Deutsches Elektronen Synchrotron (DESY) Notkestr. 85,
22607 Hamburg, Germany and Fachbereich Mathematik, Universität Hamburg,  Bundesstr. 55, 20146 Hamburg,
Germany (\email{martin.burger@desy.de})}
\and  Mario Putti\thanks{Department of Agronomy, Food, Natural Resources, Animals and Environment, University of Padova, Agripolis, Viale dell'Università 16, 35020 Legnaro (Padova) Italy} (\email{mario.putti@unipd.it})
\and Francesco Tudisco\thanks{School of Mathematics and Maxwell Institute for Mathematical Sciences, University of Edinburgh,  Peter Guthrie Tait Road, EH9 3FD, Edinburgh,
UK and Gran Sasso Science Institute, Viale F. Crispi, 7, 67100 L'Aquila, Italy (\email{f.tudisco@ed.ac.uk})} }

\ifpdf
\hypersetup{pdftitle={The graph infinity-Laplacian eigenvalue problem},
  pdfauthor={P. Deidda, M. Burger, M. Putti, and F. Tudisco}}
\fi



\begin{document}

\maketitle

\begin{abstract}
We analyze various formulations of the $\infty$-Laplacian eigenvalue problem on graphs, comparing their properties and highlighting their respective advantages and limitations. First, we investigate the graph $\infty$-eigenpairs arising as limits of $p$-Laplacian eigenpairs, extending key results from the continuous setting to the discrete domain. We prove that every limit of $p$-Laplacian eigenpair, for $p$ going to $\infty$, satisfies a limit eigenvalue equation and establish that the corresponding eigenvalue can be bounded from below by the packing radius of the graph, indexed by the number of nodal domains induced by the eigenfunction. Additionally, we show that the limits, for $p$ going to $\infty$, of the variational $p$-Laplacian eigenvalues are bounded both from above and from below by the packing radii, achieving equality for the smallest two variational eigenvalues and corresponding packing radii of the graph.
In the second part of the paper, we introduce generalized $\infty$-Laplacian eigenpairs as generalized critical points and values of the $\infty$-Rayleigh quotient. We prove that the generalized variational $\infty$-eigenvalues \new{equal the limit of the $p$-Laplacian variational eigenvalues and so} satisfy the same upper bounds in terms of packing radii. \new{Finally}, we establish that any solution to the limit eigenvalue equation is also a generalized eigenpair, while any generalized eigenpair satisfies the limit eigenvalue equation on a suitable subgraph. 
\end{abstract}

\begin{keywords}
$p$-Laplacian, variational eigenvalues, graph spectral theory, nonlinear eigenvalue problem, packing radii. 
\end{keywords}

\begin{MSCcodes}
	35P30, 47J10, 05C12, 05C50
\end{MSCcodes}

\section{Introduction}
We consider the problem of finding the eigenpairs of the $\infty$-Laplacian operator on graphs. While the continuous version of this problem has been studied extensively in the past few  decades~\cite{Lind2, Lind3, Lind4, Yu, Esposito, BungertInfNumerical}, its discrete counterpart on graphs remains largely unexplored. 
In recent years, eigenpairs of the graph $\infty$-Laplace operator have found potential application in a variety of fields, including machine learning and data analysis~\cite{Elmoataz2}, as well as $L^1$ optimal transport~\cite{bouchitte1997, Evans1999DifferentialEM} and the related problem of shape optimization~\cite{bouchitte2001}, spectral clustering and semi-supervised learning \cite{slepcev2019analysis, el2016asymptotic, calder2019consistency, kyng2015algorithms}, and image manipulation \cite{Elmoataz1, Elmoataz2}. 
\new{Both} in the continuous \cite{Kawohl2003, parini2010second, Lind2, Lind3} \new{and discrete} \cite{deidda2025nonlinear, Tudisco1, Bhuler, chang2016spectrum, zhang2021homological} settings, it is well known that the $p$-eigenpairs,  $p\in[1,\infty]$, encode topological information about the domain. Moreover, in~\cite{burger2016spectral, BungertNonlineardecomp}, it is shown that different variational filtering methods produce a decomposition of the signal in terms of nonlinear eigenfunctions of the filter. In particular, in the finite dimensional case any such suitable filter can be written as $\NORM[\infty]{Ax}$ for some proper matrix $A$. While this establishes a relationship with the discrete graph $\infty$-Laplacian eigenproblem, the study of these properties in the discrete setting is still largely unexplored. 
The primary challenge of the $\infty$-Laplacian eigenvalue problem arises from the non-differentiability of the $\infty$-norm, which necessitates either the extension of traditional methods to the non-smooth regime or the development of entirely new approaches.  When $1<p<\infty$, $p$-Laplacian eigenpairs are typically defined as critical points and values of the Rayleigh quotient $\rayl_p(\eigenfun) = \|\nabla \eigenfun\|_p/\|\eigenfun\|_p$. However, a straightforward generalization of the $\infty$-eigenpairs using an $\infty$-Rayleigh quotient is complicated by the lack of smoothness and continuity inherent in the $\infty$-norm.
Actually, we show in this work that different generalizations of the $p$-eigenpairs to the limit case $p=\infty$, lead to different notions of $\infty$-eigenpairs, each with distinct properties and implications.
For example, in the continuous setting, the study of the $\infty$-eigenpairs, started in~\cite{Lind2,Lind3}, is based on the study of solutions of the limit $p$-Laplacian eigenvalue equation when $p$ goes to $\infty$. However, in the discrete setting, the formulation based on the eigenvalue equation is known to be different from the one based on the idea of a generalized critical point theory for nonsmooth functionals employed in~\cite{hein2010inverse, chang2016spectrum} for the graph $1$-Laplacian eigenvalue problem.
Moreover, the generalized critical point theory has been successfully applied in recent works~\cite{BungertInfinityL2, bungert2021eigenvalue} to investigate minimizers of $\|\cdot\|_{\infty}$ in both the  $L^{\infty}$ and the $L^2$ settings, suggesting a promising direction for further explorations.
In this paper, we focus on the graph setting and examine the $\infty$-spectral problem from two complementary perspectives: the limit approach and the critical point theory. For clarity, we refer to the former as the $\infty$-limit eigenpairs and the latter as generalized $\infty$-eigenpairs.
Within the $\infty$-limit perspective, we address the extension to the discrete case of the results mainly obtained by Juutinen and co-authors \cite{Lind1, Lind2, Lind3, Lind4, Lind5}. More precisely, we are able to show that any $\infty$-limit eigenpair satisfies an $\infty$-limit eigenvalue equation. In addition, we prove that the $\infty$-limit of the $k$-th variational $p$-Laplacian eigenvalue can be bounded in terms of the $k$-th packing radius of the graph, i.e., the maximal radius that allows to inscribe $k$ distinct balls in the graph~\cite{grove1995new}. More general, we prove that the number of nodal domains of an $\infty$-eigenfunction can be used to bound the corresponding eigenvalue in terms of packing radii. Such results allow us to derive bounds from above and from below to the $\infty$-limit variational $p$-Laplacian eigenvalues in terms of the packing radii and, only for the first two $\infty$-limit variational eigenvalues, to establish equality with the first two packing radii. 
Concerning the second perspective, we study the generalized $\infty$-eigenpairs and the variational $\infty$-eigenvalue, which are a subset of the former. We provide a comparison between the two formulations of $\infty$-eigenpairs in the graph setting. We prove, first of all, that the variational generalized $\infty$-eigenvalues \new{are equal to the limit of the $p$-Laplacian variational eigenvalues}. Then, we show that the $\infty$-limit eigenvalue problem is ``stronger'' than the generalized eigenvalue problem in the sense of the following inclusions:
\begin{equation}\label{eq:eig_formulation_inclusion}
    \left\{\begin{array}{cc}
         &\text{Limit of }\\
         &\plap\text{-eigenpairs}
    \end{array}\right\}
    \subsetneq
    \left\{\begin{array}{cc}
         & \text{Solutions of the} \\
         & \text{limit eigenvalue eq.}
    \end{array} \right\} 
    \subsetneq 
    \left\{\begin{array}{cc}
         & \text{Generalized } \\
         & \infty\text{-eigenpairs }
    \end{array}\right \}\,.
\end{equation}
In more details, we consider the $\infty$-limit of $p$-Laplacian eigenpairs and prove that any accumulation point $(f,\Lambda)$ solves the system of equations
\begin{equation}\label{limit_eq}
  0=\begin{cases}
    \min\lbrace\NORM[\infty]{\grad f}-\Lambda f(u)\;,\;
    \inflap f(u)\rbrace \quad &\mathrm{if}\; f(u)>0\\
    \inflap f(u) \quad &\mathrm{if}\; f(u)=0\\
    \max\lbrace -\NORM[\infty]{\grad f}-\Lambda f(u) \;,\;
    \inflap f(u)\rbrace \quad &\mathrm{if}\; f(u)<0
  \end{cases}\,,
\end{equation}
but the inverse is not generally true.
We are also able to prove that, if $(f_k,\Lambda_k)$ is an accumulation point of the $p$-sequence of the $k$-th variational eigenpairs of the $p$-Laplacian, then
\begin{equation*}
  \Lambda_k\leq\frac{1}{R_k}\,,
\end{equation*}
where $R_k$ is the $k$-th (sphere) packing radius of the graph, i.e., the maximal radius that allows to inscribe in the graph $k$ distinct balls of radius $R_k$.
Moreover, denoting by $\mathcal{N}(f_k)$ the number of nodal domains induced by $f_k$, we can prove that 
\begin{equation*}
  \frac{1}{R_{\mathcal{N}(f_k)}}\leq \Lambda_k\,,
\end{equation*}
In particular, since we prove that for $k=1,2$ the number of nodal domains exactly equals the index, i.e. $\mathcal{N}(f_k)=k$, we are able to prove the following equalities:
\begin{equation*}
  \Lambda_k=\frac{1}{R_k}
  \quad \mbox{if} \quad
  k=1,2\,.
\end{equation*}
We obtain as a consequence that, as in the $p$-Laplacian case with $p\in(1,\infty)$ \cite{Tudisco1}, the number of nodal domains induced by the $\infty$-eigenfunctions can be used to establish relationships between the corresponding eigenvalues and the packing radii.

In~\cref{sec_inf_2} we consider the generalized eigenvalue problem 
\begin{equation}\label{gen_eq}
  0\in\partial\NORM[\infty]{\grad f}\cap \Lambda\partial\NORM[\infty]{f}\,,
\end{equation}
where $\partial\NORM[\infty]{\grad f}$ and $\partial\NORM[\infty]{f}$ denote the subdifferentials of the two associated functions of $f$. Using the generalized Lusternik-Schnirelman theory~\cite{chang2021nonsmooth}, we \new{observe that the Krasnoselskii variational $\infty$-eigenvalues}, that are always generalized eignvalues, correspond to the limit of the variational eigenvalues of the $p$-Laplacian. 
Finally, we compare the $\infty$-limit and the generalized formulations and, using a geometrical characterization of the eigenvalues and eigenfunctions, we prove that the former is stronger than the latter in the sense defined above in \eqref{eq:eig_formulation_inclusion}. In addition, we are able to prove that any solution of the second formulation, up to considering a subgraph, solves the $\infty$-limit equation \eqref{limit_eq}.

%


\section{Notation and Preliminaries}

In this section, we introduce our basic definitions and recall some preliminary results. In particular, since our goal is to prove discrete analogues of main results available for the continuous $\infty$-Laplacian eigenvalue problem, in this section, we also aim at discussing a solid framework for the discrete setting with definitions that find counterparts in the continuous case~\cite{Tudisco1, Bhuler, Amghibech1, Amghibech2}.  \new{In particular, we refer to \cite{manfredi2015nonlinear} for an overview about partial differential equations on graphs.}
\subsection{Graph setting and $p$-Laplacian operators}
We identify a graph $\Gc$ with a triplet $\Gc=(\nodeset,\edgeset,\edgelength)$, where $\nodeset$ denotes the set of nodes, $\edgeset\subset \nodeset\times\nodeset$ the set of edges, and $\edgelength:\edgeset\rightarrow\R$ is a weight on the edges. We identify an edge $(u,v)$ by the two nodes $u,v\in\nodeset$ that form its endpoints. In particular, the notation $u\sim v$ means that $(u,v)\in\edgeset$. A path $\Gamma$ connecting two nodes $v_0,v_n\in\nodeset$ on the graph is a set of edges that connect $v_0$ with $v_n$, i.e., $\Gamma=\{v_0\sim v_1\sim\dots\sim v_n\}$. We say that a graph is connected if given any two nodes $u,v\in\nodeset$, there exists a path that joins them. Since we are only interested in undirected graphs, throughout the paper we assume that whenever $(u,v)\in\edgeset$ then also $(v,u)\in\edgeset$ and $\edgelength(u,v)=\edgelength(v,u)$. To simplify the notation, in the following we write $\edgelength_{uv}:=\edgelength(u,v)$. Moreover, we assume that $\edgelength_{uv}$ can be interpreted as the inverse of the edge length leading to the following notion of edge length, and consequently, distance between two nodes.  
\begin{definition}[Length of a path and distance between two nodes]
   The length of a path $\Gamma$ is given by the sum of the reciprocals of the weights of the edges forming $\Gamma$,
  \begin{equation*}
    \length(\Gamma)=\sum_{i=1}^{n}\frac{1}{\edgelength_{v_{i-1}v_{i}}}\,.
  \end{equation*}
  The distance between two nodes $u,v\in\nodeset$ is defined as the length of the shortest path connecting $u$ and $v$,  \begin{equation}\label{distance_function}
    d(u,v)=\min\Big\lbrace \sum_{i=1}^n \edgelength_{v_{i-1} v_i}^{-1}\,: n\in\mathbb{N}, \; u=v_0\sim \dots \sim v_n=v\Big\rbrace\,.
  \end{equation}
\end{definition}
Throughout the manuscript, if not otherwise specified, we use capital letters (latin or greek) to denote edge functions and lowercase letters to denote node functions.

We proceed now by introducing discrete analogues of the continuous differential operators. The gradient of a function $f$ defined on the nodes, $f:\nodeset\rightarrow \R$, is the edge function $\grad f(u,v)$ given by
\begin{equation}\label{Grad-def}
  \grad f(u,v)=\edgelength_{uv}\big(f(v)-f(u)\big)\, ,
\end{equation}
for $(u,v)\in E$. 
Note that the linear operator $\grad$ is represented by an $|E|\times|V|$ matrix called the oriented incidence matrix of the graph.
Moreover, the assumption that the graph is undirected implies $\grad f(u,v)=-\grad f(v,u)$ for any $(u,v)\in\edgeset$.
The set of the gradients of $f$ outgoing from node $u$, is called the local gradient $\locgrad{f}(u)$ of $f$ at $u$:
\begin{equation*}
  \locgrad{f}(u):=\{\grad f (u,v)\;|\;v\sim u \}\,.
\end{equation*}
Note that, differently from the gradient operator in the continuous case, the cardinality of the local gradient is not fixed but depends on the degree of the node.
Next, given an edge function $G:\edgeset\rightarrow\R$, we introduce its divergence as the node function defined by the following expression:
\begin{equation}\label{Divergence-def}
  \Div{G}(u)=-\frac{1}{2}\grad^TG(u)
  =\frac{1}{2}\sum_{v\sim u}\edgelength_{uv}\left(G(u,v)-G(v,u)\right)
\end{equation}
Note that the above defined divergence operator satisfies the following ``integration by parts'' formula with respect to the scalar products $\SCAL[\edgeset]{\cdot}{\cdot}$ and $\SCAL[\nodeset]{\cdot}{\cdot}$  defined on the space of edge and node functions:
\begin{equation}\label{Integration_by_parts-divergence-theorem}
  \SCAL[\edgeset]{G}{\grad f}
  :=\frac{1}{2}\sum_{(u,v)\in\edgeset}G(u,v)\grad f(u,v)
  =\sum_{u\in\nodeset} -\Div{G}(u) f(u)
  =:\SCAL[\nodeset]{-\Div{G}}{f}\;.
\end{equation}
Observe that from the definition of the corresponding  scalar products defined in~\eqref{Integration_by_parts-divergence-theorem}, we can define the $p$-norms on the edge and node spaces as:
\begin{equation}\label{eq:norms}
    \NORM[p,\edgeset]{G}^p=\frac{1}{2}\sum_{(u,v)\in\edgeset}|G(u,v)|^p
    \qquad
    \NORM[p,\nodeset]{f}^p=\sum_{u\in\nodeset}|f(u)|^p\;,
\end{equation}
where we consider a one-half factor in edge-functions norms to take into account the fact that every edge has two possible directions and so it is counted twice.
Finally, borrowing from \cite{Tudisco1, DEIDDA2023_nod_dom, Bhuler}, we define the $p$-Laplace operator as follows:
\begin{definition}[Discrete $p$-Laplace Operator] \label{discrete_plap-def}
  The discrete $p$-Laplace operator is given by:
  \begin{equation*}
    \plap f(u):=
    -\divg(|\grad f|^{p-2}\odot\grad f)
    =\sum_{v\sim u}\edgelength_{uv}^{p}|f(u)-f(v)|^{p-2}\left(f(u)-f(v)\right)\,,
  \end{equation*}
  where $v\sim u$ means that $(u,v)\in \edgeset$, the absolute value in $|\grad f|$ is considered entry-wise, and ``\,$\odot$'' denotes the Hadamard or entry-wise product.
\end{definition}
Next, we introduce the discrete counterpart of the homogeneous Dirichlet boundary conditions. To this aim, we designate an arbitrary subset of the nodes $\boundary\subset\nodeset$ as the boundary of $\Gc$ and consider the set of node functions that are zero on $\boundary$: $\mathcal{H}_0(\nodeset):=\{f\,|\;f(u)=0 \;\forall u\in \boundary\}$.
From the study of the critical points of the $p$-Rayleigh quotient $\rayl_{p}:\Hc_0(\nodeset)\rightarrow\R$ given by
\begin{equation}\label{p-Rayleigh-quotient}
  \rayl_p(f)=\frac{\NORM[p]{\grad f}}{\NORM[p]{f}}
\end{equation}
we can write the discrete equivalent of the $p$-Laplacian eigenvalue equation with homogeneous Dirichlet boundary conditions \cite{Hua, PARK2011, Park, manfredi2015nonlinear} as:
\begin{equation}\label{Dirichlet-p_lap_eigenvalue_eq-intro}
  \begin{cases}
    \plap f(u)=\lambda |f|^{p-2}(u)f(u)\quad &u\in\internalnodes\\
    f(u)=0 & u\in\boundary
  \end{cases}\,.
\end{equation}
If $\boundary\neq\emptyset$, we can introduce the distance from the boundary given by:
\begin{equation}\label{boundary_distance}
  d_{\boundary}(u)=\min_{v\in\boundary}d(u,v)\,.
\end{equation}
In particular, if a graph has nonempty boundary we denote by $\internalnodes:=\nodeset\setminus\boundary$ the set of internal nodes and we say that the graph is connected if the subgraph induced by the internal nodes is connected. If not otherwise stated and independently of the presence of the boundary, throughout the paper we assume a graph to be connected.
We define the $p$-Laplacian variational eigenvalues $\lambda_k(\plap)$ as in~\cite{Tudisco1, struwe} and write:
\begin{equation}\label{DEf:p-Lap_var_eigenvalues}
  \lambda_k(\plap)=
  \min_{A\in\mathcal{F}_k(S_p)}\max_{f\in A}\rayl_p^p(f)\,,
\end{equation}
where $S_p:=\{f\in \Hc_0(\nodeset)|\; \|f\|_p=1\}$ and $\mathcal{F}_k(S_p)$ is the set of the closed and symmetric subsets of $S_p$ with Krasnoselskii genus greater than or equal to $k$. We recall that the Krasnoselskii genus of a closed symmetric set $A$ is defined as follows~\cite{struwe, Ghoussoub, Fucik}:
\begin{definition}[Krasnoselskii Genus]\label{Def-Krasnoselskii-genus}
  The Krasnoselskii Genus $\gamma(A)$ of a closed symmetric set $A$ is:
  \begin{equation}\label{Krasnoselskii_genus_Infinity}
    \gamma(A):=
    \begin{cases}
      0 & \text{if $A=\emptyset$}
      \\
      \inf\left\{h\in\mathbb{N}\,: \,
      \exists\,\varphi\in C(A, \R^h\setminus\{0\})
      \text{ s.t. } \varphi(x)=-\varphi(-x)\right\} &
      \\
      \infty \quad & \text{if $\nexists \; h$ as above}
    \end{cases}.
  \end{equation}
\end{definition}
Given the $\infty$-norms of a node function $f$ and of its gradient:
\begin{equation}\label{Infinity-Norms}
  \NORM[\infty]{f}=\max_{u\in\nodeset}\{|f(u)|\}\,,\qquad
  \NORM[\infty]{\grad f}=\max_{(u,v)\in\edgeset}\{|\grad f(u,v)|\},
\end{equation}
we can introduce the maximal node and edge sets:
\begin{equation*}
\begin{aligned}
  \nodemaxset(f)&=\{u\in\nodeset\::\: |f(u)|=\NORM[\infty]{f}\}\,,\\
  \edgemaxset(f)&=\{(u,v)\in\edgeset\::\: |\grad f(u,v)|=\NORM[\infty]{\grad f}\}\, .
\end{aligned}
\end{equation*}
Before considering the $\infty$-eigenvalue problem we need to introduce the corresponding $\infty$-Laplacian operator. We start by recalling the definition in the continuous setting~\cite{Lind1} (differently from ~\cite{Lind1} we are adding a minus sign to be consistent with our definition of the $p$-Laplacian operator that is positive definite differently from the one considered in \cite{Lind1}):
\begin{definition}[Continuous $\infty$-Laplacian]
  \label{Continuous_Inf_lap_def}
  \begin{equation*}
    \inflap f:=-\sum_{i,j}^n
    \frac{\partial f}{\partial x_i}
    \frac{\partial f}{\partial x_j}
    \frac{\partial^2 f}{\partial x_i\partial x_j}
    =-(\nabla f)^T H(f) \nabla f \,,
  \end{equation*}
  where $H(f)$ denotes the Hessian matrix of a twice differentiable function $f$ and $\nabla f$ its standard gradient.
\end{definition}
Clearly, differently from the $p$-Laplacian operator with $p<\infty$, this definition cannot be extended to the discrete setting in a straight-forward manner because of the presence of the Hessian matrix. Nevertheless, a definition of the discrete $\infty$-Laplacian operator is given in~\cite{manfredi2015nonlinear, Elmoataz1, elmoataz2011infinity} as follows:
\begin{definition}[Discrete $\infty$-Laplacian Operator]
  \label{discrete_inf_lap_def}
 The discrete $\infty$-Laplacian operator can be written as:
  \begin{equation*}
    \inflap f(u)=\Big(\| \Opc{\grad f}^{-}(u) \|_{\infty}-\| \Opc{\grad f}^{+}(u) \|_{\infty}\Big)\,,
  \end{equation*}
  where $x^+:=\max\lbrace x,0 \rbrace$, $x^-:=\max\lbrace -x,0 \rbrace$. 
\end{definition}
This definition is consistent with the continous setting. Indeed, indicating with $f$ the limit as $p$ goes to $\infty$ of the solutions of $\plap f_p=0$ with prescribed values on the boundary, it is easy to see that $f$ must satisfy $\inflap f=0$ for all $u\in\internalnodes$~\cite{Elmoataz2, Elmoataz1, elmoataz2011infinity, Lind1, manfredi2015nonlinear}. 
In particular, the above definition is a direct consequence of the following reformulation of the discrete $p$-Laplacian operator \cite{elmoataz2011infinity}:
\begin{equation}\label{discrete_plap_pnorms-difference_def}
 \begin{aligned}
   \plap f(u):
   =&\sum_{v\in \nodeset}
      \edgelength_{uv}^{p}|f(u)-f(v)|^{p-2}\big(f(u)-f(v)\big)\\
   =&\Big(\sum_{v\in\nodeset}\edgelength_{uv}^{p}
      \big|\big(f(u)-f(v)\big)^{+}\big|^{p-1}-
      \sum_{v\in\nodeset} \edgelength_{uv}^{p}
      \big|\big(f(u)-f(v)\big)^-\big|^{p-1}\Big)\\
   =&\Big(\| \Opc{\grad[\edgelength'] f}^{-}(u)
      \|_{{p-1}}^{p-1}-\| \Opc{\grad[\edgelength'] f}^{+}(u) \|_{{p-1}}^{p-1}\Big)\,,
 \end{aligned}
\end{equation}
where 
$\edgelength'_{uv}=\edgelength_{uv}^{\frac{p}{p-1}}$.

\begin{remark}\label{Remark_boundary_conditions}
  Note that, given a graph with boundary $\boundary$, using the inclusion operator $P:\mathcal{H}(\internalnodes)\rightarrow\mathcal{H}_0(\nodeset)$, it is easy to see that the space $\mathcal{H}_0(\nodeset)$ is isomorphic to the space $\mathcal{H}(\internalnodes):=\{f:\internalnodes\rightarrow\R\}$: 
  \begin{equation*}
    Pf(u)=\begin{cases}
      f(u) \quad\forall u\in\internalnodes\\
      0 \quad\forall u\in\boundary
    \end{cases}.
  \end{equation*}
  This implies that $Pf$ is a $p$-Laplacian eigenfunction if $f$ satisfies: 
  \begin{equation*}
    \frac{1}{2}P^T \grad^T(|\grad P f|^{p-2}\odot \grad P f)(u)
    =\lambda |f(u)|^{p-2} f(u) \qquad \forall u\in\internalnodes,
  \end{equation*}
  allowing us to forget the homogeneous Dirichlet boundary conditions. Thus, for the sake of notation, we will often refer to the eigenvalue problem in $\mathcal{H}(\internalnodes)$ and, with a small abuse of notation we will write $\grad: \mathcal{H}(\internalnodes)\rightarrow\mathcal{H}(\edgeset)$, $\divg:\mathcal{H}(\edgeset) \rightarrow \mathcal{H}(\internalnodes)$ and $\plap: \mathcal{H}(\internalnodes) \rightarrow \mathcal{H}(\internalnodes)$ to denote 
  \begin{equation*}
    \grad:=\grad P, \qquad \divg:=P^T \divg,
    \qquad
    \plap f:= - P^T \divg (|\grad P f|^{p-2}\odot \grad P f)\,,
  \end{equation*}
  where, in the last equalities, the $\grad$ and $\divg$ operators appearing in right hand sides are defined according to the definitions given in eqs. \eqref{Grad-def} and \eqref{Divergence-def}. Similarly, given a function $f\in\mathcal{H}(\internalnodes)$, we can modify the definition of the local gradient, of the $\infty$-Laplacian, and of the $p$-norm in accordance with the definition of the $p$-Rayleigh quotient:
  \begin{equation*}
    \begin{aligned}
      & \grad f(u):=\{\grad Pf(u,v)|v\sim u\}\qquad u\in\internalnodes \\
      & \inflap f(u)=\Big(\|\Opc{\grad P f}^{-}(u)\|_{\infty}- \|\Opc{\grad P f}^{+}(u)\|_{\infty}\Big) \qquad u\in\internalnodes
    \end{aligned}
  \end{equation*}
\begin{equation*}
  \NORM[p,\nodeset]{f}^p=
  \sum_{u\in\nodeset}|Pf(u)|^p
  =\sum_{u\in\internalnodes}|f(u)|^p
  \qquad
  \NORM[\infty]{f}=\max_{u\in\internalnodes}\{|f(u)|\}.
\end{equation*}
\end{remark}

%


\section{The $\infty$-limit eigenvalue problem}\label{sec_inf_1}
In this section we start discussing the $\infty$-Laplacian eigenvalue problem. In particular we concentrate on the ``$\infty$-limit eigenvalue problem'', i.e. we consider the limit as $p\To\infty$ of the $p$-Laplacian eigenvalue equation and we discuss its solutions. This approach has been thoroughly investigated in the continuous setting \cite{Lind2,Lind3,Lind4, Lind5, Esposito, Yu}. Here, we start by recalling the main results in the continuous setting and then go on to prove analogous results in the discrete setting. We point out that, even if the results in the two settings are equivalent, the approaches to obtain them are different.
We start by observing that, in both continuous and discrete settings, the limit of appropriate subsequences of $p$-Laplacian eigenpairs $(f(\plap),\lambda(\plap))$ leads to an $\infty$-eigenfunction whose corresponding eigenvalue can be expressed formally as:
\begin{equation}\label{inf_eigenvalues_def}
\Lambda=``\lim_{p\rightarrow \infty}\big(\lambda(\plap)\big)^{\frac{1}{p}}\,"\,.
\end{equation}
\new{In particular, in the discrete setting} we know that the set $\lbrace\lambda^{\frac{1}{p}}(\plap)\rbrace_p$ is bounded \cite{DEIDDA2023_nod_dom} and hence we can extract from it at least one convergent subsequence. 

\begin{NEW}
In particular, as a consequence of Lemma 4.2 in~\cite{deidda2025nonlinear}, for $1\leq  p\leq q$ and any $f\in \mathcal{H}_0(\nodeset)$ it holds 
\begin{equation}\label{eq_ineq_rayl_pq}
\left(|E|/2\right)^{\frac{1}{p}-\frac{1}{q}}\rayl_p(f)\leq \rayl_q(f)\leq |\internalnodes|^{\frac{1}{p}-\frac{1}{q}}\rayl_p(f),
\end{equation}
where $|\internalnodes|$ and $|\edgeset|$ correspond to the cardinality of the sets of internal nodes and edges, respectively.
    Analogously to \eqref{DEf:p-Lap_var_eigenvalues}, we define
    \begin{equation}\label{def_inf_var_eigenvalues}
      \Lambda_k(\inflap):=
      \min_{A\in\mathcal{F}_k(S_\infty)}\max_{f\in A}\rayl_\infty(f)\,,
       \end{equation}
     where $\mathcal{F}_k(S_\infty)$ is the set of the closed and symmetric subsets of $S_\infty$ with Krasnoselskii genus greater than or equal to $k$.
     Then as a consequence of \eqref{eq_ineq_rayl_pq} we have    %
\begin{equation}\label{eq_monotonocity_variational_eigenvalue}
    \left(|E|/2\right)^{\frac{1}{q}-\frac{1}{p}}\lambda_k(\Delta_p)^{\frac{1}{p}}\leq \lambda_k(\Delta_q)^{\frac{1}{q}}\leq |\internalnodes|^{\frac{1}{p}-\frac{1}{q}}\lambda_k(\Delta_p)^{\frac{1}{p}},
\end{equation}
which holds also for $q=\infty$ by replacing $\lambda_k(\Delta_q)^{1/q}$ with $\Lambda_k(\Delta_\infty)$.
 The last equation proves that the limit of the variational eigenvalues exists and is given by: 
\begin{equation}
    \lim_{p\to \infty}\lambda_k(\plap)^{1/p}=\Lambda_k(\inflap).    
\end{equation}
Additionally, eq. \eqref{eq_monotonocity_variational_eigenvalue} shows that
 $$|\lambda_k(\plap)^{1/p}-\Lambda_k(\Delta_\infty)|\leq C\max\{(1-\left(|E|/2\right)^{-\frac{1}{p}}),(|\internalnodes|^{\frac{1}{p}}-1)\},$$
 where $C$ bounds $\lambda_k(\plap)^\frac{1}{p}$. Note that the last inequality also states that the variational eigenvalues converge at least as $O(1/p)$.
\end{NEW}
In the continuous setting, the limit of the $p$-eigenpairs can be studied by considering the $\infty$-eigenvalue equation for a general eigenvalue $\Lambda$, see~\cite{Lind3}:
\begin{equation}\label{inf_continuous_eigenvalue_eq}
  0=\begin{cases}
    \min\lbrace |\grad f|-\Lambda f,\inflap f \rbrace \quad &\mathrm{if}\; f>0\\
    \inflap f   &\mathrm{if}\; f=0\\
    \max\lbrace -|\grad f|-\Lambda f,\inflap f \rbrace \quad &\mathrm{if}\; f<0
  \end{cases}\,,
\end{equation}
where the above equation has to be understood in the sense of viscosity solutions (see \cite{Lind2} for a short and clear discussion about viscosity solutions in this setting). 
In particular it is possible to show that if $(f,\Lambda)=\lim_{p\rightarrow \infty}(f_p,\lambda_p^{\frac{1}{p}})$, where $(f_p,\lambda_p)$ is an eigenpair of the $p$-Laplacian for a given $p$ then $(f,\Lambda)$ solves \eqref{inf_continuous_eigenvalue_eq}. In addition, whenever a pair $(f,\Lambda)$ solves eq.\eqref{inf_continuous_eigenvalue_eq}, then
\begin{equation*}
  \Lambda=\frac{\| \grad f\|_{\infty}}{\| f \|_{\infty}}\;.
\end{equation*}
By means of this equation it is possible to relate the limit of the variational $p$-Laplacian eigenvalues to the packing radii of $\Omega$. Indeed, given a bounded domain $\Omega\in\R^n$ and an integer $k$, the $k$-th packing radius of $\Omega$ is defined as the maximal radius $r\in\R$ that allows inscribing $k$ distinct balls in $\Omega$, i.e.:
\begin{equation*}
  R_k:=\sup\lbrace r\in\R \text{ s.t. }
  \exists B_r(x_1),\dots,B_{r}(x_k)\subset \Omega,
  \; B_r(x_i)\cap B_r(x_j)=\emptyset\;
  \forall i,j=1,\dots,k \rbrace\,,
\end{equation*}
where $B_r(x_i)$ is the $n$-dimensional ball of radius $r$ centered in $x_i$, $x_i\in\Omega$. Then, given $\Lambda_k$ an accumulation point of the sequence of the $k$-th variational eigenvalues of the \new{continuous}  $p$-Laplacian, i.e.
\begin{equation*}
  \Lambda_k=\lim_{h\rightarrow \infty} \lambda_k^{\frac{1}{p_h}}(\Delta_{p_h})\,,
\end{equation*}
it is possible to relate such value to $R_k$ (see \cite{Lind3,Lind2}) and obtain the following inequality: 
\begin{equation*}
  \Lambda_k\leq \frac{1}{R_k}\,.
\end{equation*}
For the particular cases $k=1,2$, also the opposite inequality holds and the sequences $\{\lambda_{1,2}^{\frac{1}{p}}(\plap)\}_{p=2}^{\infty}$ converge. Thus, we can write:
\begin{equation*}
  \Lambda_1=\frac{1}{\|d_{\partial\Omega}\|_{\infty}}\,, \qquad \Lambda_2= \frac{1}{R_2}\,,
\end{equation*}
where $\partial\Omega$ denotes the boundary of the domain $\Omega$. Several studies have concerned the $\infty$-eigenfunctions associated to the first eigenvalue $\Lambda_1$ (see \cite{Lind1,Yu, juutinen1999infinity, hynd2013nonuniqueness}).
Indeed, despite the fact that the boundary distance function $d_{\partial\Omega}$ is always a minimizer of $\rayl_{\infty}$, it is not always an eigenfunction, meaning that, depending on $\Omega$, the pair $(d_{\partial\Omega},1/\|d_{\partial\Omega}\|_{\infty})$ may not solve eq.~\eqref{inf_continuous_eigenvalue_eq}.
In addition, the first eigenfunction may not be unique and there may be first eigenfunctions that solve the $\infty$-limit eigenvalue equation without beeing limits of eigenfunctions of the $p$-Laplacian.



\subsection{The $\infty$-limit eigenvalue equation on a graph}
In this section we formulate discrete analogues of the $\infty$-eigenvalue eq.~\eqref{inf_continuous_eigenvalue_eq} and discuss their solutions. Note that, given a a function $f\in\mathcal{H}(\internalnodes)$ , using the equalities in~\eqref{discrete_plap_pnorms-difference_def} we can rewrite the eigenvalue equation of the $p$-Laplacian given in \eqref{Dirichlet-p_lap_eigenvalue_eq-intro} as:
\begin{subequations}
  \begin{align}
    \label{plap_splitted_eigenvalue_eq}
    &\begin{cases}
      \left(\NORM{\Opc{\grad[\edgelength'] f}^-(u)}_{p-1}^{p-1}
      -\NORM{\Opc{\grad[\edgelength'] f}^+(u)}_{p-1}^{p-1}\right)^{\frac{1}{p-1}}
      =\lambda^{\frac{1}{p-1}} f(u) \\[1.1ex]
      \NORM{\Opc{\grad[\edgelength'] f}^-(u)}_{p-1}
      -\NORM{\Opc{\grad[\edgelength'] f}^+(u)}_{p-1}>0 
    \end{cases} \!\!\!\!\!\!\!\!
      \quad\text{if } f(u)>0 \\[1.5em]
    &\begin{cases}
      \left(\NORM{\Opc{\grad[\edgelength'] f}^+(u)}_{p-1}^{p-1}
      -\NORM{\Opc{\grad[\edgelength']f}^-(u)}_{p-1}^{p-1}\right)^{\frac{1}{p-1}}
      =-\lambda^{\frac{1}{p-1}} f(u) \\[1.1ex]
      \NORM{\Opc{\grad[\edgelength'] f}^-(u)}_{p-1}
      -\NORM{\Opc{\grad[\edgelength'] f}^+(u)}_{p-1}^{p-1}<0 
    \end{cases}\!\!\!\!\!\!\!\!
      \quad\text{if } f(u)<0 \\[1.5em]
    &\phantom{.}\,\:\NORM{\Opc{\grad[\edgelength'] f}^-(u)}_{p-1}
      -\NORM{\Opc{\grad[\edgelength'] f}^+(u)}_{p-1}=0\,\,
      \qquad\text{if } f(u)=0
  \end{align}
\end{subequations}
where, as in eq.~\eqref{discrete_plap_pnorms-difference_def}, $\edgelength'_{uv}:=\edgelength_{uv}^{\frac{p}{p-1}}$.
In the following Theorem we prove that any accumulation point of a sequence of $p$-Laplacian eigenpairs satisfies the $\infty$-limit eigenvalue equation. 
Before stating the Theorem, we would like to observe that, as shown in~\cite{DEIDDA2023_nod_dom}, any $p$-Laplacian eigenvalue $\lambda(\plap)$ satisfies the following inequality:
\begin{equation*}
  \lambda(\plap)\leq
  \max_{u\in\internalnodes} 2^{p-1}\sum_{v\sim u}\edgelength_{uv}^p  \,.
\end{equation*}
Thus, the set of the $p$-roots of the $p$-Laplacian eigenvalues as $p$ varies is bounded, i.e.:
\begin{equation*}
  \text{there exists a constant } C>0
  \text{ s.t. } \lambda^{\frac{1}{p}}<C
  \text{ for all } \lambda\in\sigma(\plap)\,,
\end{equation*}
where $\sigma(\plap)$ denotes the spectrum of the graph $p$-Laplacian.
\begin{theorem}\label{limitin_infty_eigenvalue_theorem}
  Let $(f_{p_j},\lambda_{{p_j}})$ be a sequence of $p$-Laplacian
  eigenpairs and assume that $\displaystyle\lim_{j\rightarrow
    \infty}(f_{p_j},\lambda_{p_j}^{\frac{1}{p_j}})=(f,\Lambda)$. Then
  the pair $(f,\Lambda)$ satisfies the following set of equations \new{for all $u\in\internalnodes$}:
  \begin{equation}\label{limitin_inf_eigenvalue_eq}
    0=\new{\Theta_{\infty}(f)(u)=}\begin{cases}
      \min\lbrace\|\Opc{\grad f}(u)\|_{\infty}- \Lambda f(u)\,,\, \inflap f(u)\rbrace \quad &\text{if } f(u)>0\\
      \inflap f(u)
      \quad &\text{if } f(u)=0\\
      \max\lbrace -\|\Opc{\grad f}(u)\|_{\infty}- \Lambda f(u) \,,\, \inflap f(u)\rbrace \quad &\text{if } f(u)<0
    \end{cases}.
  \end{equation}
\end{theorem}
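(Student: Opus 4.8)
The plan is to pass to the limit in the reformulated $p$-Laplacian eigenvalue equation \eqref{plap_splitted_eigenvalue_eq}, exploiting the fact that for a fixed vector $x = (x_1,\dots,x_m) \in \R^m$ one has $\lim_{q\to\infty}\NORM[q]{x} = \NORM[\infty]{x}$, and more precisely $\NORM[q]{x}^{1/1} \to \NORM[\infty]{x}$ with good control on the rate. Since $(f_{p_j},\lambda_{p_j}^{1/p_j}) \to (f,\Lambda)$ and every node has finite degree, for each fixed $u \in \internalnodes$ the finite collection of numbers $\{\edgelength_{uv}(f_{p_j}(v)-f_{p_j}(u)) : v\sim u\}$ converges entrywise to $\{\grad f(u,v): v\sim u\} = \locgrad{f}(u)$, and similarly $\edgelength'_{uv} = \edgelength_{uv}^{p_j/(p_j-1)} \to \edgelength_{uv}$. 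Hence $\NORM{\Opc{\grad[\edgelength'] f_{p_j}}^{\pm}(u)}_{p_j-1} \to \NORM[\infty]{\Opc{\grad f}^{\pm}(u)}$, which are exactly $\NORM[\infty]{\Opc{\grad f}^-(u)}$ and $\NORM[\infty]{\Opc{\grad f}^+(u)}$ appearing in $\inflap f(u)$.

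First I would treat the case $f(u) > 0$. Then $f_{p_j}(u) > 0$ for $j$ large, so \eqref{plap_splitted_eigenvalue_eq} gives two pieces of information: the strict inequality $\NORM{\Opc{\grad[\edgelength'] f_{p_j}}^-(u)}_{p_j-1} - \NORM{\Opc{\grad[\edgelength'] f_{p_j}}^+(u)}_{p_j-1} > 0$, which passes to the limit (non-strictly) to give $\inflap f(u) \ge 0$; and the equality
\[
  \Big(\NORM{\Opc{\grad[\edgelength'] f_{p_j}}^-(u)}_{p_j-1}^{p_j-1} - \NORM{\Opc{\grad[\edgelength'] f_{p_j}}^+(u)}_{p_j-1}^{p_j-1}\Big)^{\frac{1}{p_j-1}} = \lambda_{p_j}^{\frac{1}{p_j-1}} f_{p_j}(u).
\]
The right-hand side converges to $\Lambda f(u)$ (using $\lambda_{p_j}^{1/p_j}\to\Lambda$, boundedness of $\lambda_{p_j}^{1/p_j}$, and $p_j/(p_j-1)\to 1$). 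For the left-hand side I would argue that the expression $\big(a_j^{p_j-1} - b_j^{p_j-1}\big)^{1/(p_j-1)}$, with $a_j \to a := \NORM[\infty]{\Opc{\grad f}^-(u)} \ge 0$ and $b_j \to b := \NORM[\infty]{\Opc{\grad f}^+(u)} \ge 0$, converges to $\max\{a,b\}$ if $a > b$ (here $a - b = \inflap f(u) \ge 0$, and when $a>b$ the $a^{p_j-1}$ term dominates, giving limit $a$) and to $0$ if $a = b$. In the first subcase we get $a = \NORM[\infty]{\Opc{\grad f}^-(u)} = \Lambda f(u)$; combined with $a = \max\{a,b\} = \NORM[\infty]{\locgrad{f}(u)}$ (since $\locgrad f(u)$ has entries among the $\pm$ parts) this yields $\NORM[\infty]{\locgrad{f}(u)} - \Lambda f(u) = 0$ together with $\inflap f(u) = a - b \ge 0$, so the $\min$ in \eqref{limitin_inf_eigenvalue_eq} is $0$. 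In the second subcase $a = b$ forces $\inflap f(u) = 0$ and $\Lambda f(u) = 0$, hence $\NORM[\infty]{\locgrad f(u)} - \Lambda f(u) = \NORM[\infty]{\locgrad f(u)} = a \ge 0$, and again the $\min$ is $0$. The case $f(u) < 0$ is symmetric (apply the above to $-f$, or argue directly with the second branch of \eqref{plap_splitted_eigenvalue_eq}), and the case $f(u) = 0$ requires showing $f_{p_j}(u)$ need not be zero but $\NORM{\Opc{\grad[\edgelength'] f_{p_j}}^-(u)}_{p_j-1}^{p_j-1} - \NORM{\Opc{\grad[\edgelength'] f_{p_j}}^+(u)}_{p_j-1}^{p_j-1} = \lambda_{p_j}|f_{p_j}(u)|^{p_j-2}f_{p_j}(u)$, divide by $\edgelength$-type normalizations and pass to the limit to obtain $\inflap f(u) = 0$ — here one uses that $\lambda_{p_j}^{1/p_j} f_{p_j}(u)^{(p_j-2)/(p_j-1)} \cdot |f_{p_j}(u)|^{?}$ vanishes because $f(u)=0$ while the difference of $(p_j-1)$-th powers, after taking $(p_j-1)$-th roots, controls $\inflap f(u)$; more cleanly, divide the equation by the largest $\edgelength_{uv}^{p_j}$ and note both sides go to the right form.

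The main obstacle I expect is the delicate limiting behavior of the expression $\big(a_j^{p_j-1}-b_j^{p_j-1}\big)^{1/(p_j-1)}$ when $a$ and $b$ are close or equal, and in the $f(u)=0$ case where the algebraic structure of \eqref{plap_splitted_eigenvalue_eq} (which presupposes $f(u)\ne 0$ via the $(p-1)$-th root) does not directly apply — there I would go back to the raw form in \cref{discrete_plap-def}, namely $\sum_{v\sim u}\edgelength_{uv}^{p_j}|f_{p_j}(u)-f_{p_j}(v)|^{p_j-2}(f_{p_j}(u)-f_{p_j}(v)) = \lambda_{p_j}|f_{p_j}(u)|^{p_j-2}f_{p_j}(u)$, observe the right side tends to $0$ faster than any relevant scale because $f(u)=0$, rewrite the left side as $\NORM{\Opc{\grad[\edgelength'] f_{p_j}}^-(u)}_{p_j-1}^{p_j-1} - \NORM{\Opc{\grad[\edgelength'] f_{p_j}}^+(u)}_{p_j-1}^{p_j-1}$, and take $(p_j-1)$-th roots using the sign-robust convergence $(a_j^{p_j-1}-b_j^{p_j-1})^{1/(p_j-1)} \to a-b$ when... — actually here $a-b$ may be zero or nonzero but the right side having $(p_j-1)$-th root tending to $0$ forces, via the same dominated-term analysis, $a = b$, i.e. $\inflap f(u)=0$. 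Care is also needed to confirm $\edgelength'_{uv}\to\edgelength_{uv}$ does not disturb the entrywise limits, which is immediate since $\edgelength_{uv}$ is a fixed positive number and $p_j/(p_j-1)\to1$. I would organize the write-up by first proving the scalar lemma on $(a_j^{q}-b_j^{q})^{1/q}$ as $q\to\infty$, then dispatching the three cases in turn.
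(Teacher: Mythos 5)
Your overall strategy --- passing to the limit in \eqref{plap_splitted_eigenvalue_eq} and deciding which of the two terms dominates in $\bigl(a_j^{p_j-1}-b_j^{p_j-1}\bigr)^{1/(p_j-1)}$, with $a_j=\NORM{\Opc{\grad[\edgelength'] f_{p_j}}^-(u)}_{p_j-1}$ and $b_j=\NORM{\Opc{\grad[\edgelength'] f_{p_j}}^+(u)}_{p_j-1}$ --- is exactly the paper's, and your treatment of the subcase $a>b$ and of the case $f(u)=0$ is sound. The genuine defect is the scalar lemma in the degenerate case: the claim that $\bigl(a_j^{q_j}-b_j^{q_j}\bigr)^{1/q_j}\to 0$ whenever $a=b$ is false. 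Take $a_j=a>0$ and $b_j=a(1-1/q_j)$: then $b_j\to a$ but $a_j^{q_j}-b_j^{q_j}=a^{q_j}\bigl(1-(1-1/q_j)^{q_j}\bigr)\sim a^{q_j}(1-e^{-1})$, whose $q_j$-th root tends to $a$, not to $0$. In general, when $a=b$ the limit is not determined by $a$ and $b$ alone and can be any value in $[0,a]$; all one may assert is that it is $\leq a$. Your deduction ``$a=b$ forces $\Lambda f(u)=0$'' is therefore unjustified and in fact false: it would give $\Lambda=0$ at every node with $f(u)>0$ and $\inflap f(u)=0$ (e.g.\ the node $u_2$ in Example~\ref{ex:1}, where $f(u_2)=2/3$, $\inflap f(u_2)=0$ and $\Lambda=1$), contradicting Proposition~\ref{Prop_maximal_points}. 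The theorem's conclusion at such nodes happens to survive in your write-up only because $\inflap f(u)=a-b=0$ already annihilates the $\min$, but the stated lemma cannot be proved and the identity $\Lambda f(u)=0$ must go.

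The repair is small and is precisely what the paper does: factor the left-hand side as $a_j\bigl(1-(b_j/a_j)^{p_j-1}\bigr)^{1/(p_j-1)}$ and note that the bracketed factor never exceeds $1$. The dichotomy should then be taken on the value $\ell\in[0,1]$ of the limit of that factor, not on whether $a=b$: if $\ell=1$ one obtains $\NORM[\infty]{\Opc{\grad f}(u)}=\Lambda f(u)$, while if $\ell<1$ one deduces (in this direction only) that $a=b$, i.e.\ $\inflap f(u)=0$, and still retains $\NORM[\infty]{\Opc{\grad f}(u)}-\Lambda f(u)=a-\ell a\geq 0$. Either way the $\min$ in \eqref{limitin_inf_eigenvalue_eq} vanishes. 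With this correction your argument coincides with the paper's proof.
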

\begin{proof}
  To prove the assertion, consider the case $f(u)>0$. Then, for any $j$ large enough,  by \eqref{plap_splitted_eigenvalue_eq}, we can write
  \begin{equation}\label{eq1_limit_eig_eq}
    \begin{cases}
    \displaystyle{\| \Opc{\grad[\edgelength'] f_{p_j}}^-(u) \|_{p_j-1}\Big(1-\Big(\frac{\| \Opc{\grad[\edgelength'] f_{p_j}}^+(u) \|_{p_j-1}}{\| \Opc{\grad[\edgelength'] f_{p_j}}^-(u) \|_{p_j-1}}\Big)^{p_j-1}\Big)^{\frac{1}{p_j-1}} =\lambda_{p_j}^{\frac{1}{p_j-1}} f_{p_j}(u)\,,} \\
      \| \Opc{\grad[\edgelength'] f_{p_j}}^-(u) \|_{p_j-1} >
      \| \Opc{\grad[\edgelength'] f_{p_j}}^+(u) \|_{p_j-1}\,.
    \end{cases}
  \end{equation}
  Taking the limit we have
  \begin{equation}\label{eq2_limit_eig_eq}
    \begin{cases}\displaystyle
      \|\Opc{\grad f}(u)
      \|_{\infty}
      \lim_{j\rightarrow\infty}
      \Big(1-\Big(
      \frac{\|\Opc{\grad[\edgelength']f_{p_j}}^+(u) \|_{p_j-1}}%
      {\|\Opc{\grad[\edgelength']f_{p_j}}^-(u)\|_{p_j-1}}
      \Big)^{p_j-1}\Big)^{\frac{1}{p_j-1}}
      =\Lambda f(u) \\ 
      \|\Opc{\grad f}^-(u)\|_{\infty}\geq\|\Opc{\grad f}^+(u)\|_{\infty}
    \end{cases},
  \end{equation}
  where, we have used the equality $\|\Opc{\grad f}^-(u)\|_{\infty}=\|\Opc{\grad f}(u)\|_{\infty}$ that follows trivially from the second inequality in~\eqref{eq1_limit_eig_eq}. From this same inequality, we observe that 
  \begin{equation*}
    \lim_{j\rightarrow\infty}\bigg(1-\bigg(\frac{\| \Opc{\grad[\edgelength'] f_{p_j}}^+(u) \|_{p_j-1}}{\| \Opc{\grad[\edgelength'] f_{p_j}}^-(u) \|_{p_j-1}}\bigg)^{p_j-1}\bigg)^{\frac{1}{p_j-1}}\leq 1,
  \end{equation*}
and thus, if $\NORM[\infty]{\Opc{\grad f}^-(u)} \gneqq \| \Opc{\grad f}^+(u) \|_{\infty}$, we obtain:
  \begin{equation*}
    \lim_{j\rightarrow\infty}\bigg(1-\bigg(\frac{\| \Opc{\grad[\edgelength'] f_{p_j}}^+(u) \|_{p_j-1}}{\| \Opc{\grad[\edgelength'] f_{p_j}}^-(u) \|_{p_j-1}}\bigg)^{p_j-1}\bigg)^{\frac{1}{p_j-1}}= 1.
  \end{equation*}
  Replacing the last equation in~\eqref{eq2_limit_eig_eq} we obtain:
  \begin{equation*}
    \| \Opc{\grad f}(u) \|_{\infty} =\Lambda f(u)\,.
  \end{equation*}
  Vice versa, if  ${\displaystyle\lim_{j\rightarrow\infty}}\Big(1-\Big(\frac{\| \Opc{\grad[\edgelength'] f_{p_j}}^+(u) \|_{p_j-1}}{\| \Opc{\grad[\edgelength'] f_{p_j}}^-(u) \|_{p_j-1}}\Big)^{p_j-1}\Big)^{\frac{1}{p_j-1}}< 1$, then necessarily
  we have 
  \begin{equation*}
    \| \Opc{\grad f}^-(u) \|_{\infty} = \| \Opc{\grad f}^+(u) \|_{\infty}\,,
  \end{equation*}
  showing that~\eqref{limitin_inf_eigenvalue_eq} is satisfied.
  The cases $f(u)=0$ and $f(u)<0$ can be proved analogously, concluding the proof.
\end{proof}
We point out that eq.~\eqref{limitin_inf_eigenvalue_eq} is very similar to the continuous analogue in~\eqref{inf_continuous_eigenvalue_eq}, with the only difference being the absolute value replaced by the $\infty$-norm of the local gradient.
Next we show that whenever $(f,\Lambda)$ satisfies the limit eigenvalue eq~.\eqref{limitin_inf_eigenvalue_eq}, $\Lambda$ matches the value of the $\infty$-Rayleigh quotient computed in $f$, i.e. $\rayl_{\infty}(f)$. 
\begin{proposition}\label{Prop_maximal_points}
  Assume that $(f,\Lambda)$ satifies eq.~\eqref{limitin_inf_eigenvalue_eq}, $f$ is not a constant function, and $u\in\nodemaxset(f)$. Then $\|\grad f\|_{\infty}=\| \Opc{\grad f}(u) \|_{\infty}=\Lambda |f(u)|=\Lambda \| f \|_{\infty}$, i.e. 
  \begin{equation*}
    \Lambda=\frac{\| \grad f \|_{\infty}}{\| f \|_{\infty}}\,.
  \end{equation*}
\end{proposition}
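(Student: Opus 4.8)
The plan is to show that at a maximal node $u \in \nodemaxset(f)$, the defining equation \eqref{limitin_inf_eigenvalue_eq} forces $\|\Opc{\grad f}(u)\|_\infty = \Lambda |f(u)|$, and then to argue that this already yields the global identity $\|\grad f\|_\infty / \|f\|_\infty = \Lambda$. Without loss of generality assume $f(u) > 0$ (the case $f(u) < 0$ is symmetric, replacing $f$ by $-f$; note $u \in \nodemaxset(f)$ cannot have $f(u)=0$ unless $f \equiv 0$, which is excluded since $f$ is nonconstant and, being a limit of $p$-eigenfunctions normalized in some norm, is nonzero — more carefully, if $f(u)=0$ at a maximal node then $\|f\|_\infty = 0$ so $f\equiv 0$). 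So $f(u) = \|f\|_\infty > 0$.

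First I would show the local gradient at $u$ is ``all non-positive'', i.e.\ $\Opc{\grad f}^+(u) = 0$. Since $f(u) = \max_w f(w)$, for every neighbor $v \sim u$ we have $f(v) \le f(u)$, hence $\grad f(u,v) = \edgelength_{uv}(f(v) - f(u)) \le 0$. Therefore $\Opc{\grad f}^+(u) = \{0\}$ and $\|\Opc{\grad f}(u)\|_\infty = \|\Opc{\grad f}^-(u)\|_\infty$, while $\|\Opc{\grad f}^+(u)\|_\infty = 0$. Consequently $\inflap f(u) = \|\Opc{\grad f}^-(u)\|_\infty - \|\Opc{\grad f}^+(u)\|_\infty = \|\Opc{\grad f}(u)\|_\infty \ge 0$. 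Now invoke the first branch of \eqref{limitin_inf_eigenvalue_eq}: since $f(u)>0$,
\[
  0 = \min\bigl\{\|\Opc{\grad f}(u)\|_\infty - \Lambda f(u),\ \inflap f(u)\bigr\}.
\]
Because $\inflap f(u) = \|\Opc{\grad f}(u)\|_\infty \ge 0$, the minimum can be zero only if $\|\Opc{\grad f}(u)\|_\infty - \Lambda f(u) = 0$ (if $\inflap f(u) > 0$ then the first argument must vanish; if $\inflap f(u)=0$ then $\|\Opc{\grad f}(u)\|_\infty = 0$, and then $\Lambda f(u) \le 0$ forces $\Lambda \le 0$, but $\Lambda \ge 0$ as a limit of nonnegative quantities $\lambda_{p_j}^{1/p_j}$, so $\Lambda f(u) = 0 = \|\Opc{\grad f}(u)\|_\infty$ and the identity still holds — here I would also note $f$ nonconstant excludes a genuinely degenerate situation, or handle it by observing $\|\grad f\|_\infty > 0$). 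Either way, $\|\Opc{\grad f}(u)\|_\infty = \Lambda f(u) = \Lambda \|f\|_\infty$.

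It remains to upgrade ``$\|\Opc{\grad f}(u)\|_\infty = \Lambda\|f\|_\infty$ at a maximal node'' to the global statement $\|\grad f\|_\infty = \Lambda \|f\|_\infty$. Clearly $\|\Opc{\grad f}(u)\|_\infty \le \|\grad f\|_\infty$, so it suffices to prove the reverse inequality $\|\grad f\|_\infty \le \Lambda\|f\|_\infty$. For any edge $(a,b) \in \edgemaxset(f)$ we have $\|\grad f\|_\infty = \edgelength_{ab}|f(b)-f(a)| \le \edgelength_{ab}(|f(a)| + |f(b)|)$; this crude bound is not quite enough, so instead I would argue as follows. Take any node $w$ and run down a path of steepest descent toward where $f$ decreases; more directly, use \eqref{limitin_inf_eigenvalue_eq} at a node where $f$ is \emph{positive and nonmaximal}: there the equation still gives $\|\Opc{\grad f}(w)\|_\infty \ge \Lambda f(w)$ from the $\min$ being zero (the first argument is $\ge 0$), together with $\inflap f(w) \ge 0$, i.e.\ $\|\Opc{\grad f}^-(w)\|_\infty \ge \|\Opc{\grad f}^+(w)\|_\infty$. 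Chaining these inequalities along a path from any edge realizing $\|\grad f\|_\infty$ up to a maximal node, combined with the fact established above that $\Lambda \|f\|_\infty$ equals the local gradient norm at the top, gives $\|\grad f\|_\infty \le \Lambda \|f\|_\infty$. The cleanest execution: pick $(a,b)$ with $f(a) \le f(b)$ realizing the max gradient; if $f(b) \le 0$ then all values are $\le 0$ and one runs the symmetric argument; otherwise $f(b) > 0$ and from the minimum at $b$ being zero one gets $\|\grad f\|_\infty = \edgelength_{ab}(f(b)-f(a)) \le \|\Opc{\grad f}^-(b)\|_\infty$ only if... — so actually the sharp step is that $\|\Opc{\grad f}^-(b)\|_\infty$ measures the steepest \emph{descent} out of $b$, not the edge $(a,b)$, and one needs $\Lambda f(b) \ge \|\grad f\|_\infty$ coming from below.

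The main obstacle, then, is exactly this last upgrade: the equation \eqref{limitin_inf_eigenvalue_eq} controls the \emph{outgoing descending} gradient $\|\Opc{\grad f}^-(u)\|_\infty$ at each node, whereas $\|\grad f\|_\infty$ is attained on some particular edge, and one must transfer the pointwise bound along edges. I expect the right tool is a comparison/propagation argument: starting from a node realizing $\|f\|_\infty$ and using $\inflap f(u) \ge 0$ (equivalently $\|\Opc{\grad f}^-(u)\|_\infty \ge \|\Opc{\grad f}^+(u)\|_\infty$) at every positive node, one shows the ``incline'' $\Lambda f$ dominates the gradient everywhere, which is precisely the discrete analogue of the cone/eikonal comparison $|f| \le \|f\|_\infty$ and $f$ being $\Lambda\|f\|_\infty$-Lipschitz. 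I would carry this out by induction on the distance from $\nodemaxset(f)$, using at each step the inequality $\|\Opc{\grad f}(w)\|_\infty \ge \Lambda f(w)$ (valid wherever $f(w) > 0$) together with the eikonal-type bound $f(w) \ge f(v) - \|\grad f\|_\infty/\edgelength_{wv}$ — but since the paper only asserts the \emph{equality} $\Lambda = \|\grad f\|_\infty/\|f\|_\infty$, the actual proof may well be shorter, leveraging that $\|\grad f\|_\infty$ is itself realized and that $\Lambda f(u) = \|\Opc{\grad f}(u)\|_\infty$ at the maximum already pins down $\Lambda$ once one checks $\|\grad f\|_\infty = \|\Opc{\grad f}(u)\|_\infty$ for some maximal $u$, i.e.\ that the steepest edge in the whole graph emanates (as a descent) from a maximal node. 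Establishing that geometric fact — the maximal-gradient edge can be taken to touch the maximal-value set — is where I would concentrate the effort.
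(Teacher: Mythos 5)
Your first half is correct and coincides with the paper's: at a node $u\in\nodemaxset(f)$ with $f(u)>0$ every outgoing gradient is nonpositive, so $\|\Opc{\grad f}^{+}(u)\|_{\infty}=0$, hence $\inflap f(u)=\|\Opc{\grad f}(u)\|_{\infty}\geq 0$, and the $\min$ in \eqref{limitin_inf_eigenvalue_eq} forces $\|\Opc{\grad f}(u)\|_{\infty}=\Lambda f(u)=\Lambda\|f\|_{\infty}$. The problem is the second half. You correctly diagnose that the whole content of the proposition is the upgrade from this local identity to $\|\Opc{\grad f}(u)\|_{\infty}=\|\grad f\|_{\infty}$, i.e.\ the geometric fact that an edge realizing $\|\grad f\|_{\infty}$ can be connected to the maximal-value set, but you never establish it: your first chaining attempt breaks off mid-sentence (you yourself note that the equation controls the steepest \emph{descent} out of a node, not the particular incoming edge), the ``induction on the distance from $\nodemaxset(f)$'' is only announced, and you close by saying this is ``where I would concentrate the effort.'' As it stands the proposal is an incomplete proof: the statement could fail exactly if the globally steepest edge lived far from $\nodemaxset(f)$, and nothing in your text rules that out.

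The paper closes the gap with a short ascent argument that is worth recording, since it is the piece you were looking for. Let $(v_0,u_0)$ be an edge with $|\grad f(v_0,u_0)|=\|\grad f\|_{\infty}$, so that $\|\Opc{\grad f}(u_0)\|_{\infty}=\|\grad f\|_{\infty}$. If $\inflap f(u_0)\neq 0$, the relevant branch of \eqref{limitin_inf_eigenvalue_eq} forces the other term to vanish, i.e.\ $\|\Opc{\grad f}(u_0)\|_{\infty}=\Lambda|f(u_0)|$ (and $f(u_0)\neq 0$, since $f(u_0)=0$ would give $\inflap f(u_0)=0$); then $\|\grad f\|_{\infty}=\Lambda|f(u_0)|\leq\Lambda\|f\|_{\infty}=\|\Opc{\grad f}(u)\|_{\infty}\leq\|\grad f\|_{\infty}$, so equality holds throughout and $u_0$ is itself a maximal node carrying the global maximal gradient. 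If instead $\inflap f(u_0)=\inflap f(v_0)=0$, say with $f(u_0)>f(v_0)$, then the descending part of the local gradient at $u_0$ already attains $\|\grad f\|_{\infty}$ through the edge to $v_0$, so $\|\Opc{\grad f}^{+}(u_0)\|_{\infty}=\|\Opc{\grad f}^{-}(u_0)\|_{\infty}=\|\grad f\|_{\infty}$ yields a neighbor $u_1$ with $f(u_1)>f(u_0)$ and $|\grad f(u_0,u_1)|=\|\grad f\|_{\infty}$; iterating, the values strictly increase, so by finiteness of the graph the process stops at a node where $\inflap f\neq 0$, which falls into the first case. Without this (or an equivalent) argument, the proposition is not proved.
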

\begin{proof}
  Assume $u\in\internalnodes$ to be a node such that $|f(u)|=\| f \|_{\infty}$, i.e., $u\in\nodemaxset(f)$, and, without loss of generality, that $f(u)>0$. Then~\eqref{limitin_inf_eigenvalue_eq} ensures that:  
  \begin{equation*}
    \|\Opc{\grad f}(u)\|_{\infty}\geq \Lambda f(u) >0\,.
  \end{equation*}
  Since $u$ is a maximum point, we have that $\|\Opc{\grad f}^{+}(u)\|_{\infty}=0$, which, together with the previous inequality, yields:
  \begin{equation*}
    \inflap f(u)>0 \,.
  \end{equation*}
  Thus, from \eqref{limitin_inf_eigenvalue_eq} we necessarily have:
  \begin{equation*}
    \|\Opc{\grad f}(u)\|_{\infty}-\Lambda f(u)=0\,.
  \end{equation*}
  The last equality allows us to observe that, if the thesis holds for some $w\in \nodemaxset(f)$, it holds as well for all the other $u\in\nodemaxset(f)$. Indeed
  \begin{equation*}
    \frac{\|\Opc{\grad f}(u)\|_{\infty}}{|f(u)|}=\frac{\|\Opc{\grad f}(u)\|_{\infty}}{\|f\|_{\infty}}=\Lambda=\frac{\|\Opc{\grad f}(w)\|_{\infty}}{|f(w)|}=\frac{\|\grad f\|_{\infty}}{\|f\|_{\infty}}\,.
  \end{equation*}
  To prove the existence of such a node $w$, we proceed as follows. Let $u_0\in\internalnodes$ be a node admitting an edge $(v_0,u_0)$ with $|\grad  f(v_0, u_0)|=\| \grad f \|_{\infty}=\|\Opc{\grad  f}(u_0)\|_{\infty}=\|\Opc{\grad  f}(v_0)\|_{\infty}$. Then, if $\inflap f(u_0)\neq 0$, we can write:
  \begin{equation*}
    \Lambda=\frac{\| \Opc{\grad f}(u_0) \|_{\infty}}{|f(u_0)|}=\frac{\| \grad f \|_{\infty}}{|f(u_0)|}\geq \frac{\| \Opc{\grad f}(u) \|_{\infty}}{|f(u)|}=\frac{\| \Opc{\grad f}(u) \|_{\infty}}{\| f \|_{\infty}}=\Lambda\,,
  \end{equation*}
  where $u$ is any node satisfying $|f(u)|=\|f\|_{\infty}$, yielding in particular $|f(u_0)|=\| f \|_{\infty}$ and thus the thesis.
  On the other hand, if $\inflap f(u_0)=\inflap f(v_0)=0$, assuming without loss of generality $f(u_0)>f(v_0)$ (indeed the role of $u_0$ and $v_0$ is interchangeable), there exists $u_1\sim u_0$ such that $f(u_1)>f(u_0)$ with
  \begin{equation*}
    \begin{aligned}
      \edgelength_{u_0 u_1}\big(f(u_1)-f(u_0)\big)&=\|\Opc{\grad f}^-(u_0)\|_{\infty}\\
                                                  &=\|\Opc{\grad f}^+(u_0)\|_{\infty}=\edgelength_{u_0,v_0}\big(f(u_0)-f(v_0)\big)=\| \grad f \|_{\infty}\,.
    \end{aligned}
  \end{equation*}
  Thus there exists another edge $(u_0,u_1)$ such that $|\grad f (u_0,u_1)|=\| \grad f \|_{\infty}$ and $f(u_1)>f(u_0)>f(v_0)$. Iterating this procedure, by the finiteness of the graph and the previous argument, there must exist a node $u_k$ such that:
  \begin{equation*}
    \| \Opc{\grad f}(u_k) \|_{\infty}=\| \grad f \|_{\infty}=\Lambda |f(u_k)|=\Lambda \| f \|_{\infty}\,.
  \end{equation*}
\end{proof}


\subsection{Topological Properties of the $\infty$-eigenpairs}
This subsection studies some relationships between the solutions of $\eqref{limitin_inf_eigenvalue_eq}$, the limits of $p$-Laplacian eigenpairs, and the topology of the graph.
In particular, we aim at recovering a discrete version of the relationships holding in the continuous setting between the limit variational eigenvalues and the packing radii of the domain. 
Within this framework, the distance function introduced in~\eqref{distance_function} and its boundary counterpart in~\eqref{boundary_distance} become significant tools in the study of the $\infty$-eigenpairs interpreted as solutions of~\eqref{limitin_inf_eigenvalue_eq}.   
\begin{proposition}\label{optimal_paths_limiting_eigenfunctions}
  Let $(f,\Lambda)$, be a pair satisfying~\eqref{limitin_inf_eigenvalue_eq} with $f$ not a constant function. Then for any $u\in\nodemaxset(f)$ there exists a path $\Gamma=\{(u_i,u_{i+1})\}_{i=1}^{n-1}$ starting from $u$ ($u_1=u$) such that :
  \begin{enumerate}
  \item $f$ is monotone along $\Gamma$ and  for all $1\le k\le n-1$, $\grad f(u_k,u_{k+1})=\|\grad f\|_{\infty}$;\\[-.8ex]
  \item\label{enum:cases}
    either
    $
      \begin{cases}
        u_n\in\boundary \\
        \displaystyle{\Lambda=\frac{1}{\length(\Gamma)}}
      \end{cases} 
    $
    or 
    $
      \begin{cases}
        f(u_n)=-f(u) \\
        \displaystyle{\Lambda=\frac{2}{\length(\Gamma)}}
      \end{cases}$;\\[.9ex]
  \item $\displaystyle{\Lambda=\min\bigg\{\frac{1}{d_{\boundary}(u)}, \min_{\{v\,|\,f(v)=-f(u)\}}\frac{2}{d(u,v)} \bigg\}}\,.$
  \end{enumerate}
\end{proposition}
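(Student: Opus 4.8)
The plan is to build the path $\Gamma$ greedily, following edges of maximal gradient, and to use \Cref{Prop_maximal_points} together with the structure of \eqref{limitin_inf_eigenvalue_eq} to control where the path terminates. First I would start at $u_1=u\in\nodemaxset(f)$, assuming WLOG $f(u)>0$. By \Cref{Prop_maximal_points} we have $\|\Opc{\grad f}(u)\|_{\infty}=\Lambda f(u)=\|\grad f\|_{\infty}$, so there is a neighbour $u_2\sim u$ with $\grad f(u_1,u_2)=\edgelength_{u_1u_2}(f(u_2)-f(u_1))=\|\grad f\|_{\infty}>0$; in particular $f(u_2)>f(u_1)$ — wait, this forces $f(u_2)>f(u)=\|f\|_\infty$, a contradiction, so in fact the maximal-modulus edge at $u$ must be \emph{incoming}, i.e.\ there is $u_2\sim u$ with $f(u)-f(u_2)$ realizing $\|\Opc{\grad f}^+(u)\|_\infty$. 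I would therefore orient $\Gamma$ so that $f$ is \emph{decreasing} along it: set $\grad f(u_{k},u_{k+1})=\|\grad f\|_{\infty}$ with the convention that this means $f$ drops by $\|\grad f\|_\infty/\edgelength_{u_ku_{k+1}}$ at each step (reconciling the two sign conventions in the statement is a notational matter I would handle by fixing one orientation at the outset).

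Next comes the iteration. At a node $u_k$ with $f(u_k)>0$ that is \emph{internal}, I claim one can always extend the path by one more maximal edge. Indeed, by hypothesis $u_k$ carries an incoming maximal-modulus edge, so $\|\Opc{\grad f}^+(u_k)\|_{\infty}=\|\grad f\|_{\infty}$; if also $\|\Opc{\grad f}^-(u_k)\|_{\infty}=\|\grad f\|_\infty$ we pick the outgoing maximal edge and continue; if $\|\Opc{\grad f}^-(u_k)\|_{\infty}<\|\grad f\|_\infty$ then $\inflap f(u_k)=\|\Opc{\grad f}^-(u_k)\|_\infty-\|\Opc{\grad f}^+(u_k)\|_\infty<0$, which by \eqref{limitin_inf_eigenvalue_eq} (case $f(u_k)>0$, where the $\min$ must vanish) is impossible — the argument here mirrors the one inside the proof of \Cref{Prop_maximal_points}. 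So the path continues as long as $f>0$ and we stay internal, and each edge contributes the same gradient value $\|\grad f\|_\infty=\Lambda\|f\|_\infty$. Since the graph is finite and $f$ is strictly monotone (decreasing) along $\Gamma$, no node repeats, so the path must terminate, and it can only terminate for one of two reasons: (i) it reaches a boundary node $u_n\in\boundary$, or (ii) it reaches an internal node $u_n$ with $f(u_n)\le 0$. In case (ii), I would continue the \emph{same} greedy construction through the sign-zero and $f<0$ regimes — the $f(u)=0$ case of \eqref{limitin_inf_eigenvalue_eq} gives $\|\Opc{\grad f}^-(u)\|_\infty=\|\Opc{\grad f}^+(u)\|_\infty$ so the maximal edge propagates, and the $f<0$ case is symmetric to the $f>0$ case — until it reaches a node $u_n$ with $f(u_n)=-\|f\|_\infty=-f(u)$, which it must (the first time $f$ becomes $\le -f(u)$, by the strict step-by-step monotonicity and the fact that once $f<0$ the same argument forces $u_n\in\nodemaxset$, hence $|f(u_n)|=\|f\|_\infty$; I'd argue the path can't hit the boundary first in this branch, or if it can, that falls into case (i)).

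For the quantitative part: along $\Gamma$, telescoping the equalities $f(u_k)-f(u_{k+1})=\|\grad f\|_\infty/\edgelength_{u_ku_{k+1}}$ gives $f(u_1)-f(u_n)=\|\grad f\|_\infty\sum_{k=1}^{n-1}\edgelength_{u_ku_{k+1}}^{-1}=\|\grad f\|_\infty\,\length(\Gamma)$. In case (i), $f(u_n)=0$ and $f(u_1)=\|f\|_\infty$, so $\|f\|_\infty=\|\grad f\|_\infty\length(\Gamma)$, i.e.\ $\Lambda=\|\grad f\|_\infty/\|f\|_\infty=1/\length(\Gamma)$. In case (ii), $f(u_n)=-f(u)=-\|f\|_\infty$, so $2\|f\|_\infty=\|\grad f\|_\infty\length(\Gamma)$, giving $\Lambda=2/\length(\Gamma)$. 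This proves items 1 and 2. For item 3, note first that $\length(\Gamma)\ge d_{\boundary}(u)$ in case (i) and $\length(\Gamma)\ge d(u,u_n)\ge \min_{\{v:f(v)=-f(u)\}}d(u,v)$ in case (ii), so in either case $\Lambda\le\min\{1/d_{\boundary}(u),\ \min_{\{v:f(v)=-f(u)\}}2/d(u,v)\}$. The reverse inequality is the real content: given \emph{any} node $w$ with $f(w)=-f(u)$ and \emph{any} path $\Gamma'$ from $u$ to $w$ (resp.\ any path from $u$ to the boundary), I would bound $|f(u)-f(w)|\le\sum_{\text{edges of }\Gamma'}|\grad f|\,\edgelength^{-1}\le\|\grad f\|_\infty\length(\Gamma')$, hence $2\|f\|_\infty\le\|\grad f\|_\infty\length(\Gamma')$ for all such $\Gamma'$, so $\Lambda=\|\grad f\|_\infty/\|f\|_\infty\le 2/d(u,w)$; taking the min over $w$ and combining with the analogous boundary estimate $\|f\|_\infty = |f(u)-0| = |f(u)-f(b)| \le \|\grad f\|_\infty d_{\boundary}(u)$ for any $b\in\boundary$ on a shortest path (using $f(b)=0$ since $b\in\boundary$ and $f\in\Hc_0$) gives $\Lambda\le 1/d_{\boundary}(u)$; together with the existence of the extremal path $\Gamma$ from items 1--2 realizing one of these two bounds with equality, item 3 follows.

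\textbf{Main obstacle.} The delicate point is the bookkeeping in the iteration when $f$ changes sign: showing that the greedy maximal-edge path, once it leaves the region $\{f>0\}$, genuinely continues (the $f=0$ case of the equation must be invoked to cross zero-nodes) and that it terminates at a node of maximal modulus of the \emph{opposite} sign rather than wandering — i.e.\ that case (ii) really ends at $\{f=-f(u)\}$ and not somewhere with $-f(u)<f(u_n)<0$. This requires re-running the $\inflap f(u_n)<0$ contradiction argument in the $f<0$ regime and carefully handling the possibility that the path could reach the boundary during this second phase (which should simply be absorbed into case (i)). Everything else is telescoping and the triangle-inequality bound $|f(x)-f(y)|\le\|\grad f\|_\infty d(x,y)$, which is immediate from the definition of $d$.
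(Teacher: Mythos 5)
Your construction of $\Gamma$ and your proof of items 1--2 follow essentially the same route as the paper: start at $u\in\nodemaxset(f)$, use Proposition~\ref{Prop_maximal_points} to produce a first edge of maximal gradient (necessarily pointing downhill from $u$, as you note), then use the structure of \eqref{limitin_inf_eigenvalue_eq} at each subsequent node --- when $f(u_k)>0$ and $|f(u_k)|<\NORM[\infty]{f}$ the first argument of the $\min$ is strictly positive, so $\inflap f(u_k)=0$ and the maximal descending edge propagates; the $f=0$ and $f<0$ regimes are handled symmetrically --- and conclude by finiteness of the graph plus telescoping. The ``main obstacle'' you flag (crossing the sign change and terminating only at a node with $f(u_n)=-f(u)$ or at the boundary) is resolved in the paper exactly as you propose; note also that once the strictly decreasing path enters $\{f<0\}$ it can no longer meet a boundary node, since those carry the value $0$, so the case split is clean. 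This part of the proposal is sound.

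Item 3 is where your write-up goes wrong, in the directions of the inequalities. The bound $|f(x)-f(y)|\le\NORM[\infty]{\grad f}\,d(x,y)$ applied to $x=u$ and $y=w$ with $f(w)=-f(u)$ gives $2\NORM[\infty]{f}\le\NORM[\infty]{\grad f}\,d(u,w)$, i.e.\ $\Lambda\ge 2/d(u,w)$ --- a \emph{lower} bound on $\Lambda$, not the upper bound $\Lambda\le 2/d(u,w)$ you write; likewise the boundary estimate gives $\Lambda\ge 1/d_{\boundary}(u)$, not $\Lambda\le 1/d_{\boundary}(u)$. Conversely, the path from items 1--2 supplies the matching \emph{upper} bound on exactly one of the two quantities ($\Lambda=1/\length(\Gamma)\le 1/d_{\boundary}(u)$ in the boundary case, $\Lambda=2/\length(\Gamma)\le 2/d(u,u_n)$ in the sign-change case); your assertion that the path alone already yields ``$\Lambda\le\min$'' of both quantities does not follow. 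Assembled with the correct signs, the two halves give $1/\Lambda=\min\big\{d_{\boundary}(u),\min_{\{v\,|\,f(v)=-f(u)\}}d(u,v)/2\big\}$, which is how the analogous statement in Proposition~\ref{Infinity_eigenfunctions_characterization} is phrased and what the paper's own contradiction argument (a putative shorter path $\Gamma'$ violating the Lipschitz bound --- your triangle inequality in contrapositive form) actually establishes; equivalently $\Lambda=\max\{1/d_{\boundary}(u),\max_v 2/d(u,v)\}$, so the displayed formula in item 3 must be read as the reciprocal of a minimum rather than a minimum of reciprocals. Your ingredients are the right ones and match the paper's, but as written the inequalities in this step would not assemble into a correct proof.
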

\begin{proof}
  The first two points follow from Proposition~\ref{Prop_maximal_points} and eq.~\eqref{limitin_inf_eigenvalue_eq}.
  Indeed, given $u_1=u$, by Proposition~\ref{Prop_maximal_points} there exists $u_2\sim u_1$ such that $|\grad f(u_1,u_2)|=\|\grad f\|_{\infty}$.
  Then, from \eqref{limitin_inf_eigenvalue_eq} we conclude that one of the three following conditions must hold:
  \begin{equation*}
    f(u_2)=-f(u_1)  \qquad\mbox{or}\qquad
    u_2\in\boundary \qquad\mbox{or}\qquad
    |f(u_2)|<|f(u_1)|\,.
  \end{equation*}
  In the last case, since $\|\grad f(u_2)\|_{\infty}=\|\grad f\|_{\infty}$, necessarily $\inflap f(u_2)=0$, i.e. there exists $u_3 \sim u_2$ such that $|\grad f(u_2,u_3)|=\|\grad f\|_{\infty}$ and, assuming w.l.o.g. $f(u_1)>f(u_2)$, it follows $f(u_2)>f(u_3)$. Iterating this procedure, the finiteness of the graph ensures that a path $\Gamma$ such that $u_n\in\boundary$ or $f(u_n)=-f(u_1)$ exists.
  The following equalities
  \begin{equation*}
    \frac{f(u_i)-f(u_{i+1})}{\|\grad f\|_{\infty}}=\frac{1}{\edgelength_{u_i,u_{i+1}}}\,\:\forall i=1,\dots,n-1
  \end{equation*}
  provide an explicit expression for the length of the path $\Gamma$:
  \begin{equation}\label{eq1_optimal_paths_limiting_eigenfunctions}
    \length(\Gamma)=
    \sum_i \frac{1}{\edgelength_{u_i,u_{i+1}}}=
    \frac{f(u_1)-f(u_n)}{\|\grad f\|_{\infty}}\,.
  \end{equation}
  This last equation, together with the equality $\Lambda=\|\grad f\|_{\infty}/|f(u_1)|$, yields $\Lambda=1/\length(\Gamma)$ if $u_n\in\boundary$ and $\Lambda=2/\length(\Gamma)$ if $f(u_n)=-f(u_1)$\,.

  To conclude we are left to prove that i) if $u_n\in\boundary$ then  $\length(\Gamma)=d_{\boundary}(u_1)$ and ii) $\length(\Gamma)\leq 2d(u_1,v)$ for all $v$ such that $f(v)=-f(u_1)$, together with the analogous expressions for the case $f(u_n)=-f(u_1)$.
  To this aim, assume by contradiction that there exists a path $\Gamma'=\{(v_i,v_{i+1})\}_{i=1}^{m-1}$ with $v_1=u_1$, $v_m\in\boundary$, and 
  $\sum_{i=1}^{m-1}\edgelength_{v_i v_{i+1}}^{-1}<\sum_{i=1}^{n-1}\edgelength_{u_i u_{i+1}}^{-1}$ and observe that
  \begin{equation*}
    \edgelength_{v_iv_{i+1}}|f(v_i)-f(v_{i+1})|=
    |\grad f(v_i,v_{i+1})|\leq\|\grad f\|_{\infty},
    \quad \forall i=1,\dots,m-1\,,
  \end{equation*}
  Then, using the triangular inequality and then equality~\eqref{eq1_optimal_paths_limiting_eigenfunctions},
  and the assumptions $u_n, v_m\in\boundary$ (i.e. $f(u_n)=f(u_m)=0$), we obtain the following contradiction:
  \begin{equation*}
  \begin{aligned}
    |f(u_1)|\leq\sum_{i=1}^{m-1}|f(v_i)-f(v_{i+1})|
    &\leq\|\grad f\|_{\infty}
    \sum_{i=1}^{m-1}\edgelength_{v_i v_{i+1}}^{-1}\\
    &<\|\grad f\|_{\infty}\sum_{i=1}^{n-1}\edgelength_{u_i u_{i+1}}^{-1}
    =|f(u_1)|\,.
  \end{aligned}
  \end{equation*}
  The inequality $\length(\Gamma)\leq 2\,d(u_1,v)$ for all $v$ such that $f(v)=-f(u_1)$, can be proved with a similar argument, i.e., assuming the existence of a path $\Gamma'$, from $u_1$ to $v$, with $f(v)=-f(u_1)$, such that $\length(\Gamma')<2\,\length(\Gamma)$. The final case $f(u_n)=-f(u_1)$ can be treated analogously.
\end{proof}

We now turn our attention to the study of the accumulation points of sequences of $p$-Laplacian variational eigenvalues. We start by introducing some necessary preliminary results that are of independent interest. First of all we observe that the minimum of the $\infty$-Rayleigh quotient
\begin{equation}\label{minimizing_inf_rayleigh_quotient_problem}
  \rayl_{\infty}(f)=\frac{\| \grad f\|_{\infty}}{\| f \|_{\infty}}
\end{equation}
can be easily computed, as in the continuous case~\cite{Lind3,BungertInfinityL2}, considering the distance from the boundary, $d_{\boundary}$ given in~\eqref{boundary_distance}.
Before proving this result, let us consider the following trivial inequality. Assume the distance between the nodes $u$ and $v$ is realized along the path $\Gamma=\{(v_i,v_{i+1})\}_{i=1}^{n-1}$, with $v_1=u$ and $v_n=v$, i.e.,  $d(u,v)=\sum_{i=1}^m 1/\edgelength_{v_iv_{i+1}}\,.$ 
Then, the trianguar inequality yields the following Lipschitz condition:
\begin{equation}\label{Lipschitzianity}
  \begin{aligned}
    |f(u)-f(v)|
    &
      \leq |\sum_{i=1}^m |f(v_i)-f(v_{i+1})|
      \leq \|\grad f\|_{\infty}\sum_{i=1}^m\frac{1}{\edgelength_{v_iv_{i+1}}}
      =\|\grad f\|_{\infty}d(u,v)\,.
  \end{aligned}
\end{equation}
The next result shows that the boundary distance function $d_{\boundary}$ defined in~\eqref{boundary_distance} is a minimizer of the Rayleigh quotient.
\begin{proposition}\label{Prop_boundary_distance_is_a_minimizer}
  The boundary distance function $d_B\in\nodeset$ realizes the minimunm of the $\infty$-Rayleigh quotient:
  \begin{equation*}
    d_{B}\in
    \argmin_{f\in\mathcal{H}_0(\nodeset)}
    \frac{\|\grad f\|_{\infty}}{\| f \|_{\infty}}\,,
  \end{equation*}
\end{proposition}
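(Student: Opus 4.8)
The goal is to show that $d_{\boundary}$ minimizes $\rayl_{\infty}(f) = \|\grad f\|_{\infty}/\|f\|_{\infty}$ over $\Hc_0(\nodeset)$. The plan is to establish a matching upper and lower bound: first compute $\rayl_{\infty}(d_{\boundary})$ explicitly and show it equals $1/\|d_{\boundary}\|_{\infty}$, and then show that no admissible $f$ can do better, i.e. $\rayl_{\infty}(f) \ge 1/\|d_{\boundary}\|_{\infty}$ for every nonzero $f \in \Hc_0(\nodeset)$.

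\textbf{Step 1: the value at $d_{\boundary}$.} I would first observe that for any edge $(u,v)$, the triangle inequality for the graph distance gives $|d_{\boundary}(u) - d_{\boundary}(v)| \le 1/\edgelength_{uv}$, hence $|\grad d_{\boundary}(u,v)| = \edgelength_{uv}|d_{\boundary}(u)-d_{\boundary}(v)| \le 1$, so $\|\grad d_{\boundary}\|_{\infty} \le 1$. Conversely, pick a node $u^*$ realizing $\|d_{\boundary}\|_{\infty}$ and walk one step of a shortest path from $u^*$ toward $\boundary$: the first edge $(u^*, w)$ on such a path satisfies $d_{\boundary}(w) = d_{\boundary}(u^*) - 1/\edgelength_{u^*w}$, so $|\grad d_{\boundary}(u^*,w)| = 1$; thus $\|\grad d_{\boundary}\|_{\infty} = 1$ and $\rayl_{\infty}(d_{\boundary}) = 1/\|d_{\boundary}\|_{\infty}$. (One must note $d_{\boundary} \not\equiv 0$, which holds because the graph is connected with nonempty interior, or treat the constant-zero case trivially.)

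\textbf{Step 2: the lower bound.} Take any $f \in \Hc_0(\nodeset)$ with $f \not\equiv 0$, and let $u^*$ be a node with $|f(u^*)| = \|f\|_{\infty}$. Since $f$ vanishes on $\boundary$, pick any boundary node $v^*$ with $d(u^*, v^*) = d_{\boundary}(u^*)$. Applying the Lipschitz estimate~\eqref{Lipschitzianity} along a shortest path from $u^*$ to $v^*$,
\begin{equation*}
  \|f\|_{\infty} = |f(u^*)| = |f(u^*) - f(v^*)| \le \|\grad f\|_{\infty}\, d(u^*, v^*) = \|\grad f\|_{\infty}\, d_{\boundary}(u^*) \le \|\grad f\|_{\infty}\, \|d_{\boundary}\|_{\infty}\,,
\end{equation*}
which rearranges to $\rayl_{\infty}(f) = \|\grad f\|_{\infty}/\|f\|_{\infty} \ge 1/\|d_{\boundary}\|_{\infty} = \rayl_{\infty}(d_{\boundary})$. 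Combining Steps 1 and 2 gives $d_{\boundary} \in \argmin_{f \in \Hc_0(\nodeset)} \rayl_{\infty}(f)$.

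\textbf{Main obstacle.} The argument is essentially a packaging of the Lipschitz inequality~\eqref{Lipschitzianity} already available in the excerpt, so there is no deep obstacle; the only points requiring care are the edge cases (ensuring $d_{\boundary}$ and $f$ are not identically zero so the Rayleigh quotient is well-defined, which uses connectedness of the interior and $\boundary \ne \emptyset$) and verifying that the distance $d_{\boundary}(u^*)$ is genuinely attained at a boundary node reachable by a shortest path on which $d_{\boundary}$ decreases by exactly the edge length at each step — a standard fact about shortest-path distances on finite graphs.
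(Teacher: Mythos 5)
Your proposal is correct and follows essentially the same route as the paper's proof: both establish $\|\grad d_{\boundary}\|_{\infty}\le 1$ via the triangle inequality and then apply the Lipschitz estimate~\eqref{Lipschitzianity} along a shortest path from a maximizing node to its nearest boundary node to get $\|f\|_{\infty}\le\|\grad f\|_{\infty}\,\|d_{\boundary}\|_{\infty}$ (the paper phrases this as $d_{\boundary}$ maximizing $\|f\|_{\infty}$ subject to $\|\grad f\|_{\infty}\le 1$, which is equivalent by homogeneity). Your Step 1 computation of the exact equality $\|\grad d_{\boundary}\|_{\infty}=1$ is not needed for the argmin claim and is deferred by the paper to Corollary~\ref{First_infinity_eigenvalue}, but it is correct and harmless.
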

\begin{proof}
  First we prove that $\| \grad d_{\boundary} \|_{\infty}\leq 1$. Indeed, given a node $u\in\nodeset$ and an edge $(u,v)\in\edgeset$ connected to it, the gradient of the distance function $d_{\boundary}$ on this edge can be estimated as:
  \begin{equation*}
    \grad d_{\boundary}(u,v)
    = \edgelength_{uv}\big(d_{\boundary}(v)-d_{\boundary}(u)\big)
    \leq \edgelength_{uv}(\edgelength_{uv})^{-1}=1\,.
  \end{equation*}
  Thus, to conlude it is sufficient to show that the boundary distance function satisfies:
  \begin{equation*}
    d_{\boundary}\in\argmax_{f\in\mathcal{F}:\| \grad f\|_{\infty}\leq 1}\|f\|_{\infty}\,.
  \end{equation*}
  To this end, for any function $f$ with $f\in\mathcal{F}$ and $\|\grad f\|_{\infty}\leq 1$ and for any node $u\in\nodeset$, let $v\in\boundary$ be the boundary node such that $d(u,v)=d_{\boundary}(u)$. Then, using~\eqref{Lipschitzianity} we can write:
  \begin{equation*}
    |f(u)|=|f(u)-f(v)|\leq d(u,v)=d_{\boundary}(u)\,.
  \end{equation*}
  The last equation clearly implies:
  \begin{equation*}
    d_{\boundary}\in
    \argmax_{f\in\mathcal{H}_0(\nodeset):\|\grad f\:\|_{\infty}\leq 1}\|f\|_{\infty}.
  \end{equation*}
\end{proof}

Next we show that the minimum value of the Rayleigh quotient is equal to the radius of the largest ball that can be inscribed in $\Gc$.
\begin{corollary}\label{First_infinity_eigenvalue}
  \begin{equation*}
    \min_{f\in\mathcal{H}_0(\nodeset)} \frac{\| \grad f\|_{\infty}}{\| f\|_{\infty}}
    =\frac{1}{R_1}\,,
  \end{equation*}
  where $R_1$ is defined as 
    $R_1=\sup_{u\in \internalnodes}\lbrace d_{\boundary}(v) \rbrace$.
\end{corollary}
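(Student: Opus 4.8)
The plan is to reduce the corollary to an exact evaluation of the $\infty$-Rayleigh quotient at the boundary distance function. By Proposition~\ref{Prop_boundary_distance_is_a_minimizer} we already know $d_{\boundary}\in\argmin_{f\in\mathcal{H}_0(\nodeset)}\rayl_{\infty}(f)$, so the minimum in the statement equals $\rayl_{\infty}(d_{\boundary})=\NORM[\infty]{\grad d_{\boundary}}/\NORM[\infty]{d_{\boundary}}$, and it remains only to show this ratio is $1/R_1$. The denominator is immediate: since the graph is finite, $\NORM[\infty]{d_{\boundary}}=\max_{u\in\internalnodes}d_{\boundary}(u)=\sup_{u\in\internalnodes}d_{\boundary}(u)=R_1$, and $R_1>0$ because every internal node has positive distance from the (nonempty) boundary. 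So the whole statement reduces to proving $\NORM[\infty]{\grad d_{\boundary}}=1$.

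For the upper bound $\NORM[\infty]{\grad d_{\boundary}}\le 1$ I would reuse the first estimate in the proof of Proposition~\ref{Prop_boundary_distance_is_a_minimizer}: for any edge $(u,v)$ the triangle inequality gives $d_{\boundary}(v)\le d_{\boundary}(u)+\edgelength_{uv}^{-1}$, hence $\grad d_{\boundary}(u,v)=\edgelength_{uv}(d_{\boundary}(v)-d_{\boundary}(u))\le 1$, and by symmetry $|\grad d_{\boundary}(u,v)|\le 1$.

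For the lower bound I would exhibit an edge on which the gradient has absolute value exactly $1$. Pick $u^{*}\in\internalnodes$ with $d_{\boundary}(u^{*})=R_1$ and a shortest path $u^{*}=v_1\sim v_2\sim\dots\sim v_n\in\boundary$ realizing $d(u^{*},\boundary)$; note $n\ge 2$ since $R_1>0$. The key point is sub-path optimality: each tail $v_k\sim\dots\sim v_n$ is itself a shortest path from $v_k$ to $\boundary$, for otherwise splicing a shorter tail into the original path would contradict the minimality defining $d_{\boundary}(u^{*})$. Consequently $d_{\boundary}(v_k)=\sum_{i=k}^{n-1}\edgelength_{v_iv_{i+1}}^{-1}$, so $d_{\boundary}(v_k)-d_{\boundary}(v_{k+1})=\edgelength_{v_kv_{k+1}}^{-1}$ and therefore $\grad d_{\boundary}(v_{k+1},v_k)=\edgelength_{v_kv_{k+1}}(d_{\boundary}(v_k)-d_{\boundary}(v_{k+1}))=1$. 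This forces $\NORM[\infty]{\grad d_{\boundary}}\ge 1$, hence $=1$, and combining with the denominator computation yields $\rayl_{\infty}(d_{\boundary})=1/R_1$.

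The hard part is the lower bound, specifically the sub-path optimality argument that pins $\NORM[\infty]{\grad d_{\boundary}}$ down to exactly $1$; the upper bound and the denominator are routine bookkeeping about finiteness and connectedness. One could alternatively extract a descending path from $u^{*}$ along which $d_{\boundary}$ drops by the maximal admissible amount at each step (as in the iteration used in Proposition~\ref{optimal_paths_limiting_eigenfunctions}), but the shortest-path description above is the most direct.
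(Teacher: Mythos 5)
Your proposal is correct and follows essentially the same route as the paper: reduce to evaluating $\rayl_{\infty}(d_{\boundary})$ via Proposition~\ref{Prop_boundary_distance_is_a_minimizer}, note $\NORM[\infty]{d_{\boundary}}=R_1$, reuse the bound $|\grad d_{\boundary}(u,v)|\le 1$, and show the gradient attains $1$ along a shortest path to the boundary. The only difference is that you spell out the sub-path-optimality argument for that last step, which the paper delegates to a citation.
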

\begin{proof}
  Because of Proposition~\ref{Prop_boundary_distance_is_a_minimizer}, it is enough to compute $\|\grad d_{\boundary}\|_{\infty}$ and $\| d_{\boundary}\|_{\infty}$.
  Trivially $\|d_{\boundary}\|_{\infty}=R_1$.
  From the proof of Proposition \ref{Prop_boundary_distance_is_a_minimizer}, we already know that $\big|\grad d_{\boundary}(u,v)\big|\leq 1 \;\: \forall (u,v)\in E$. Moreover, it is easy to prove (see e.g. \cite{BungertInfinityL2}) that:
  \begin{equation*}
    |\grad d_{\boundary}(u,v)|= 1
    \quad \Longleftrightarrow \quad
    v\in\Gamma(u,\boundary) \,\: or\, u\in\Gamma(v,\boundary)\,,
  \end{equation*}
  where $\Gamma(u,\boundary)$ is the shortest path from the node $u$ to the boundary. Thus, necessarily $\|\grad d_{\boundary}\|_{\infty}=1$.
\end{proof}

%
The next theorem shows that, given $(f,\Lambda)$ solution to the $\infty$-limit eigenvalue eq.~\eqref{limitin_inf_eigenvalue_eq}, the eigenvalue $\Lambda$ can be regarded as the maximal radius that allows one to inscribe as many balls in the graph as the number of nodal domains induced by $f$.
Recall that the nodal domains induced by $f$ are the maximal connected subgraphs where $f$ is strictly positive or negative.
The proof of the theorem relies on results related to the $p$-Laplacian taken from~\cite{Tudisco1, DEIDDA2023_nod_dom}, more specifically when $p=1$ as specialized in~\cite{ZhangNodalDO}
%
%
\begin{theorem}\label{Thm_Nodal_domains_balls_inscribed}
  Let $(f,\Lambda)$ be an eignepair that satisfies eq.~\eqref{limitin_inf_eigenvalue_eq}, and assume that $f$ induces $k$ nodal domains. Then there exist $v_1,\dots ,v_k\in\nodeset$ and $r_k>0$ such that 
  \begin{equation*}
    d(v_i,v_j)\geq 2r_k \quad 
    \quad d_{\boundary}(v_i)\geq r_k
    \quad \text{ for all } i,j=1,\dots,k,  i\neq j\,.
  \end{equation*}
  Moreover  
  \begin{equation*}
    \Lambda=\frac{1}{r_k}\,.
  \end{equation*}
\end{theorem}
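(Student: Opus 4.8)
The plan is to take the centers to be the nodal extrema of $f$. Let $D_1,\dots,D_k$ be the nodal domains of (the necessarily non‑constant) $f$, set $M_i:=\max_{u\in D_i}|f(u)|$, choose any $v_i\in D_i$ with $|f(v_i)|=M_i$, and put $r_k:=1/\Lambda$ (here $\Lambda>0$ by Proposition~\ref{Prop_maximal_points}). The crucial preliminary fact is a \emph{local gradient bound}: $\NORM[\infty]{\Opc{\grad f}(u)}\le\Lambda M_i$ for every $u\in D_i$. Assuming $f>0$ on $D_i$ (otherwise replace $f$ by $-f$, which solves the same equation), suppose instead that $\NORM[\infty]{\Opc{\grad f}(u)}>\Lambda M_i\ge\Lambda f(u)$ at some $u\in D_i$. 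Then the first entry of the minimum in \eqref{limitin_inf_eigenvalue_eq} is strictly positive, forcing $\inflap f(u)=0$, so $\NORM[\infty]{\Opc{\grad f}^{+}(u)}=\NORM[\infty]{\Opc{\grad f}^{-}(u)}>\Lambda M_i$ and there is a neighbour $u'\in D_i$ with $f(u')>f(u)$ along an edge of gradient $>\Lambda M_i$. The same reasoning applies at $u'$ (now the edge back to $u$ is descending, so $\NORM[\infty]{\Opc{\grad f}^{-}(u')}>\Lambda M_i$), and iterating produces a strictly $f$‑increasing walk inside $D_i$, which by finiteness reaches a vertex $w\in D_i$ with $f(w)=M_i$. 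There $\NORM[\infty]{\Opc{\grad f}^{+}(w)}=0$, hence $\inflap f(w)=\NORM[\infty]{\Opc{\grad f}(w)}>\Lambda M_i=\Lambda f(w)$, so the minimum in \eqref{limitin_inf_eigenvalue_eq} at $w$ is strictly positive — a contradiction.

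\textbf{Distance to the boundary.}
Fix $i$ and $b\in\boundary$, and let $\Gamma$ be a shortest path from $v_i$ to $b$; let $w$ be its first vertex with $f(w)\le 0$ (if $D_i$ is positive; with $f(w)\ge0$ if negative). All vertices of $\Gamma$ before $w$ lie in $D_i$, so by the local bound every edge of the portion of $\Gamma$ from $v_i$ to $w$ has $|\grad f|\le\Lambda M_i$; since $|f(v_i)-f(w)|\ge M_i$, running the telescoping estimate of \eqref{Lipschitzianity} along that portion gives $M_i\le\Lambda M_i\,\length(\text{portion})$, i.e.\ the portion has length $\ge1/\Lambda$, whence $d(v_i,b)=\length(\Gamma)\ge1/\Lambda=r_k$. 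Minimising over $b$ yields $d_{\boundary}(v_i)\ge r_k$.

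\textbf{Mutual distances.}
For $i\ne j$ take a shortest path $\Gamma$ from $v_i$ to $v_j$; let $w$ be its first vertex leaving the sign of $D_i$ and $w'$ its last vertex leaving the sign of $D_j$. As in the previous step, the portion of $\Gamma$ from $v_i$ to $w$ and the portion from $w'$ to $v_j$ each have length $\ge1/\Lambda$. These two portions are edge‑disjoint, except in the single degenerate case where $D_i$ and $D_j$ are adjacent and $\Gamma$ crosses directly from $D_i$ to $D_j$ along one edge $e$; in that case they overlap only in $e$, and since $e$ is incident to a vertex of $D_i$ and a vertex of $D_j$ the local bound gives $|\grad f(e)|\le\Lambda\min\{M_i,M_j\}$, so splitting $\Gamma$ at whichever of $w,w'$ corresponds to the smaller of $M_i,M_j$ still yields $\length(\Gamma)\ge2/\Lambda$. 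Hence $d(v_i,v_j)\ge2/\Lambda=2r_k$ in all cases.

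\textbf{Conclusion and the main obstacle.}
The $v_i$ are distinct since the $D_i$ are pairwise disjoint, so the three stated inequalities hold with $r_k=1/\Lambda$, i.e.\ $\Lambda=1/r_k$; that $1/\Lambda$ is optimal for these centers follows by applying the boundary estimate to the domain containing a global maximizer of $|f|$, where $M_i=\NORM[\infty]{f}$ and the bounding portion can be run to $\boundary$ with exactly the slope $\Lambda M_i=\NORM[\infty]{\grad f}$ — which is also the statement of Proposition~\ref{optimal_paths_limiting_eigenfunctions}(3) for general $k$. The hard part is the local gradient bound of the first paragraph: it is precisely what replaces the crude Lipschitz constant $\NORM[\infty]{\grad f}$ by the sharp $\Lambda M_i$, turning the weak estimate $d_{\boundary}(v_i)\ge M_i/(\Lambda\NORM[\infty]{f})$ into $d_{\boundary}(v_i)\ge1/\Lambda$; it genuinely uses the full $\min/\max$ structure of \eqref{limitin_inf_eigenvalue_eq}, and can alternatively be obtained from the $p$‑Laplacian nodal‑domain decomposition of \cite{Tudisco1,DEIDDA2023_nod_dom} — in the $p=1$ form of \cite{ZhangNodalDO} — by letting $p\to\infty$.
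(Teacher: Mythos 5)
Your proof is correct, but it follows a genuinely different route from the paper's. The paper performs a graph surgery: every node where $f=0$ is declared a boundary node, and every sign-changing edge $(u,v)$ is subdivided by a new boundary node $w$ with conjugate weights $\edgelength_{uw}^{-1}+\edgelength_{wv}^{-1}=\edgelength_{uv}^{-1}$ chosen so that gradients and path lengths are preserved; each nodal domain then becomes a connected component of the new graph $\Gc'$ on which $(f,\Lambda)$ still solves \eqref{limitin_inf_eigenvalue_eq} with one sign, so the one-signed case (resting on Proposition~\ref{optimal_paths_limiting_eigenfunctions}) applies componentwise and gives $d_{\boundary(\Gc')}(v_h)=1/\Lambda$ exactly, while any path between two components must cross $\boundary(\Gc')$, yielding $d(v_{h_1},v_{h_2})\geq 2/\Lambda$. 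You instead prove a per-domain a priori bound $\NORM[\infty]{\Opc{\grad f}(u)}\leq\Lambda\max_{D_i}|f|$ by an increasing-walk contradiction and then run telescoping Lipschitz estimates along shortest paths. Your local bound is a genuine refinement of the global estimate \eqref{Lipschitzianity} and is of independent interest, and your argument avoids modifying the graph; the price is the hand-made case analysis for two adjacent nodal domains of opposite sign, which the paper's conjugate-weight subdivision handles automatically (it splits the crossing edge's length in exactly the proportion your inequality $|\grad f(e)|\leq\Lambda\min\{M_i,M_j\}$ encodes). Two spots deserve tightening: in the local-bound lemma the walk does not "reach a vertex with $f(w)=M_i$" — the correct conclusion is that the walk can always be strictly extended inside $D_i$, contradicting finiteness directly; and in the degenerate adjacent case you should spell out that splitting at the endpoint of $e$ lying in the domain with the larger maximum makes the first portion overshoot $1/\Lambda$ by $|f(w)|/(\Lambda\min\{M_i,M_j\})$, which compensates the second portion's deficit $|f(w)|/(\Lambda\max\{M_i,M_j\})$. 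I verified both repairs go through, so the proof is sound.
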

\begin{proof}
  Consider first the case of $f$ strictly positive. Then, given $v_1\in\nodemaxset(f)$, from Proposition \ref{optimal_paths_limiting_eigenfunctions} there exists a path $\Gamma=\{(u_i,u_{i+1})\}_{i=1}^{n-1}$ starting from $u_1=v_1$ such that $u_{n}\in\boundary$ and $\length(\Gamma)=d_{\boundary}(v_1)=\Lambda^{-1}$.
  Thus, the node $v_1$ and the radius $r_1=\Lambda^{-1}$ satisfy the thesis.
 
  Consider now the case of a function $f$ satisfying~\eqref{limitin_inf_eigenvalue_eq} and assume $\{A_i\}$, $\{B_j\}$ to be its nodal domains in such a way that:
  \begin{equation*}
    f(u)>0\quad  \forall u\in \cup_{i}A_i\,,\qquad f(u)<0\quad \forall u\in\cup_jB_j\,.    
  \end{equation*}
  Starting from the original graph $\Gc$ and the function $f$, we aim at defining a new disconnected graph $\Gc'$ such that:
  \begin{itemize}
  \item $\Gc'$ has as many connected components, $\Gc_h$, as the nodal domains of $f$,
  \item for any $h$, $(f|_{\Gc_h},\Lambda)$ satisfy the $\infty$-limit eigenvalue equation \eqref{limitin_inf_eigenvalue_eq} on $\Gc_h$,
  \item $f|_{\Gc_h}$ is strictly positive or strictly negative.
  \end{itemize}  
  To this end, we consider all the nodes $u$ such that $f(u)=0$ as boundary nodes of the new graph, $u\in\boundary(\Gc')$. 
  Furthermore, for any edge $(u,v)$ such that $f(u)f(v)<0$, we add a boundary node $w\in\boundary(\Gc')$ and replace the edge $(u,v)$ by two edges $(u,w)$ and $(w,v)$ with weights $\edgelength_{uw}=\edgelength_{uv}\big(1-f(v)/f(u)\big)$ and $\edgelength_{wv}=\edgelength_{uv}\big(1-f(u)/f(v)\big)$. Observe that, since $f(w)=0$ ($w\in\boundary(\Gc')$), the gradient on $(u,v)$ is the same as the gradients on $(u,w)$ and $(w,v)$, i.e.:
  \begin{equation}\label{gradient_unchanged}
    \begin{aligned}
      \grad f(u,v)
      =\edgelength_{uv}\big(f(v)-f(u)\big)
      =\edgelength_{uw}\big(0-f(u)\big)
      =\grad f(u,w),\\
      \grad f(v,u)
      =\edgelength_{uv}\big(f(u)-f(v)\big)
      =\edgelength_{vw}\big(0-f(v)\big)
      =\grad f(v,w),
    \end{aligned}
  \end{equation}
  and the weights are conjugate:
  \begin{equation}\label{distances_unchanged}
    \frac{1}{\edgelength_{uw}}+\frac{1}{\edgelength_{vw}}
    =\frac{1}{\edgelength_{uv}}\,.
  \end{equation}
  Note that the internal nodes of the new graph $\Gc'$ are the nodes of $\Gc$ where $f$ is non zero. 
  In addition, $\Gc'=(\cup_{h=1}^n \Gc_h)\cup\boundary(\Gc')$ with $\big(\Gc_{h_1}\cap\Gc_{h_2}=\emptyset$ for any pair of distinct indices $h_1$ and $h_2$ varying between 1 and $n$.  Moreover, every $\Gc_h$ matches one of the nodal domains induced by $f$, meaning that its internal nodes are all and only the nodes that belong to some nodal domain induced by $f$ on $\Gc$. 
 
  We now observe that for any internal node $u$ of $\Gc'$, from \eqref{gradient_unchanged}, the local gradient, $\Opc{\grad f}(u)$ is unchanged, implying that $(f|_{\Gc_h},\Lambda)$ satisfies the $\infty$-limit equation \eqref{limitin_inf_eigenvalue_eq} with respect to the graph $\Gc_h$.
  Thus, since $f|_{\Gc_h}$ is strictly positive or negative, the first part of the proof guarantees that, for any $h$, there exists $v_h\in\Gc_h$ such that 
  \begin{equation*}
    \Lambda=\frac{1}{d_{\boundary(\Gc')}(v_h)}\,.
  \end{equation*}
  We conclude the proof of the Theorem by observing that:
  \begin{equation*}
    \frac{2}{\Lambda}=d_{\boundary(\Gc')}(v_{h_1})+d_{\boundary(\Gc')}(v_{h_2})\leq d(v_{h_1},v_{h_2})\,.
  \end{equation*}
  To prove this last assertion, we first recall that $\Gc_{h_1}\cap\Gc_{h_2}=\emptyset$ and that, by construction eq.~\eqref{distances_unchanged} holds, i.e., we have not changed distances among internal nodes.
  Now, we let $\Gamma$ be the shortest path that joins $v_{h_1}$ and $v_{h_2}$ in $\Gc$, i.e. $d(v_{h_1},v_{h_2})=\length(\Gamma)$. 
  Then, in $\Gc'$, the path $\Gamma$ has necessarily been replaced by some path $\Gamma'$ that crosses $\boundary(\Gc')$ and such that $\length(\Gamma')=\length(\Gamma)$. 
\end{proof}

Finally, we conclude this section by studying  the relationships between the limits of the $p$-Laplacian variational eigenvalues, $\{\Lambda_k\}_k$, and the packing radii of $\Gc$ \cite{grove1995new}, $\{R_k\}_k$, i.e. the maximal radii that allow one to inscribe a prescribed number of disjoint balls in the graph.
We start by defining the $k$-th packing radius of the graph $\Gc$ as follows.
\begin{definition}[$k$-th packing radius $R_k$]\label{k-th_radius}
  \begin{multline*}
    R_k:=\max\left\{ r>0:\exists\, u_1,\dots,u_k\in\internalnodes,
      d(u_i,u_j)\geq 2r, \, d_{\boundary}(u_i)\geq r \:\forall\, i,j=1,\dots,k \right\}\!.
  \end{multline*}
\end{definition}
 Theorem~\ref{Thm_Nodal_domains_balls_inscribed} trivially implies the following corollary, which we state without proof.
\begin{corollary}\label{Corollary_lower_bound_radius_number_of_nodal_domains}
  Let $(f,\Lambda)$ be an eigenpair satisfying eq.~\eqref{limitin_inf_eigenvalue_eq} and assume that $f$ induces $k$ nodal domains.  Then:
  \begin{equation*}  
   \Lambda\geq\frac{1}{R_k}\,.
  \end{equation*}
\end{corollary}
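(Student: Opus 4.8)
The plan is to derive Corollary~\ref{Corollary_lower_bound_radius_number_of_nodal_domains} directly from Theorem~\ref{Thm_Nodal_domains_balls_inscribed} together with Definition~\ref{k-th_radius}. Given an eigenpair $(f,\Lambda)$ satisfying \eqref{limitin_inf_eigenvalue_eq} with $f$ inducing exactly $k$ nodal domains, Theorem~\ref{Thm_Nodal_domains_balls_inscribed} produces nodes $v_1,\dots,v_k\in\nodeset$ and a radius $r_k>0$ with $d(v_i,v_j)\geq 2r_k$ and $d_{\boundary}(v_i)\geq r_k$ for all $i\neq j$, and moreover $\Lambda=1/r_k$. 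The essential observation is that this is precisely the witness required in the definition of the $k$-th packing radius: the nodes $v_1,\dots,v_k$ and the value $r_k$ certify that $r_k$ belongs to the set over which the maximum defining $R_k$ is taken.

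First I would note that the $v_i$ can be taken to lie in $\internalnodes$: since $f$ is nonzero on each nodal domain and the $v_i$ are maximum points of $|f|$ restricted to their respective domains (or, in the reduced graph $\Gc'$ of the theorem's proof, internal nodes), they are not boundary nodes; alternatively $d_{\boundary}(v_i)\geq r_k>0$ already forces $v_i\notin\boundary$. Hence the tuple $(v_1,\dots,v_k)$ satisfies all the constraints in Definition~\ref{k-th_radius} with the value $r_k$, so $r_k$ is an admissible radius and therefore $r_k\leq R_k$. Consequently $\Lambda=1/r_k\geq 1/R_k$, which is the claimed bound.

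There is essentially no obstacle here: the corollary is a one-line consequence once Theorem~\ref{Thm_Nodal_domains_balls_inscribed} has been established, since that theorem already does all the work of constructing the inscribed balls from the nodal-domain structure. The only point requiring a moment's care — and the closest thing to a ``main difficulty'' — is confirming that the centers $v_i$ produced by the theorem are genuinely admissible centers for the packing-radius optimization, i.e. that they are internal nodes and that the two distance inequalities match the form in Definition~\ref{k-th_radius} verbatim; both are immediate from the theorem's conclusion. This is why the authors state it as a corollary without proof.
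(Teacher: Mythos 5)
Your argument is exactly the one the paper intends: the authors state the corollary without proof, noting that it follows trivially from Theorem~\ref{Thm_Nodal_domains_balls_inscribed}, since the nodes $v_1,\dots,v_k$ and radius $r_k$ produced there (with $\Lambda = 1/r_k$) are an admissible configuration in Definition~\ref{k-th_radius}, forcing $r_k \leq R_k$ and hence $\Lambda \geq 1/R_k$. Your additional check that the centers are internal nodes (via $d_{\boundary}(v_i)\geq r_k>0$) is a correct and harmless extra precaution.
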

From this corollary it follows immediately that $\Lambda=1/r_k$ with $r_k\leq R_k$.
We proceed now by looking at upper bounds for $\Lambda_k$. Recalling that the symbol  $\Lambda_k$ is used to denote \new{the limit of the sequence} $\lambda_{k}^{\frac{1}{p}}(\plap)$, we have the following Proposition relating $p$-Laplace eigenvalues and packing radii.
\begin{proposition}\label{Variational_eienvalues_radius_estimates} 
  Let $\lambda_k(\plap)$ be the $p$-Laplacian $k$-th variational eigenvalue on a graph $\Gc$ and let $R_k$ be the $k$-th packing radius of $\Gc$. 
  Then, 
  \begin{equation*}
   \new{\Lambda_k=}\lim_{p\rightarrow\infty}\lambda_k(\plap)^{\frac{1}{p}}\leq \frac{1}{R_k}\,.
  \end{equation*}
\end{proposition}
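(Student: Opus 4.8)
The plan is to build, for each sufficiently large $p$, an explicit admissible family $A \in \mathcal{F}_k(S_p)$ whose members all have small $p$-Rayleigh quotient, and then pass to the limit. The natural candidate family is supported on $k$ disjoint balls realizing the packing radius. Concretely, fix $r < R_k$ and nodes $u_1,\dots,u_k \in \internalnodes$ with $d(u_i,u_j) \ge 2r$ and $d_{\boundary}(u_i) \ge r$ for all $i \ne j$. For each $i$, let $\phi_i(u) = \bigl(r - d(u,u_i)\bigr)^+$, a ``tent'' node function supported on the ball $B_r(u_i)$; the disjointness of these balls (from the packing condition, since $d(u_i,u_j)\ge 2r$) guarantees that the $\phi_i$ have pairwise disjoint supports, and $d_{\boundary}(u_i)\ge r$ guarantees each $\phi_i \in \Hc_0(\nodeset)$. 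Then I would consider the set of normalized linear combinations
\begin{equation*}
  A = \Bigl\{ \, \textstyle\sum_{i=1}^k c_i \phi_i \ : \ c \in \R^k, \ \bigl\| \sum_i c_i \phi_i \bigr\|_p = 1 \, \Bigr\}\,,
\end{equation*}
which is homeomorphic to the sphere $S^{k-1}$ via $c \mapsto c/\|c\|$ (a standard argument using disjoint supports), hence closed, symmetric, and of genus exactly $k$, so $A \in \mathcal{F}_k(S_p)$.

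The second step is to estimate $\rayl_p(f)$ uniformly over $f = \sum_i c_i \phi_i \in A$. Because the supports are disjoint, $\|\grad f\|_p^p = \sum_i |c_i|^p \|\grad \phi_i\|_p^p$ and $\|f\|_p^p = \sum_i |c_i|^p \|\phi_i\|_p^p$, so
\begin{equation*}
  \rayl_p(f)^p = \frac{\sum_i |c_i|^p \|\grad \phi_i\|_p^p}{\sum_i |c_i|^p \|\phi_i\|_p^p} \le \max_{i} \frac{\|\grad \phi_i\|_p^p}{\|\phi_i\|_p^p}\,,
\end{equation*}
which reduces everything to a single tent function. For the tent $\phi_i$: along shortest paths from $u_i$, the gradient of $d(\cdot,u_i)$ has absolute value $1$ on edges lying on a geodesic (as recalled in the proof of Corollary~\ref{First_infinity_eigenvalue}) and $\le 1$ elsewhere, so $\|\grad \phi_i\|_\infty \le 1$; therefore $\|\grad\phi_i\|_p \le (|E|)^{1/p}$ up to a harmless constant. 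On the other hand $\|\phi_i\|_p \ge \phi_i(u_i) = r$, since $\phi_i(u_i)=r$. Hence $\rayl_p(f) \le C^{1/p}/r$ for a constant $C$ depending only on the graph (e.g. $C = |E|$), uniformly in $f \in A$.

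Combining, $\lambda_k(\plap)^{1/p} \le \max_{f \in A}\rayl_p(f)^{1/p} \cdot$ — wait, more directly: $\lambda_k(\plap) \le \max_{f\in A}\rayl_p(f) \le C^{1/p}/r$, so $\lambda_k(\plap)^{1/p} \le C^{1/p^2}/r^{1/p}$; taking $\limsup_{p\to\infty}$ gives $\limsup_p \lambda_k(\plap)^{1/p} \le 1/r$ since $C^{1/p^2}\to 1$ and $r^{1/p}\to 1$. Finally, letting $r \uparrow R_k$ yields $\limsup_p \lambda_k(\plap)^{1/p} \le 1/R_k$. The main obstacle I anticipate is the genus computation for $A$: one must verify carefully that the map $c \mapsto \sum_i c_i\phi_i / \|\sum_i c_i\phi_i\|_p$ is a well-defined odd homeomorphism from $S^{k-1}$ onto $A$ (well-definedness needs that no nontrivial combination vanishes, which holds because the $\phi_i$ are nonzero with disjoint supports), so that $\gamma(A) = k$ and $A$ is genuinely admissible in the min-max; the Rayleigh estimates themselves are routine once the tent functions and their disjoint supports are set up.
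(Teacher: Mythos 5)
Your overall strategy is the same as the paper's (tent functions centered at a packing configuration, the span as a genus-$k$ admissible set, then min--max and a $p\to\infty$ limit), and your genus argument and the single-tent estimates $\NORM[\infty]{\grad \phi_i}\le 1$, $\NORM[\infty]{\phi_i}=r$ are fine. However, there is a genuine gap in the step where you reduce to a single tent. Disjointness of the node supports of the $\phi_i$ does \emph{not} give $\NORM[p]{\grad f}^p=\sum_i|c_i|^p\NORM[p]{\grad\phi_i}^p$: an edge $(u,v)$ may join the ball around $u_i$ to the ball around $u_j$ (this is compatible with $d(u_i,u_j)\ge 2r$ whenever $1/\edgelength_{uv}$ is large enough), and on such an edge $\grad f(u,v)=\edgelength_{uv}\big(c_j\phi_j(v)-c_i\phi_i(u)\big)$ mixes two coefficients. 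When $c_i$ and $c_j$ have opposite signs the true contribution $|c_i\phi_i(u)-c_j\phi_j(v)|^p=(|c_i|\phi_i(u)+|c_j|\phi_j(v))^p$ can strictly exceed $|c_i|^p\phi_i(u)^p+|c_j|^p\phi_j(v)^p$, so your identity is not even an upper bound, and the inequality $\rayl_p(f)^p\le\max_i\NORM[p]{\grad\phi_i}^p/\NORM[p]{\phi_i}^p$ fails. Concretely, on the path $b_1\sim u_1\sim u_2\sim b_2$ with $\edgelength_{b_1u_1}=\edgelength_{u_2b_2}=1$ and $\edgelength_{u_1u_2}=1/2$ one has $R_2=1$ with centers $u_1,u_2$, the tents are indicators of $u_1$ and $u_2$, and for $c=(1,-1)$ your claimed bound gives $\rayl_p(f)^p\le 1+2^{-p}$ while the true value is $3/2$.

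The gap is exactly what the paper's three-case, edge-by-edge estimate is designed to close: for every edge $(u,v)$ one shows $\edgelength_{uv}|f(u)-f(v)|\le\max_l|c_l|$, distinguishing whether the endpoints lie in the same ball, in two different balls (here one uses $d(u_i,u_j)\ge 2R_k$ together with the triangle inequality to absorb the cross term), or one endpoint lies outside all balls. This yields $\NORM[p]{\grad f}^p\le |\edgeset|\max_l|c_l|^p$, and combined with $\NORM[p]{f}^p=\sum_i|c_i|^p\NORM[p]{\phi_i}^p\ge r^p\max_l|c_l|^p$ (which is the part of your decomposition that \emph{is} valid, since the node supports really are disjoint) one gets $\rayl_p(f)\le |\edgeset|^{1/p}/r$ uniformly on $A$, after which your limit argument goes through. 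As a minor point, the approximation $r\uparrow R_k$ is unnecessary: $R_k$ is attained in Definition~\ref{k-th_radius}, so you can work with $r=R_k$ directly, as the paper does.
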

\begin{proof}
\new{We recall that the existence of the limit $\lim_{p\to \infty}\lambda_k(\plap)^\frac{1}{p}$ is guaranteed from eq.~\eqref{eq_monotonocity_variational_eigenvalue} and \cite{deidda2025nonlinear}.} Then, let $u_1,\dots,u_k$ be $k$ internal nodes of $\Gc$ as in Definition \ref{k-th_radius}, i.e.:
  \begin{equation*}
    d(u_i,u_j)\geq 2R_k\,, \qquad d_{\boundary}(u_i)\geq R_k\quad \forall i,j=1,\dots,k\,.
  \end{equation*}
  We define $k$ linearly independent functions as:
  \begin{equation*}
    f_i(u)=\max\lbrace R_k-d(u,u_i),0\rbrace\,.
  \end{equation*}
  Obviously, the set $A_k$ spanned by these functions, $A_k=\Span\lbrace f_i \rbrace_{i=1}^k$, has dimension $k$ and also its Krasnoselskii genus $\gamma(A_k)$ is equal to $k$.
  Then, by definition, we have:
  \begin{multline*}
    \lambda_k(\plap)
    \leq\max_{f\in A_k}\mathcal{R}_{\plap}(f)
    =\max_{f\in A_k}\frac{\sum_{(u,v)\in E}\edgelength_{uv}^p| f(u)-f(v)|^p}%
    {2\sum_{u\in\nodeset}|f(u)|^p}\\
    =\frac{
      \sum_{(u,v)\in E}\edgelength_{uv}^p
      \big|\sum_{i=1}^k\alpha_i f_i(u)-\sum_{j=1}^k \alpha_j f_j(v)\big|^p}%
    {2\sum_{m=1}^k \left(\sum_{d(u,u_m)<R_k}|\alpha_mf_m(u)|^p\right)}\,,
  \end{multline*}
  where we recall the factor $1/2$ is in the definition of the $p$-norm of $\grad f$.
  If $u$ and $v$ are such that both $d(u,u_i)$ and $d(v,u_j)$ are smaller than $R_k$, then:
  \begin{align*}
    \edgelength_{uv}| f(u)-f(v)|
    &= \edgelength_{uv}|\alpha_i||f_i(u)-f_i(v)\big|
      = \edgelength_{uv}|\alpha_i||d(u,u_i)-d(v,u_i)\big|\\
    &\leq \edgelength_{uv}|\alpha_i|d(u,v)
      \leq \edgelength_{uv}|\alpha_i|\frac{1}{\edgelength_{uv}}
      \leq |\alpha_i|\,.
  \end{align*}
  Instead, if $d(u,u_i)<R_k$ and $d(v,u_j)<R_k$ with $i\neq j$, then:
  \begin{align*}
    \edgelength_{uv}| f(u)-f(v)|
    &=\edgelength_{uv}|\alpha_i\,f_i(u)-\alpha_j\,f_j(v)|\\
    &\leq \edgelength_{uv}\,\max\{|\alpha_i|,\,|\alpha_j|\}\,
      \big(R_k-d(u,u_i)+R_k-d(v,u_j)\big)\\
    &\leq \edgelength_{uv}\,\max_l\{|\alpha_l|\}
      \big(2R_k-d(u_i,u_j)+d(u,v)\big)\\
    &\leq \edgelength_{uv}\,\max_l\{|\alpha_l|\}
      \big(2R_k-2R_k+d(u,v)\big)\\
    &\leq \edgelength_{uv}\,\max_l\{|\alpha_l|\}
      d(u,v)\\
    &\leq \max_l\{|\alpha_l|\}\,.
  \end{align*}
  Lastly, if $d(u,u_i)<R_k$ and $d(v,u_j)\geq R_k$ for all $j$, we have:
  \begin{align*}
    \edgelength_{uv}|f(u)-f(v)|=
    &\edgelength_{uv}|\alpha_i|\big(R_k-d(u,u_i)\big)\\
    &=\edgelength_{uv}|\alpha_i|\big(R_k-d(u,u_i)+ d(u,v)-d(u,v)\big)\\
    &\leq \edgelength_{uv}|\alpha_i|\big(d(u,v)+R_k-d(v,u_i)\big) \\
    &\leq \edgelength_{uv}|\alpha_i|d(u,v)\\
    &\leq |\alpha_i|\,.
  \end{align*}
  Using the above inequalities in the expression for $\lambda_k$, we can write:
  \begin{equation*}
    \frac{\sum_{(u,v)\in E}\edgelength_{uv}^p
      \big|\sum\alpha_if_i(u)-\sum \alpha_if_i(v)\big|^p}%
    {2\sum_{m=1}^k \big(\sum_{d(u,u_m)<R_k}|\alpha_mf_m(u)|^p\big)}
    \leq
    \frac{\sum_{(u,v)\in E}\max_i|\alpha_i|^p}%
    {2\sum_{m=1}^k\big(\sum_{d(u,u_m)<R_k}|\alpha_mf_m(u)|^p\big)}\,.
  \end{equation*}
  Passing to the limit we arrive at:
  \begin{align*}
    \limsup_{p\rightarrow\infty} \lambda_k^{\frac{1}{p}}(\plap)
    &\leq
    \limsup_{p\rightarrow\infty}
    \bigg(
    \frac{\sum_{(u,v)\in E}\max_i|\alpha_i|^p}%
    {2\sum_{m=1}^k \big(\sum_{d(u,u_m)<R_k}|\alpha_mf_m(u)|^p\big)}
    \bigg)^{\frac{1}{p}}\\
    &=\frac{\max_i|\alpha_i|}{\max_{m,u} |\alpha_m f_m(u)|}\,.
  \end{align*}
  The thesis is obtained by observing that $f_i(u_i)= R_k$ and thus $\max_{m,u}|\alpha_{m} f(u)|= R_k\max_m|\alpha_m|$.
\end{proof}
 Next, we show the main theorem of this section, which provides a geometrical characterization of the first two  variational eigenvalues of the $p$-Laplacian operator showing that they are exactly equal to the reciprocal of the first two packing radii.
\begin{theorem}\label{thm:packing-radii}
  Assume $\Gc$ to be a connected graph and $\Lambda_k:=\lim_{p_j\rightarrow \infty}\lambda_k(\Delta_{p})^{\frac{1}{p}}$, $k=1,2$. Then 
  \begin{equation*}
    \Lambda_k=\frac{1}{R_k}\quad k=1,2\,.
  \end{equation*}
\end{theorem}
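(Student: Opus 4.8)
The plan is to prove the two equalities separately, combining the already-established upper bound with matching lower bounds obtained by constructing admissible families in the $\min$-$\max$ definition of $\lambda_k(\plap)$. For $k=1$, the case is essentially a soft one: $\lambda_1(\plap)^{1/p} = \min_{f}\rayl_p(f)^{1/p}$ and, as $p\to\infty$, $\rayl_p(f)^{1/p}\to \rayl_\infty(f) = \|\grad f\|_\infty/\|f\|_\infty$ uniformly enough on the relevant class (using that on a finite graph $\|G\|_{p,\edgeset}^{1/p}\to\|G\|_\infty$ and $\|f\|_{p,\nodeset}^{1/p}\to\|f\|_\infty$). Hence $\Lambda_1 = \lim_p \lambda_1(\plap)^{1/p} = \min_f \rayl_\infty(f)$, which by \cref{First_infinity_eigenvalue} equals $1/R_1$. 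I would make the convergence argument precise by noting that $\lambda_1(\plap)^{1/p}\le \rayl_p(d_{\boundary})^{1/p}\to\rayl_\infty(d_{\boundary}) = 1/R_1$ for the upper bound (this is also covered by \cref{Variational_eienvalues_radius_estimates}), and for the lower bound that any accumulation point $(f,\Lambda_1)$ of $(f_{p_j}, \lambda_1(\plap_{p_j})^{1/p_j})$ (with $f_{p_j}$ normalized, say $\|f_{p_j}\|_\infty = 1$) satisfies $\Lambda_1 = \|\grad f\|_\infty/\|f\|_\infty \ge \min_g\rayl_\infty(g) = 1/R_1$, using lower semicontinuity of $\|\grad\cdot\|_\infty$ and that the limit $f$ is nonzero because $\|f_{p_j}\|_\infty=1$ forces $\|f\|_\infty=1$.

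For $k=2$, the upper bound $\Lambda_2 \le 1/R_2$ is \cref{Variational_eienvalues_radius_estimates}, so the work is the lower bound $\Lambda_2\ge 1/R_2$. I would extract a convergent subsequence $(f_{p_j},\lambda_2(\plap_{p_j})^{1/p_j})\to(f,\Lambda_2)$ with $\|f_{p_j}\|_\infty = 1$; by \cref{limitin_infty_eigenvalue_theorem} the limit pair $(f,\Lambda_2)$ solves \eqref{limitin_inf_eigenvalue_eq}, and by \cref{Prop_maximal_points} one has $\Lambda_2 = \|\grad f\|_\infty/\|f\|_\infty$. The key structural fact I need is that a second variational eigenfunction must change sign, i.e. $f$ has at least two nodal domains: $\Nc(f)\ge 2$. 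This should follow from the standard Courant-type nodal count for variational eigenvalues of the $p$-Laplacian — specifically the result (cited from \cite{Tudisco1, DEIDDA2023_nod_dom, ZhangNodalDO} just before \cref{Thm_Nodal_domains_balls_inscribed}) that the first variational eigenfunction has a single nodal domain while higher ones have at least two — passed through the limit $p\to\infty$; the sign-change property is preserved in the limit as long as one checks $f\not\equiv 0$ (again guaranteed by the normalization) and that $f$ is not a constant (here one uses that $\Hc_0$ rules out nonzero constants when $\boundary\ne\emptyset$, or an orthogonality/symmetry argument when $\boundary=\emptyset$). Once $\Nc(f)\ge 2$, \cref{Thm_Nodal_domains_balls_inscribed} (equivalently \cref{Corollary_lower_bound_radius_number_of_nodal_domains}) applied with $k=\Nc(f)\ge 2$ gives $\Lambda_2 = 1/r_{\Nc(f)}$ with $r_{\Nc(f)}\le R_{\Nc(f)}\le R_2$ (packing radii are nonincreasing in the index), hence $\Lambda_2\ge 1/R_2$. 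Combining with the upper bound yields $\Lambda_2 = 1/R_2$, and since the bound pins down every accumulation point, the full sequence $\lambda_2(\plap)^{1/p}$ converges.

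The main obstacle I anticipate is the nodal-domain lower bound $\Nc(f)\ge 2$ for the limiting second eigenfunction: the cleanest route is to invoke a discrete Courant nodal theorem for variational $p$-eigenfunctions uniformly in $p$ and pass to the limit, but one must be careful that nodal domains do not "disappear" in the limit — e.g. a nodal domain of $f_{p_j}$ could shrink so that $f$ vanishes on it. The safeguard is the normalization $\|f_{p_j}\|_\infty=1$ together with the equicontinuity coming from the uniform Lipschitz bound \eqref{Lipschitzianity} (the gradients $\|\grad f_{p_j}\|_\infty$ stay bounded because $\lambda_2(\plap_{p_j})^{1/p_j}$ is bounded), so that the positive and negative parts of $f$ are genuinely nontrivial. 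An alternative, more self-contained argument for the lower bound avoids nodal counts entirely: build on \cref{optimal_paths_limiting_eigenfunctions}, which already shows $\Lambda_2 \le 2/d(u,v)$ for $v$ with $f(v)=-f(u)$ — wait, that is the wrong direction — so instead use the two-point structure directly: from a maximum node $u_+$ of $f$ and a minimum node $u_-$ (existing since $f$ changes sign), \cref{optimal_paths_limiting_eigenfunctions}(3) localized to each nodal domain yields two points realizing $\Lambda_2 = 1/d_{\boundary(\Gc')}(v_1) = 1/d_{\boundary(\Gc')}(v_2)$ with $d(v_1,v_2)\ge 2/\Lambda_2$, which is exactly the packing configuration for $k=2$, giving $1/\Lambda_2 \le R_2$. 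Either way, the sign-change of the second variational eigenfunction is the crux and is the step I would spend the most care on.
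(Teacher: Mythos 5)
Your overall architecture is the same as the paper's: the upper bound $\Lambda_i\leq 1/R_i$ from \cref{Variational_eienvalues_radius_estimates}, the case $i=1$ from \cref{First_infinity_eigenvalue} together with $\lambda_1(\plap)=\min_f\rayl_p(f)$, and the lower bound for $i=2$ by showing the limit eigenfunction has at least two nodal domains and invoking \cref{Corollary_lower_bound_radius_number_of_nodal_domains}. The $i=1$ part is fine.

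The gap is exactly where you predicted it, and your proposed fix does not close it. The normalization $\|f_{p_j}\|_\infty=1$ together with the uniform Lipschitz bound \eqref{Lipschitzianity} only guarantees $f\not\equiv 0$; it does not prevent the negative part of $f_{p_j}$ from shrinking to zero while the positive part survives (a uniformly Lipschitz sequence with $\max f_{p_j}=1$ and $\min f_{p_j}=-1/j$ is perfectly consistent with everything you have assumed), so ``sign-change is preserved in the limit'' does not follow from equicontinuity. Your ``alternative, self-contained'' route is circular on this point: it starts from ``a minimum node $u_-$ (existing since $f$ changes sign)'', which is precisely the claim at issue. The paper closes the gap with a different mechanism, using the limit equation itself rather than compactness: setting $A=\cap_n\cup_{p_h>n}A_{p_h}$ and $B=\cap_n\cup_{p_h>n}B_{p_h}$ (nodes lying in the positive, resp.\ negative, nodal domain for infinitely many $p_h$), finiteness of the graph gives $A,B\neq\emptyset$, hence $f\geq 0$ on $A$ and $f\leq 0$ on $B$; if $f$ had, say, no strictly positive values, connectedness would produce an internal node $u$ with $f(u)=0$ adjacent to a node where $f<0$ and no neighbor where $f>0$, so that $\inflap f(u)=\|\Opc{\grad f}^-(u)\|_\infty>0$, contradicting the case $f(u)=0$ of \eqref{limitin_inf_eigenvalue_eq} established in \cref{limitin_infty_eigenvalue_theorem}. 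You should replace your equicontinuity argument with this (or an equivalent) use of the limit equation at zero nodes; without it the lower bound $\Lambda_2\geq 1/R_2$ is not proved.
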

\begin{proof}
  The proof for the first eigenvalue ($k=1$) follows easily from Proposition \ref{Variational_eienvalues_radius_estimates}, Corollary \ref{First_infinity_eigenvalue}, and the observation that $\lambda_1(\plap)=\min_{f\in \mathcal{H}_0(\nodeset)}\rayl_p(f)$.

  For the second eigenvalue ($i=2$) we proceed as follows. Proposition \ref{Variational_eienvalues_radius_estimates} ensures that $\Lambda_2\leq \frac{1}{R_2}$.
  Then we can consider a subsequence indexed by $p_h$ of convergent eigenpairs, i.e.:
  \begin{equation*}
    \big(f_{2}(\Delta_{p_h}),\lambda_2(\Delta_{p_h})\big)\rightarrow (f,\Lambda_2)\,.    
  \end{equation*}
  From \cite{Tudisco1,DEIDDA2023_nod_dom}, we know that, for any $p_h$, $f_2(\Delta_{p_h})$ has at least two nodal domains, which we denote by $A_{p_h}$ for $u\in\internalnodes$ such that $f_2(\Delta_{p_h})(u)>0$, and $B_{p_h}$ for $u\in\internalnodes$ such that $f_2(\Delta_{p_h})(u)<0$.
  Then the sets 
  $A=\cap_n\cup_{p_h>n}A_{p_h}$ %
  and 
  $B=\cap_n\cup_{p_h>n}B_{p_h}$
  are both non empty and such that $f(u)\geq 0$ for any $u\in A$ and $f(u)\leq 0$ for any $u\in B$.
  If by contradiction $\{u\:|\;f(u)>0\}=\emptyset$, since $f\neq0$, there has to exist a node $u$ with $f(u)=0$ that is connected to a node $v\sim u$ such that $f(v)<0$ that means: 
  \begin{equation*}
    \inflap f(u)=\|\Opc{\grad f}^-(u)\|_{\infty}-\|\Opc{\grad f}^{+}(u)\|_{\infty}=\|\Opc{\grad f}^-(u)\|_{\infty}>0\,.
  \end{equation*}
  But this is an absurd because $f$ has to satisfy eq.~\eqref{limitin_inf_eigenvalue_eq}, implying that $f$ must induce at least two nodal domains.
  Then, thanks to Corollary \ref{Corollary_lower_bound_radius_number_of_nodal_domains}, we get:
  \begin{equation*}
    \Lambda_2=\frac{1}{r_2}\geq \frac{1}{R_2}\,,
  \end{equation*}
  which concludes the proof.  
\end{proof}

Finally, the following noteworthy Corollary provides lower and upper bounds to the $k$-th variational $\infty$-eigenvalues in terms of packing radii and nodal domains of the corresponding $k$-th eigenfunctions.
  The proof is a trivial consequence of Proposition \ref{Variational_eienvalues_radius_estimates}, Corollary \ref{Corollary_lower_bound_radius_number_of_nodal_domains} and Theorem \ref{limitin_infty_eigenvalue_theorem}.
%

\begin{corollary}\label{cor:lower-upper}
  Let $(f_{k,p_j},\lambda_{k}(\Delta_{p_j}))$ be a sequence of the $k$-th variational $p$-Laplacian eigenpairs. If $
    \lim_{j\rightarrow \infty}(f_{k,p_j},\lambda_{k}^{1/{p_j}}(\Delta_{p_j}))
    =(f_k,\Lambda_k)\,,$
  then 
  \begin{equation*}
    \frac{1}{R_{\mathcal{N}(f_k)}}\leq\Lambda_k\leq\frac{1}{R_K}\,,
  \end{equation*}
  where $\mathcal{N}(f_k)$ is the number or nodal domains induced by $f_k$.
\end{corollary}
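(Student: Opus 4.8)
The plan is to derive the two inequalities independently, each as a direct consequence of one of the three cited results, so that the only real content is bookkeeping about the convergent subsequence $\{p_j\}$.

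For the upper bound I would simply invoke Proposition~\ref{Variational_eienvalues_radius_estimates}. Since, by hypothesis, $\lambda_k(\Delta_{p_j})^{1/p_j}\to\Lambda_k$ along the chosen subsequence, we have
\begin{equation*}
  \Lambda_k=\lim_{j\to\infty}\lambda_k(\Delta_{p_j})^{1/p_j}
  \le\limsup_{p\to\infty}\lambda_k(\Delta_p)^{1/p}\le\frac{1}{R_k}\,,
\end{equation*}
because the limit of a subsequence cannot exceed the $\limsup$ of the whole family. This gives $\Lambda_k\le 1/R_k$ (the index in the statement's right-hand side $1/R_K$ should read $1/R_k$).

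For the lower bound the key observation is that $(f_k,\Lambda_k)$ is, by construction, an accumulation point of a sequence of $p$-Laplacian eigenpairs, so Theorem~\ref{limitin_infty_eigenvalue_theorem} applies and tells us that $(f_k,\Lambda_k)$ solves the $\infty$-limit eigenvalue equation~\eqref{limitin_inf_eigenvalue_eq}. Feeding this pair into Corollary~\ref{Corollary_lower_bound_radius_number_of_nodal_domains} — which asserts that any solution of~\eqref{limitin_inf_eigenvalue_eq} inducing $m$ nodal domains satisfies $\Lambda\ge 1/R_m$ — with $m=\mathcal{N}(f_k)$ yields $\Lambda_k\ge 1/R_{\mathcal{N}(f_k)}$. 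Combining the two displays gives the chain $1/R_{\mathcal{N}(f_k)}\le\Lambda_k\le 1/R_k$, which is the claim.

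The only points requiring a word of care — and where the (mild) main obstacle lies — are that $f_k$ must be nonconstant so that the notion of nodal domains and the hypotheses of Corollary~\ref{Corollary_lower_bound_radius_number_of_nodal_domains} and Theorem~\ref{Thm_Nodal_domains_balls_inscribed} are meaningful, and that $R_{\mathcal{N}(f_k)}$ must be well defined. The former follows since the approximating eigenfunctions lie on the unit spheres $S_{p_j}$, so the limit $f_k$ is nontrivial, and a Dirichlet eigenfunction cannot be constant; the latter follows from Theorem~\ref{Thm_Nodal_domains_balls_inscribed}, which guarantees the existence of $\mathcal{N}(f_k)$ suitably separated ball centers, hence the packing radius carrying that index exists and is positive.
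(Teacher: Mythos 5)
Your proposal is correct and follows exactly the route the paper indicates: the upper bound from Proposition~\ref{Variational_eienvalues_radius_estimates} (since the limit along the subsequence is bounded by the $\limsup$), and the lower bound by combining Theorem~\ref{limitin_infty_eigenvalue_theorem} with Corollary~\ref{Corollary_lower_bound_radius_number_of_nodal_domains}. The paper leaves this as a ``trivial consequence'' of those three results, and your write-up simply makes the same argument explicit, with reasonable added remarks on nonconstancy and the typo $R_K$ versus $R_k$.
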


\begin{remark}
  It is worthwhile to spend a short remark about the non-boundary case, which is analogous to the homogeneous Neumann case in the continuum setting \cite{Esposito}. In this case, we can think of the Neumann case as a Dirichlet problem defined on a graph $\Gc$ having a boundary set $\boundary$ formed by nodes $v$ at an infinite distance from any internal node, so that the edge weights $\edgelength_{uv}$ are zero for all edges such that $u\in\internalnodes$ and $v\in\boundary$. Then it is clear that in this case $\Lambda_1=0$ and
  \begin{equation*}
    \Lambda_2=\frac{1}{R_2}:=1/\sup\lbrace r \mathrm{ \;\;s.t.\; \;} \exists v_1,v_2 \;\:\mathrm{s.t.}\;\: d(v_1,v_2)\geq 2r \rbrace\,,
  \end{equation*}
  i.e. half the diameter of the graph. Moreover,
  for the higher eigenvalues, similarly to Proposition \ref{Variational_eienvalues_radius_estimates}, we can state
  that:
  \begin{equation*}
    \Lambda_k^{-1}\geq \sup\lbrace r>0:\; \exists\; v_1,\ldots,v_k\in\internalnodes :\; d(v_i,v_j)\geq 2r \;\: \forall i,j=1,\dots,k \rbrace\,.
  \end{equation*}
\end{remark}

We conclude this section by producing some examples. We first show that the solution of \eqref{limitin_inf_eigenvalue_eq} is not always well defined and that not any solution of \eqref{limitin_inf_eigenvalue_eq} is also the limit of $p$-Laplacian eigenfunctions.
\begin{NEW}
In the second example we investigate the properties of the boundary distance function $d_{\boundary}$ as a minimizer of the infinity Rayleigh quotient $\rayl_{\infty}$. Indeed, even if $d_{\boundary}$  always realizes the minimum of $\rayl_{\infty}$, generically it is not the unique minimizer and it may not satisfy the limit eigenvalue equation~\eqref{limitin_inf_eigenvalue_eq}, (see also \cite{bungert2021eigenvalue, deidda2025nonlinear, Lind2, Yu}).
\end{NEW}

\begin{example}\label{ex:1} 
  Consider the graph of figure \ref{fig_ex_1}.
\begin{figure}[h]
  \begin{center}
    \begin{tikzpicture}[x={(\unitlength,0)},y={(0,\unitlength)}]
      \coordinate (Origin) at (0,0);
      \def\xscale{40}
      \def\yscale{40}
      \coordinate (Xone) at (\xscale,0);
      \coordinate (Yone) at (0,\yscale);
      \coordinate (N1) at ($(Origin)$);
      \coordinate (N2) at ($(Origin)+2*(Xone)$);
      \coordinate (N3) at ($(Origin)+3*(Xone)$);
      \coordinate (N4) at ($(Origin)+4*(Xone)$);
      \coordinate (N5) at ($(Origin)+5*(Xone)$);
      \coordinate (N6) at ($(Origin)+6*(Xone)$);
      \coordinate (N7) at ($(Origin)+8*(Xone)$);
      \coordinate (N8) at ($(Origin)+4*(Xone)+1*(Yone)$);
      \draw[black]
      (N1) node[shape=circle,draw,inner sep=0.5pt, fill=white]
      {{$\boundary$}};
      \draw[black]
      (N2) node[shape=circle,draw,inner sep=0.5pt, fill=white]
      {{$u_1$}};
      \draw[black] 
      (N3) node[shape=circle,draw,inner sep=0.5pt, fill=white]
      {{$u_2$}} ;
      \draw[black]
      (N4) node[shape=circle,draw,inner sep=0.5pt, fill=white]
      {{$u_3$}};
      \draw[black]
      (N5) node[shape=circle,draw,inner sep=0.5pt, fill=white]
      {{$u_4$}};
      \draw[black]
      (N6) node[shape=circle,draw,inner sep=0.5pt, fill=white]
      {{$u_5$}};  
      \draw[black]
      (N7) node[shape=circle,draw,inner sep=0.5pt, fill=white]
      {{$\boundary$}};
      \draw[black]
      (N8) node[shape=circle,draw,inner sep=0.5pt, fill=white]
      {{$\boundary$}}; 
      \begin{scope}[on background layer]
        \draw[black]
        (N1)--(N2) node[pos=0.5,sloped,above]
        {\scriptsize{$\edgelength_{1\boundary}\!=\!1$}};;
        \draw[black]
        (N2)--(N3) node[pos=0.5,sloped,above]
        {\scriptsize {$\edgelength_{12}\!=\!2$}};
        \draw[black]
        (N3)--(N4)node[pos=0.5,sloped,above]
        {\scriptsize {$\edgelength_{23}\!=\!2$}};
        \draw[black]
        (N4)--(N5) node[pos=0.5,sloped,above]
        {\scriptsize {$\edgelength_{34}\!=\!2$}};;
        \draw[black]
        (N5)--(N6) node[pos=0.5,sloped,above]
        {\scriptsize {$\edgelength_{45}\!=\!2$}};
        \draw[black]
        (N6)--(N7)node[pos=0.5,sloped,above]
        {\scriptsize {$\edgelength_{5\boundary}\!=\!1$}};
        \draw[black]
        (N4)--(N8)node[pos=0.5,left]
        {\scriptsize {$\edgelength_{3\boundary}\!=\!2$}};
      \end{scope}
    \end{tikzpicture}
  \end{center}
  \caption{Graph of Example~\ref{ex:1}.}\label{fig_ex_1}
\end{figure}

  Easy computations show that both functions $f$ and $g$ given by:
  \begin{equation*}
    f=\left(1,\frac{2}{3},\frac{1}{3},\frac{2}{3},1\right)
  %
  \qquad\mbox{and}\qquad 
  %
    g=\left(1,\frac{2}{3},\frac{1}{3},\frac{2}{3},\frac{4}{9}\right)
  \end{equation*}
  satisfy the $\infty$-limit eigenvalue equation~\eqref{limitin_inf_eigenvalue_eq} with $\Lambda=1$.
  Nevertheless, $g$ cannot be the limit of first eigenfunctions of the $p$-Laplacian, which, because of their uniqueness and the symmetry of the above graph, must be symmetric for any $p$.
\end{example}

\begin{NEW}

\begin{example}\label{ex:2}

  Consider the path graph of Figure \ref{fig_ex_2}.
  \begin{figure}[h]
  \begin{center}
    \begin{tikzpicture}[x={(\unitlength,0)},y={(0,\unitlength)}]
      \coordinate (Origin) at (0,0);
      \def\xscale{40}
      \def\yscale{40}
      \coordinate (Xone) at (\xscale,0);
      \coordinate (Yone) at (0,\yscale);
      \coordinate (N1) at ($(Origin)$);
      \coordinate (N2) at ($(Origin)+1.5*(Xone)$);
      \coordinate (N3) at ($(Origin)+3.75*(Xone)$);
      \coordinate (N4) at ($(Origin)+6*(Xone)$);
      \draw[black]
      (N1) node[shape=circle,draw,inner sep=0.5pt, fill=white]
      {{$\boundary$}};
      \draw[black]
      (N2) node[shape=circle,draw,inner sep=0.5pt, fill=white]
      {{$u_1$}};
      \draw[black] 
      (N3) node[shape=circle,draw,inner sep=0.5pt, fill=white]
      {{$u_2$}} ;
      \draw[black]
      (N4) node[shape=circle,draw,inner sep=0.5pt, fill=white]
      {{$\boundary$}};
      \begin{scope}[on background layer]
        \draw[black]
        (N1)--(N2) node[pos=0.5,sloped,above]
        {\small {$\edgelength_{1\boundary}\!=\!3$}};;
        \draw[black]
        (N2)--(N3) node[pos=0.5,sloped,above]
        {\small {$\edgelength_{12}\!=\!2$}};
        \draw[black]
        (N3)--(N4)node[pos=0.5,sloped,above]
        {\small {$\edgelength_{2B}\!=\!2$}};
      \end{scope}
    \end{tikzpicture}
  \end{center}
  \caption{Graph of Example~\ref{ex:2}.}\label{fig_ex_2}
\end{figure}

The boundary distance of $u_1$ and $u_2$ is given by:
  \begin{equation*}
    d_{\boundary}(u_1)=\frac{1}{3},\quad
    d_{\boundary}(u_2)=\frac{1}{2}\,.    
  \end{equation*}

  Some easy computations show that
  \begin{equation*}
  \begin{aligned}
    \inflap d_{\boundary}(u_1)=1-2\Big(\frac{1}{2}-\frac{1}{3}\Big)=1-\frac{1}{3}\gneq 0\\
    \|\Opc{\grad d_{\boundary}}(u_1)\|_{\infty}-\Lambda_1 d_{\boundary}(u_1)=1- \frac{2}{3}\gneq 0\,.
  \end{aligned}
  \end{equation*}
   Thus $d_{\boundary}$ does not satisfy the $\infty$-limit eigenvalue equation \eqref{limitin_inf_eigenvalue_eq}, and in particular it can not be the limit of the first eigenfunctions of the $p$-Laplacian. 
   Additionally, it is possible to check that, besides $d_{\boundary}$, any function $f$ with 
   \begin{equation}
   f(u_1)\in\Big[0,\frac{1}{3}\Big],\qquad  f(u_2)=\frac{1}{2},
   \end{equation}
   is such that $f\in \argmin_{g\in \mathcal{H}_0(\nodeset)} \rayl_{\infty}(g)$. Moreover, it is easy to see that the function $f^*(u_1)=1/5$, $f^*(u_2)=1/2$  is the unique solution, up to nonzero constant multiplicative factors, of the $\infty$-limit eigenvalue  equation \eqref{limitin_inf_eigenvalue_eq} with $\Lambda=1/2$.
\end{example}
Next, we characterize all the graphs such that the boundary distance function is the unique minimizer of the $\infty$-Rayleigh quotient. In particular, this provides sufficient conditions for the boundary distance function to be the unique solution of the $\infty$-limit eigenvalue equation \eqref{limitin_inf_eigenvalue_eq}. Sufficient conditions for the boundary distance function to be the unique solution of the $\infty$-limit eigenvalue equation have been investigated also in the continuous setting in \cite{Yu}.

\begin{theorem}\label{thm_distance_unique_minimizer}
    Let $\Gc$ be a connected graph with boundary $\boundary$. Then the boundary distance function is the unique minimizer of $\rayl_\infty$, up to nonzero multiplicative constant factors, if and only if given $u_0\in \argmax_{u\in\internalnodes} d_{\boundary}(u)$ for any $v\in \internalnodes$ there exists some path $\gamma=\{u_0,u_1,\dots,u_n|\; u_n\in \boundary\}$ from $u_0$ to the boundary such that $u_i=v$ for some $1\leq i\leq n$ and $\length
(\gamma)=d_\boundary(u_0)$. 
\end{theorem}
\begin{proof}
We start proving that the condition on the paths is a sufficient condition for the uniqueness of the minimizer. Let $f\in \argmax_{g}\rayl_{\infty}(g)$ and, without loss of generality, assume 
\begin{equation*}
    \|\grad f\|_{\infty}=1  \quad \text{and} \quad f(u_0)=d_{\boundary}(u_0).    
\end{equation*}
Consider $v\in \internalnodes$. Since the graph is connected, there exists some path $\gamma'=\{u_0,u_1,\dots, u_n\}$ such that $u_n\in \boundary$, $v=u_k$ for some $k\in \{1,\dots, n-1\}$ and $\length(\gamma')=d_{\boundary}(u_0)$. In particular 
\begin{equation}
d_{\boundary}(v)=\length(\{u_k,\dots, u_n\})\quad  \text{and} \quad d(v,u_0)=\length(\{u_0,\dots, u_k\}).
\end{equation}
Thus, as a consequence of \eqref{eq1_optimal_paths_limiting_eigenfunctions}, necessarily $f(v)=d_{\boundary}(v)$. 

On the other hand, let 
\begin{equation}
    V^*=\left\{u\in \internalnodes\;\Bigg|\!\!\!\!\!\!\begin{array}{ll}&\exists \gamma=\{u_0,u_1,\dots, u_n\in \boundary\} \text{ with } d_{\boundary}(v)=\length(\gamma)\\
         &\text{ and } u=u_i \text{ for some } 1\leq i\leq n-1
    \end{array}\right\}\cup \boundary
\end{equation}    
be the set of all nodes of the graph supported on some shortest path from $u_0$ to the boundary. Then assume $V\setminus V^*$ to be nonempty and consider $f^*$ to be the solution of the following Dirichlet boundary problem
\begin{equation}
    \begin{cases}
        \inflap f^*(u)=0 \qquad \forall u\in \internalnodes\setminus V^*\\
        f^*(u)=d_{\boundary}(u) \qquad \forall u\in V^*\,.
    \end{cases}
\end{equation}
The existence and uniqueness of the solution to this problem is guaranteed from the discussion in \cite{manfredi2015nonlinear}.
Now we claim that $f^*\in \argmin_f \rayl_\infty(f)$ but $f^*\neq d_{\boundary}$, thus proving that the condition on the paths is a necessary condition to have uniqueness of the minimizer.

To prove the claim let $L=\max_{u\in \internalnodes\setminus V^*}\|\{\grad f^*\}(u)\|_{\infty}>0$. 
Then, necessarily from the condition $\inflap f(u)=0$, there exists a path $\gamma^*=\{v_1,\dots, v_m\}$ such that 
\begin{enumerate}
    \item $v_1, v_m\in V^*$,
    \item $v_i\in \internalnodes\setminus V^*$ for all $1<i<m$,
    \item  $\grad f^*(v_i,v_{i+1})=L$ for all $i=1,\dots m-1$.
\end{enumerate}  

  Note that $d_{\boundary}(v_m)\leq  d_{\boundary}(v_1)+d(v_1,v_m)\leq  d_{\boundary}(v_1)+\length(\gamma^*)$,
  Moreover, assume by contradiction that the last inequality was actually an equality. Then, consider the path $\gamma'$ obtained by concatenating the shortest path from the boundary to $v_1$ with $\gamma^*$ and then with the shortest path from $v_m$ to $u_0$. 
From the definition of $V^*$, $d_{\boundary}(u_0)=d(v_m,u_0)+d(v_m,\boundary)=d(v_1,u_0)+d(v_1,\boundary)$, which shows that $\gamma'$ is a shortest path from $u_0$ to the boundary, i.e.:
\begin{equation*}
\begin{aligned}
\length(\gamma')&=d_\boundary(u_0)=d(v_m,u_0)+\length(\gamma^*)+d_\boundary(v_1)\\
&=d(v_m,u_0)+d_\boundary(v_m)-d_\boundary(v_1)+d_\boundary(v_1)=d_\boundary(u_0).
\end{aligned}
\end{equation*} 
However, $\gamma'$ passes through nodes not included in $V^*$, yielding a contradiction.

Thus
  \begin{equation}\label{eq:11}
    d_{\boundary}(v_m)<  d_{\boundary}(v_1)+\length(\gamma^*),
  \end{equation}
yielding  
  \begin{equation*}
    \begin{aligned}
      L=\frac{f^*(v_m)-f^*(v_1)}{\length(\gamma^*)}=\frac{d_{\boundary}(v_m)-d_{\boundary}(v_1)}{\length(\gamma^*)}< 1
    \end{aligned}
  \end{equation*}

 It is easy to see that $f^*$ satisfies $\|\grad f^*\|_{\infty}=1$ and $\|f^*\|_{\infty}=\max_{v}d_{\boundary}(v)$. On the other hand, $\|\{\grad f^*\}(v)\|_{\infty}\leq L<1$ for all $u\in \internalnodes\setminus V^*$ showing that 
\begin{equation*}
    f^*\in \argmin_{g\in \mathcal{H}_0(\nodeset)}\rayl_{\infty}(g),
\end{equation*}
but $f^*\neq d_{\boundary}$.
Indeed it is easy to verify that the boundary distance function $d_{\boundary}$ satisfies the Eikonal equation 
\begin{equation*}
    \begin{cases}
        \|\{\grad f\}(u)\|_{\infty}=1 \qquad &\forall u\in \internalnodes,\\
        f(u)=0 \qquad &\forall u\in \boundary.
    \end{cases}
\end{equation*}
\end{proof}

We conclude this section with some illustrative examples where we investigate numerically the convergence of the $p$-Laplacian eigenpairs as $p$ goes to $\infty$.
\begin{figure}
  \begin{center}
    \begin{minipage}{.46\textwidth}    
      \includegraphics[width=\textwidth]{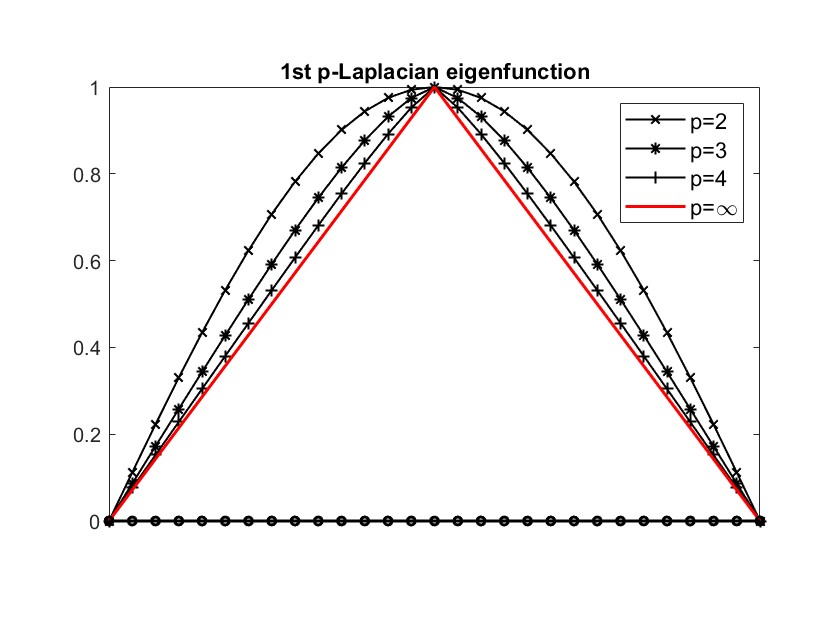}
      \includegraphics[width=\textwidth]{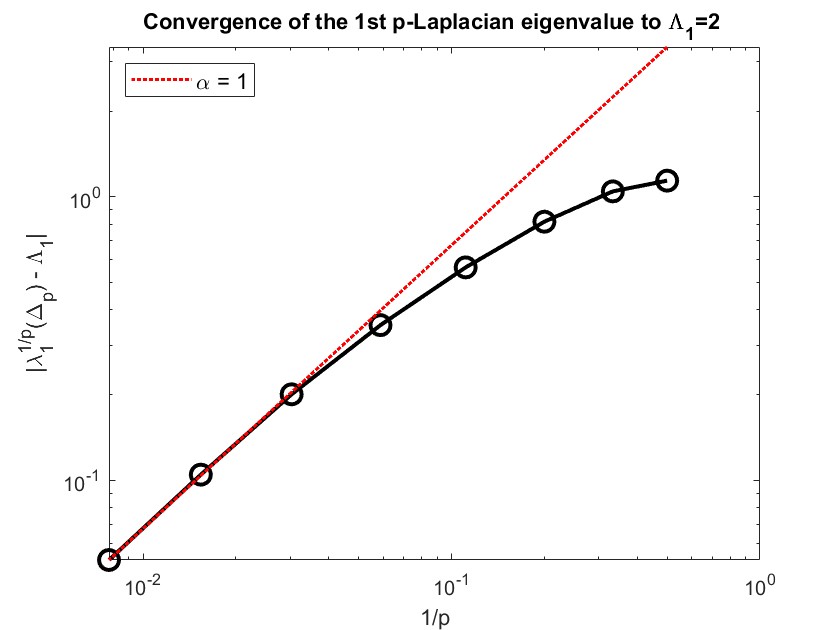}
      \includegraphics[width=\textwidth]{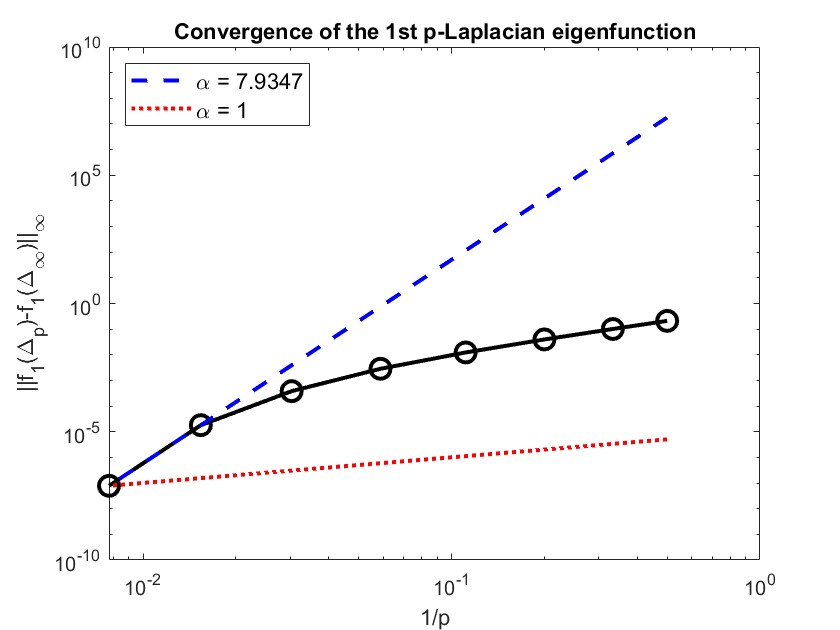}
    \end{minipage}
    \begin{minipage}{.46\textwidth}     
      \includegraphics[width=\textwidth]{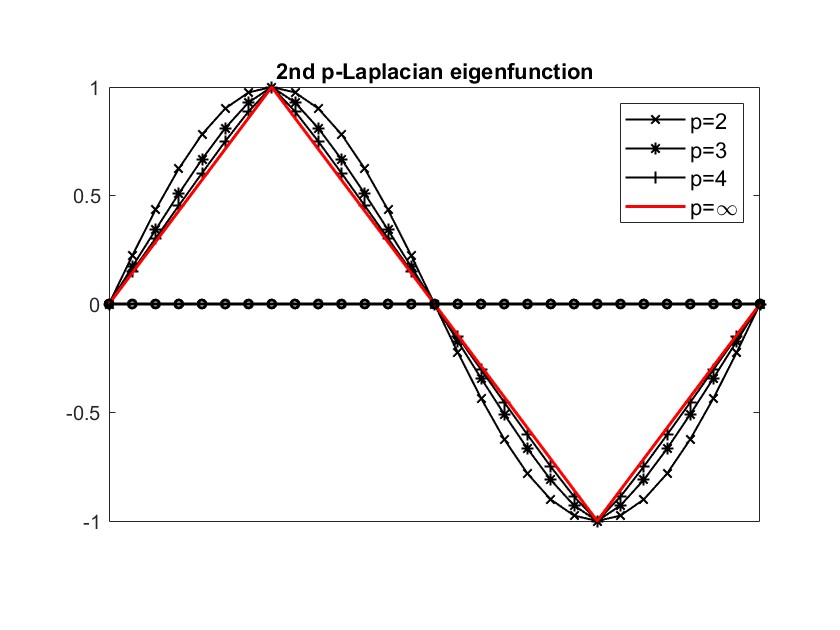}
      \includegraphics[width=\textwidth]{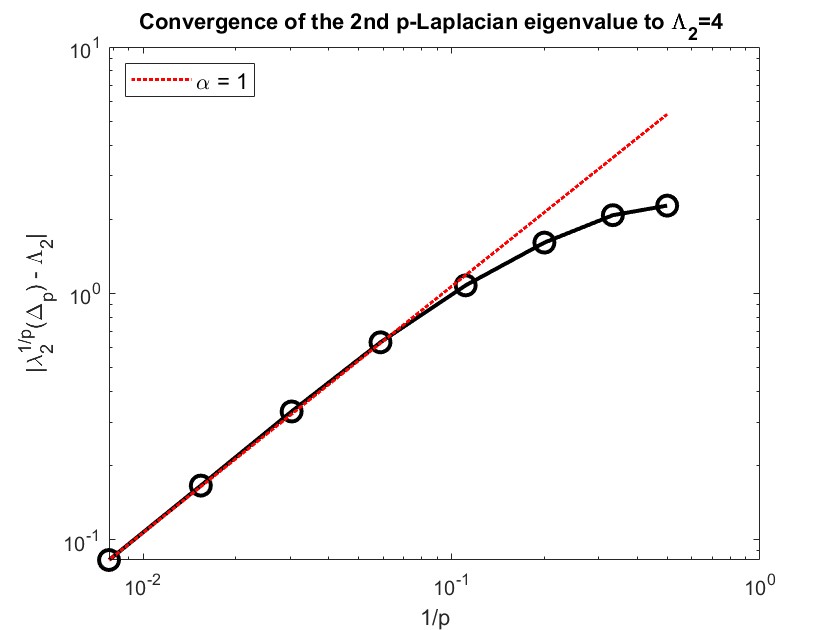}
     \includegraphics[width=\textwidth]{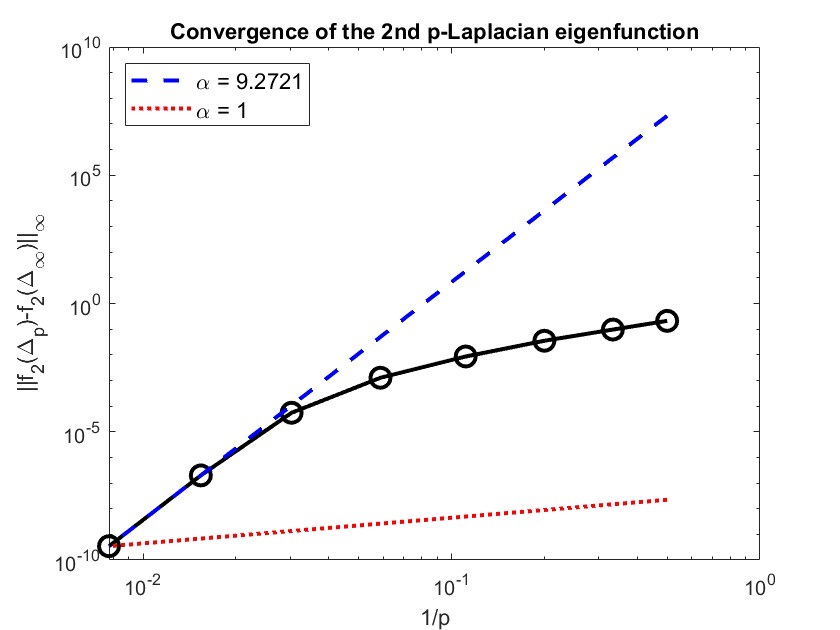}
    \end{minipage}
    \end{center}
\caption{Study of the convergence of the first two variational $p$-Laplacian eigenpairs to the $\infty$ ones on a line graph of 29 nodes with homogeneous Dirichlet boundary conditions on the extreme nodes.
First row left panel and right panel, respectively: interpolated representation of the first and second eigenfunction of the graph $p$-Laplacian for different values of $p$.  In second row, left and right panels, we report the absolute error between the $p$-root of the 1st and 2nd variational eigenvalues of the $p$-Laplacian $\lambda_1^{1/p}(\plap)$ ( $\lambda_2^{1/p}(\plap)$) and $\Lambda_1(\inflap)=2$ ($\Lambda_2(\inflap)=4$) as a function of $1/p$. In the third row, left and right panels, we report the errors between the  eigenfunctions (first and second) of the $p$-Laplacian and the corresponding infinite limit in \eqref{eq.12} as a function of $1/p$. Both the error plots of the $p$-eigenfunctions and $p$-eigenvalues are in loglog scale and are compared with the function $(1/p)^\alpha$ for different values of $\alpha$.}\label{Fig-Numerical-test_2}
\end{figure}

\begin{example}
  We consider two test cases. The first test case concerns a path graph of 29 equidistant nodes embedded in the interval $[0,1]$. The two extremal nodes are boundary nodes and the length of the edges is scaled according to the embedding in [0,1]. 
   In the second test case we address a grid of 15x15 equidistant nodes embedded in $[0,1]\times[0,1]$, with the edge lengths scaled accordingly to their length in the embedding. We set the nodes on the boundary of the square as the boundary of the graph.

  \Cref{Fig-Numerical-test_2} and \Cref{Fig-Numerical-test1} reports the results for the first and second test case respectively. We show the numerical convergence as $p\To\infty$ of the first, two and one respectively, variational eigenpairs (top row) and the estimated asymptotic convergence profiles with the estimated rates calculated as the slope of the least square line calculated on the last data points.

In the case of the line graph, we compute the first and second $p$-Laplacian eigenpairs for exponentially increasing values of $p$, precisely we set $p=2^l+1$ with $l=0,\dots,7$. In the second case of the grid we compute only the first eigenpair for the same values of $p$. The eigenpairs are computed using the method discussed in \cite{deidda2024_spec_energy}. 

We report illustrative representation of the computed eigenfunctions obtained by linear interpolation of the numerical values of the eigenfunctions at the nodes of the graph. Additionally, we report the errors between the computed eigenvalues and the limit $\infty$-Laplacian eigenvalues $\Lambda_1(\Delta_{\infty})$ and $\Lambda_2(\Delta_{\infty})$. Indeed we know that $\Lambda_1(\Delta_{\infty})=R_1^{-1}=2$, $\Lambda_1(\Delta_{\infty})=R_2^{-1}=4$ on the line graph and $\Lambda_1(\Delta_{\infty})=R_1^{-1}=2$ on the grid. 
Finally, we also include the error in $\infty$-norm between the computed eigenfunction and the expected limit eigenfunction, where all the eigenfunctions are normalized so to satisfy $\|f\|_{\infty}=1$. 
In particular, in the case of the line graph it can be easily observed that the two functions
\begin{equation}\label{eq.12}
    \begin{aligned}
        &f_1(\Delta_{\infty})(k)=f_1(\Delta_{\infty})(30-k)=(k-1)/14 \quad k=1,\dots, 15 \\
        &f_2(\Delta_{\infty})(k)=f_2(\Delta_{\infty})(16-k)=(k-1)/7 \quad k=1,\dots, 8 \\
        &f_2(\Delta_{\infty})(14+k)=f_2(\Delta_{\infty})(30-k)=-(k-1)/7 \quad k=1,\dots, 8
    \end{aligned}
\end{equation}
satisfy the $\infty$-limit eigenvalue equation in \eqref{limitin_inf_eigenvalue_eq} with eigenvalues $\Lambda_1=2$ and $\Lambda_2=4$, respectively.
On the other hand, in the case of the grid graph, the expected limit of the first eigenfunctions of the $p$-Laplacian is computed by solving numerically the $\infty$-limit eigenvalue equation \eqref{limitin_inf_eigenvalue_eq} with $\Lambda=2$. As suggested in \cite{BungertInfNumerical}, the limit eigenvalue equation is solved using the fixed point iteration:
\begin{equation*}
\begin{aligned}
    \tilde{f}^{(k)}=f^{(k-1)}-\delta\Theta_{\infty}(f^{(k-1)})\,, \\ f^{(k)}=\tilde{f}^{(k)}/\|\tilde{f}^{(k)}\|_{\infty},
\end{aligned}
\end{equation*}
where $\Theta_{\infty}(f)$ is defined in \eqref{limitin_inf_eigenvalue_eq} and $\delta$ is a fixed parameter smaller than $1$.
The starting point $f^{(0)}$ is chosen as the first eigenfunction of the $p$-Laplacian for some $p>2$, and convergence is considered achieved when $\Theta(f^{(k)})<\epsilon$ and $\|f^{(k)}-f^{(k-1)}\|_{\infty}<\epsilon$ for some small value of $\epsilon$.
The convergence rate is compared with the rate $1/p^{\alpha}$ for different values of $\alpha$, we recall that these appear as lines of slope $\alpha$ in loglog scale plottings.

In agreement with \eqref{eq_monotonocity_variational_eigenvalue}, all experiments suggest that the eigenvalues converge sublinearly as $O(1/p)$. On the contrary, we observe a higher convergence rate for the eigenfunctions. However, it is worth mentioning that, especially for large values of $p$, we have little control over the accuracy of the computed p-Laplacian eigenfunctions.
\begin{figure}
  \begin{center}
    \begin{minipage}{.4\textwidth}    
      \includegraphics[width=\textwidth]{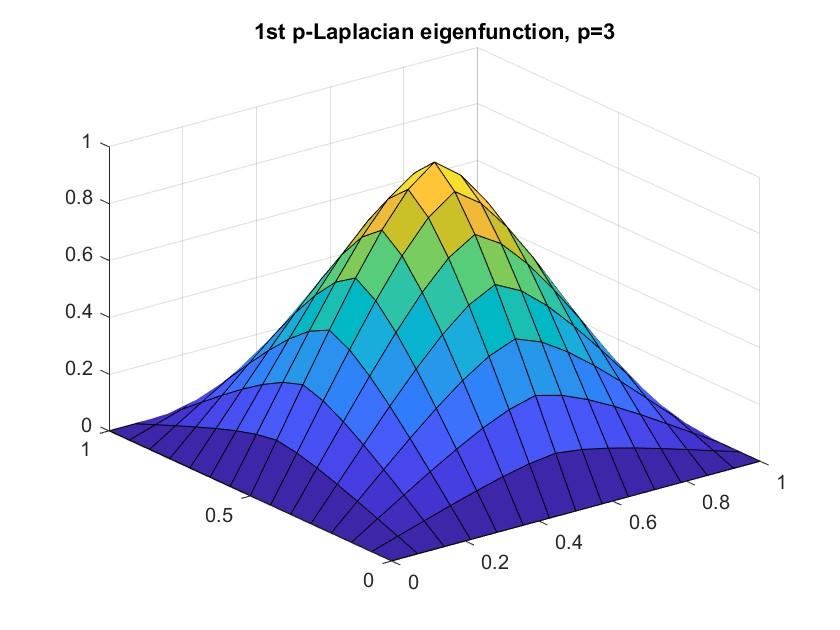}

      \includegraphics[width=\textwidth]{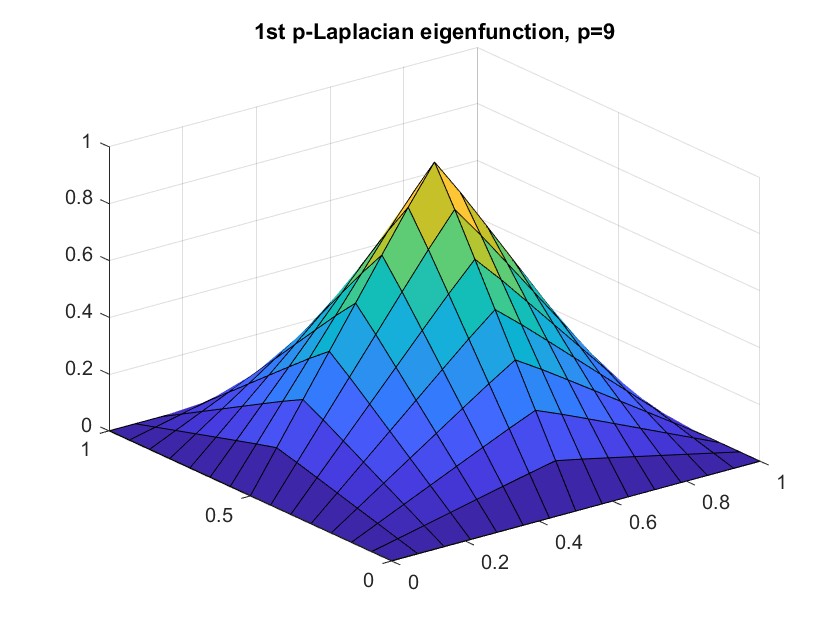}
    \end{minipage}
    \qquad
    \begin{minipage}{.4\textwidth}     
      \includegraphics[width=\textwidth]{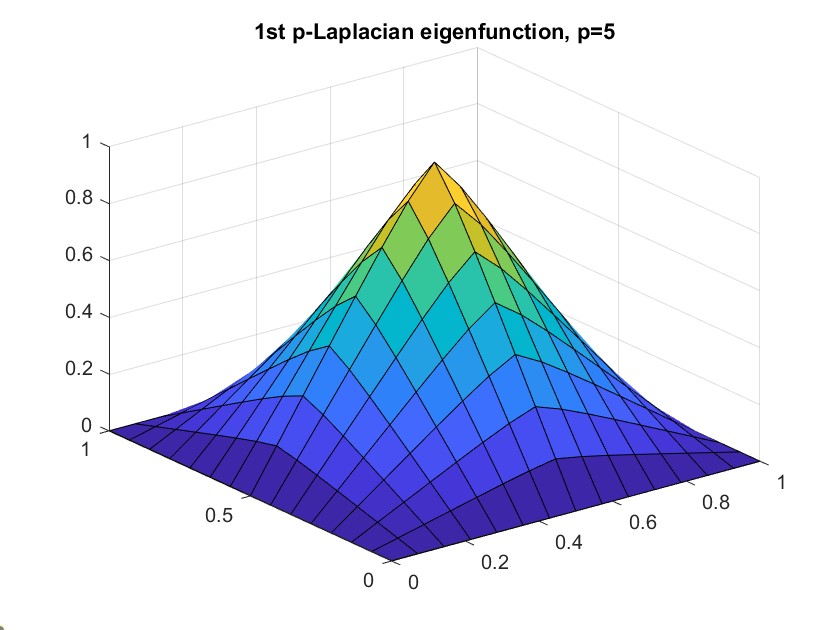}

      \includegraphics[width=\textwidth]{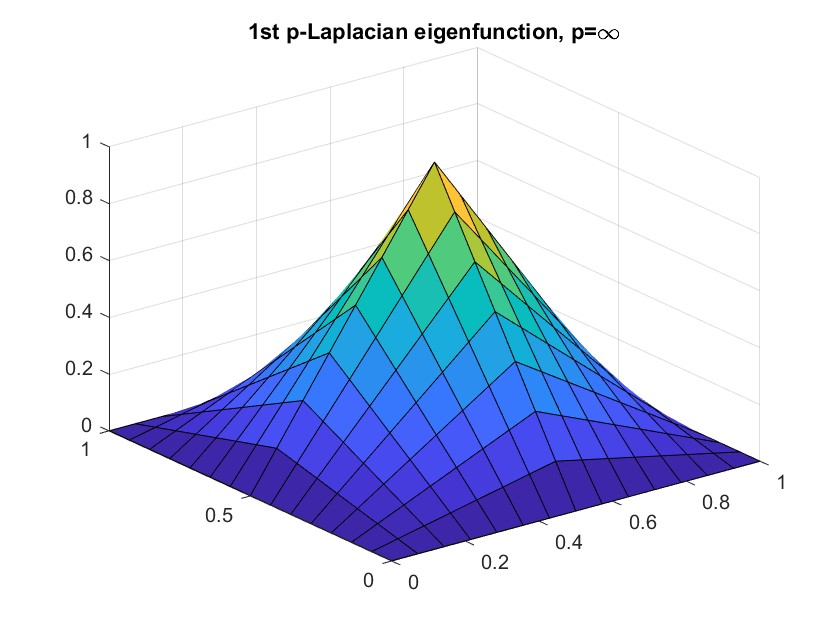}
    \end{minipage}\\[1em]
    \includegraphics[width=0.46\textwidth]{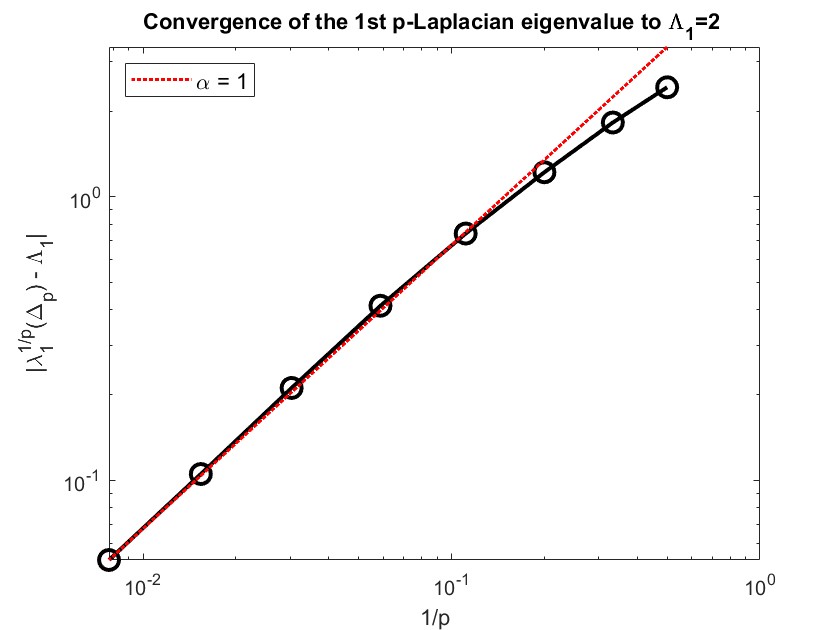}
    \includegraphics[width=0.46\textwidth]{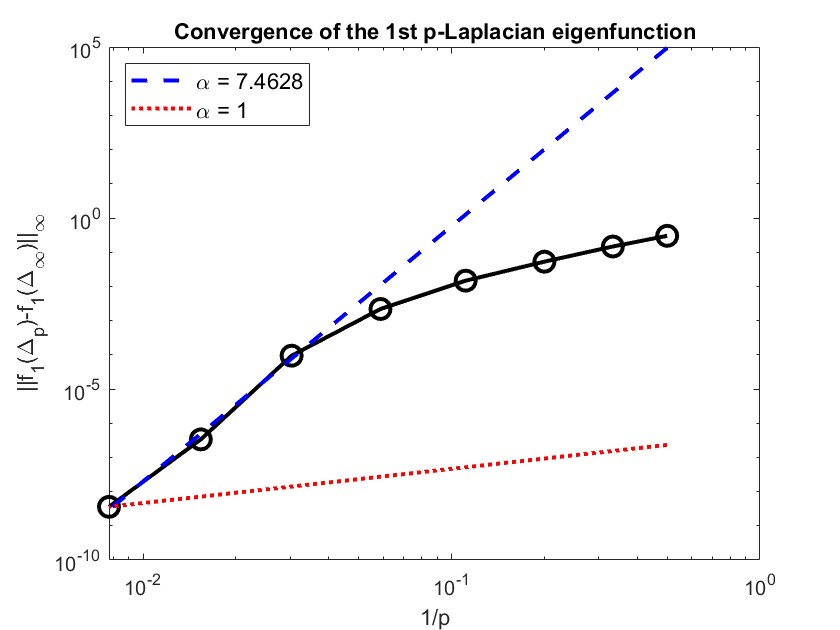}
    \end{center}
\caption{Study of the convergence of the first eigenpair of the $p$-Laplacian to the limit $\infty$-eigenpair.
First and second row from left to right and top to bottom: first eigenfunction of the graph $p$-Laplacian for  $p=3,5,9$. The last panel shows the expected limit eigenfunction obtained by solving the $\infty$-limit eigenvalue equation \eqref{limitin_inf_eigenvalue_eq} with $\Lambda=2$. The left panel in the third row reports the  absolute error between the $p$-th root of the 1st eigenvalue of the $p$-Laplacian $\lambda_1^{1/p}(\plap)$ and $\Lambda_1(\inflap)=2$. The right panel of third row reports the error between the 1st eigenfunction of the $p$-Laplacian and the expected limit eigenfunction. All the errors are plotted as functions of $1/p$. Both the error plots of the $p$-eigenfunctions and $p$-eigenvalues are in loglog scale and are compared with the function $(1/p)^\alpha$ for different values of $\alpha$.}
\label{Fig-Numerical-test1}
\end{figure}

\end{example}

\end{NEW}

%



\section{The generalized $\infty$-eigenvalue problem}\label{sec_inf_2}
In this section we discuss a different approach to the $\infty$-Laplacian eigenvalue problem. It is based on the definition of the generalized critical points of a non differentiable functional. The same approach has been deeply investiagted in the study of the graph $1$-Laplacian eigenpairs \cite{chang2016spectrum, hein2010inverse}. 
More in detail, in the first paragraph, we first recall the definition of the generalized eigenpairs and the corresponding $\infty$-variational eigenvalues. Then, by means of the subgradients of the convex functions $f\mapsto \|f\|_{\infty}, \|\grad f\|_{\infty}$, we establish an explicit algebraic equation satisfied by a generalized eigenpair.
In the subsequent paragraphs we prove a topological characterization of the geometrical $\infty$-eigenpairs similar to the one proved in Proposition \ref{optimal_paths_limiting_eigenfunctions}. This characterization allows us to compare the two different formulations of the $\infty$-eignepairs.
In particular, we prove that any solution of the equation in \cref{limitin_infty_eigenvalue_theorem} is a generalized eigenpair, while the generalized eigenpairs solve the limit equation in \cref{limitin_infty_eigenvalue_theorem} if we consider a proper subgraph of $\Gc$.
The generalized $\infty$-eigenpairs are formulated via the generalized ``critical points'' of the $\infty$-Rayleigh quotient given by: 
\begin{equation*}
  \rayl_{\infty}(f)=\frac{\|\grad f\|_{\infty}}{\|f\|_{\infty}}\,.
\end{equation*}
Indeeed, the $p$-Laplacian eigenpairs can be seen as critical points and values of the functional $\|\grad f\|_p$ on the manifold $\|f\|_p=1$, i.e. the points in which the differential of $\|\grad f\|_p$ is normal to $S_p=\{f:\internalnodes\rightarrow \R|\; \|f\|_p=1\}$.
Letting $p\rightarrow\infty$, we observe immediately that $\|\grad f\|_{\infty}$ is not a differentiable operator and $S_{\infty}$ is not a smooth manifold. Nevertheless, following~\cite{chang2021nonsmooth}, since $\|\grad f\|_{\infty}$ and $\|f\|_{\infty}$ are convex functions, we can generalize the notion of critical points as follows:
\begin{definition}\label{Generalized_eigenpair}
  We say that $(f,\Lambda)$ is a generalized $\infty$-eigenpair if and only if:
  \begin{equation*}
    0\in\partial\|\grad f\|_{\infty}\cap\Lambda\partial\|f\|_{\infty}\,.
  \end{equation*}
  where $\partial\|\grad f\|_{\infty}$ and $\partial\|f\|_{\infty}$ are the subgradients of the functions $(f\mapsto\|\grad f\|_{\infty})$ and $(f\mapsto\|f\|_{\infty})$, respectively.
\end{definition}
From~\cite{chang2021nonsmooth, deidda2025nonlinear}, we observe that Definition~\ref{Generalized_eigenpair} can be considered as the generalized critical point equation for the functional $(f\mapsto\|\grad f\|_{\infty})$ on $S_{\infty}$ since $\partial\|\grad f\|_{\infty}$ is a generalization of the differential of $\|\grad f\|_p$ when $p=\infty$, while, from Lemma 4.2 and 4.3 of \cite{chang2021nonsmooth}, (see also \cite{rockafellar2015convex}) the external cone to $S_{\infty}$ in a point $f$, i.e.,
\begin{equation*}
  C_{Ext}(f)=\{\xi:\nodeset\rightarrow\R|\:
     \SCAL{\xi}{g-f} \leq 0\;\forall g\in S_{\infty}\}
\end{equation*}
can be characterized as  
\begin{equation*}
  C_{Ext}(f)=\mathop{\cup}_{\lambda\geq 0}\lambda\partial\|f\|_{\infty}\,.
\end{equation*}
Moreover we point out that, from Theorem 5.8 of \cite{chang2021nonsmooth}, $\Lambda_k(\Delta_\infty)$ defined in \eqref{def_inf_var_eigenvalues} corresponds to the $k$-th Krasnoselskii $\infty$-variational eigenvalue, and it is a generalized critical value of $\rayl_{\infty}$ as in \cref{Generalized_eigenpair}.

Similar approaches have been used to study the $1$-Laplacian eigenpairs \cite{chang2016spectrum, hein2010inverse}, and more recently to study minimizers of $\|\grad f\|_{\infty}$ in $L^2$ and $L^\infty$ spaces, \cite{BungertInfinityL2, bungert2021eigenvalue}.
\new{ In particular, from \eqref{eq_monotonocity_variational_eigenvalue} and \cite{deidda2025nonlinear} we have that the $\infty$-variational eigenvalues are equal to the limit of the $p$-Laplacian variational eigenvalues as $p$ goes to $\infty$.
 Moreover, \cref{thm:packing-radii} and \cref{Variational_eienvalues_radius_estimates} can be reformulated in terms of inequalities relating the infinity variational eigenvalues and the packing radii of the graph as in the following Theorem.}
\begin{theorem}\label{Clarke_variational_eigen_characterization}
  Let $\{\Lambda_k\}$ be the Krasnoselskii variational eigenvalues and $R_k$ be the $k$-th packing radius defined in Definition \ref{k-th_radius}.
  Then,
  \begin{equation*}
    \Lambda_k\leq \frac{1}{R_k}\qquad\forall k=1,\dots,|\internalnodes|
  \end{equation*}
  and equality holds for $k=1,2$.
\end{theorem}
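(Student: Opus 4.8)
The plan is to establish the upper bound $\Lambda_k^{\Kc}\le 1/R_k$ by exhibiting an explicit admissible set in $\Fc_k(S_\infty)$ on which the $\infty$-Rayleigh quotient does not exceed $1/R_k$, and then to obtain the matching lower bound for $k=1,2$ from the results already proved for the limit eigenvalues together with a monotonicity/comparison argument between the two variational families. For the upper bound, I would re-use the construction from the proof of Proposition~\ref{Variational_eienvalues_radius_estimates}: pick $u_1,\dots,u_k\in\internalnodes$ realizing the $k$-th packing radius $R_k$ (so $d(u_i,u_j)\ge 2R_k$ and $d_{\boundary}(u_i)\ge R_k$), set $f_i(u)=\max\{R_k-d(u,u_i),0\}$, and let $A_k=\Span\{f_i\}_{i=1}^k\cap S_\infty$. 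Since the $f_i$ have pairwise disjoint supports (the balls of radius $R_k$ around the $u_i$ are disjoint) they are linearly independent, so $A_k$ is a centrally symmetric set homeomorphic to a $(k-1)$-sphere and $\gamma(A_k)=k$, hence $A_k\in\Fc_k(S_\infty)$. The key estimate, exactly as in the three cases examined in the proof of Proposition~\ref{Variational_eienvalues_radius_estimates}, is that for any $f=\sum_i\alpha_i f_i$ one has $\edgelength_{uv}|f(u)-f(v)|\le \max_i|\alpha_i|$ for every edge $(u,v)$, because each $f_i$ is $1$-Lipschitz with respect to the graph distance $d$ and the disjoint-support/triangle-inequality bookkeeping controls the mixed terms. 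Therefore $\|\grad f\|_\infty\le\max_i|\alpha_i|$, while $\|f\|_\infty\ge\max_i|\alpha_i f_i(u_i)|=R_k\max_i|\alpha_i|$ because $f_i(u_i)=R_k$ and the supports are disjoint; dividing gives $\rayl_\infty(f)\le 1/R_k$ for all $f\in A_k$, whence $\Lambda_k^{\Kc}\le\max_{f\in A_k}\rayl_\infty(f)\le 1/R_k$.

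For the reverse inequality when $k=1,2$, for $k=1$ note $\Lambda_1^{\Kc}=\min_{f\in\Hc_0(\nodeset)}\rayl_\infty(f)$, which by Corollary~\ref{First_infinity_eigenvalue} equals $1/R_1$; this already forces equality. For $k=2$ the strategy is to compare $\Lambda_2^{\Kc}$ with the limit eigenvalue $\Lambda_2=1/R_2$ from Theorem~\ref{thm:packing-radii}. The natural route is a $\Gamma$-type/lower-semicontinuity argument: the $p$-Rayleigh quotients $\rayl_p$ converge to $\rayl_\infty$ in an appropriate sense on the (finite-dimensional, hence locally compact) sphere, and a minimax value indexed by Krasnoselskii genus is stable under such convergence, so $\Lambda_2^{\Kc}\ge\liminf_{p\to\infty}\lambda_2(\plap)^{1/p}\ \big(=\Lambda_2\big)$; combined with the upper bound this yields $\Lambda_2^{\Kc}=1/R_2$. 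Alternatively, and perhaps more self-containedly, one can argue directly: any $A\in\Fc_2(S_\infty)$ with $\gamma(A)\ge 2$ must, by a Borsuk--Ulam/connectedness argument, contain a function $f$ that changes sign, hence induces at least two nodal domains; then a purely combinatorial estimate analogous to Corollary~\ref{Corollary_lower_bound_radius_number_of_nodal_domains} — namely that a sign-changing $1$-Lipschitz-type competitor on a graph of packing radius $R_2$ has $\rayl_\infty$-value at least $1/R_2$ — gives $\max_{f\in A}\rayl_\infty(f)\ge 1/R_2$, and taking the infimum over $A$ finishes the proof.

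The main obstacle I expect is the lower bound for $k=2$: while the upper bound is a direct transcription of an argument already in the paper, showing $\Lambda_2^{\Kc}\ge 1/R_2$ requires either (i) a careful justification that the minimax construction is stable in the $p\to\infty$ limit — which involves checking that a genus-$k$ family in $S_p$ can be deformed into/approximated by one in $S_\infty$ and vice versa, using that the genus is defined via continuous odd maps and that $S_p\to S_\infty$ in Hausdorff distance — or (ii) a direct proof that every sign-changing admissible competitor ``sees'' two disjoint balls of radius $R_2$, which is essentially the content of Theorem~\ref{Thm_Nodal_domains_balls_inscribed} but must now be run without assuming the eigenvalue equation~\eqref{limitin_inf_eigenvalue_eq}, relying instead only on the geometry of sublevel sets of $\|\grad f\|_\infty$. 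I would pursue route (ii), extracting from a genus-$\ge 2$ set a sign-changing $f$, passing to the two nodal domains $A^+,A^-$, and using the Lipschitz estimate~\eqref{Lipschitzianity} together with the packing-radius definition to bound $\rayl_\infty(f)$ from below by $1/R_2$.
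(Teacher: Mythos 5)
Your upper bound and your $k=1$ argument coincide with the paper's proof (same test set $A_k=\Span\{f_i\}$ with $f_i(u)=\max\{R_k-d(u,u_i),0\}$, same reduction of $k=1$ to Corollary~\ref{First_infinity_eigenvalue}). The problem is the lower bound for $k=2$, where the route you commit to has a genuine gap. You propose to extract from any $A$ with $\gamma(A)\ge 2$ a \emph{sign-changing} $f$ and then bound $\rayl_\infty(f)\ge 1/R_2$ from the Lipschitz estimate~\eqref{Lipschitzianity} and the packing-radius definition. But sign-changing is not enough: the negative nodal domain may be arbitrarily shallow. Concretely, on a path graph with boundary at both ends take $f$ equal to the boundary distance function perturbed to a small negative value at one node adjacent to $\boundary$; then $f$ changes sign, yet $\rayl_\infty(f)\approx 1/R_1<1/R_2$. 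The Lipschitz estimate only yields $d(u^+,u^-)\ge\bigl(\|f^+\|_\infty+\|f^-\|_\infty\bigr)/\|\grad f\|_\infty$ and $d(u^-,\boundary)\ge\|f^-\|_\infty/\|\grad f\|_\infty$, which do not produce two balls of the \emph{same} radius $\|f\|_\infty/\|\grad f\|_\infty$ unless $\|f^+\|_\infty=\|f^-\|_\infty$. This is also why you cannot simply invoke an analogue of Corollary~\ref{Corollary_lower_bound_radius_number_of_nodal_domains}: that corollary rests on the eigenvalue equation via Theorem~\ref{Thm_Nodal_domains_balls_inscribed}, which forces the extremal structure that an arbitrary competitor lacks.

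The missing idea, which is exactly what the paper uses, is to apply the genus hypothesis not merely to produce a sign change but to produce a \emph{balanced} competitor: the map $\psi(f)=\|f^+\|_\infty-\|f^-\|_\infty$ is continuous and odd, so if $\psi$ never vanished on $A$ it would be a continuous odd map $A\to\R\setminus\{0\}$, forcing $\gamma(A)\le 1$. Hence every $A\in\mathcal{F}_2(S_\infty)$ contains $f_A$ with $\|f_A^+\|_\infty=\|f_A^-\|_\infty=\|f_A\|_\infty$, i.e.\ points $u^+,u^-$ with $f_A(u^\pm)=\pm\|f_A\|_\infty$. Only then does~\eqref{Lipschitzianity} give $d(u^\pm,\boundary)\ge\|f_A\|_\infty/\|\grad f_A\|_\infty$ and $d(u^+,u^-)\ge 2\|f_A\|_\infty/\|\grad f_A\|_\infty$, whence $\|f_A\|_\infty/\|\grad f_A\|_\infty\le R_2$ by Definition~\ref{k-th_radius} and $\Lambda_2^{\Kc}\ge 1/R_2$. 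Your alternative route (i), via stability of the genus minimax under $p\to\infty$, is not developed in the paper and would require substantially more work than the one-line oddness argument above; I would drop it.
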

\begin{proof} 
\new{The proof follows trivially from \cref{thm:packing-radii} and \cref{Variational_eienvalues_radius_estimates}. However, the theorem can also be proved without necessarily using the fact that the $\infty$-variational eigenvalues are the limit of the $p$-Laplacian variational ones. For completeness we incluse also this alternative proof. First of all, observe that the inequality can be proved just following the proof of Prop \ref{Variational_eienvalues_radius_estimates}.}
 I.e, consider $u_1,\dots,u_k$ such that $d(u_i,u_j)\geq 2r, \; d(u_i,B)\geq r\;\: \forall i,j=1,\dots,k$ and define the $k$ linearly independent functions
  \begin{equation*}
    f_j(u)=\max\lbrace R_k-d(u,u_j),0\rbrace\,, \quad j=1,\dots,k\,.
  \end{equation*}
  Then the set $A_k=span\lbrace f_j \rbrace_{j=1}^k$ has Krasnoselskii genus equal to $k$, $\gamma(A_k)=k$, and thus:
  \begin{equation*}
    \Lambda_k\leq \max_{f\in A_k}\rayl_{\infty}(f)\,,
  \end{equation*}
  from which the subsequent inequality readily follows $\max_{f\in A_k}\rayl_{\infty}(f)\leq 1/R_k$.

  \new{ Similarly, also equalities $\Lambda_1= R_1^{-1}$ and $\Lambda_2= R_2^{-1}$ can be proved without necessarily using the properties of the limit $p$-Laplacian eigenpairs. Indeed, because of Corollary \ref{First_infinity_eigenvalue}, we immediately have $\Lambda_1= R_1^{-1}$. Moreover, any $A\in\mathcal{F}_2(S_{\infty})$ necessarily contains a symmetric, closed, and connected subset of $S_\infty$.}   
  Then, given the function 
$\psi\in C(S_{\infty}, \R)$ mapping any function $f$ to $\|f^+\|_{\infty}-\|f^-\|_{\infty}$, we have that
  for any $A\in\mathcal{F}_2(S_{\infty})$ there esists $f_A\in A$ such that $\psi(f_A)=0$. In other words, there exist $u^+,u^-\in\nodeset$ such that 
   $\|f\|_{\infty}=f_A(u^+)=-f_A(u^-)$.
 To conclude, from \eqref{Lipschitzianity}, we observe that 
  \begin{equation*}
   d(u^\pm,\boundary)\geq \frac{\|f_A\|_{\infty}}{\|\grad f_A\|_{\infty}}\,,
    \qquad 
    d(u^+,u^-)\geq 2\frac{\|f_A\|_{\infty}}{\|\grad f_A\|_{\infty}}\,,
  \end{equation*}
  i.e. $\|f_A\|_{\infty}/\|\grad f_A\|_{\infty}\leq R_2$, yielding $\Lambda_2^\geq \min_{A\in\mathcal{F}_2} \|\grad f_A\|_{\infty}/\|f_A\|_{\infty}\geq 1/R_2\,.$
\end{proof}

\begin{NEW}
\begin{remark}
We point out that generally the number of nodal domains of a generalized variational eigefunction $f_k$ can not be used to establish a lower bound for the corresponding eigenvalue in terms of some packing radius as in \cref{cor:lower-upper}. This is due to the fact that generalized eigenfunctions are “less"-rigid than solutions of the $\infty$-limit eigenvalue equation and so they can oscillate arbitrarily far from the nodes and edges of the graph where $|\grad f|$ and $|f|$ are maximal. We refer to \cite{deidda2025nonlinear} and, in particular to Example 6.12 therein for more details. However a lower bound like the one in \cref{cor:lower-upper} can be recovered by considering the number of perfect nodal domains induced by $f_k$, where a perfect nodal domain is a nodal domain where some node $u$ exists such that $|f(u)|=\|f\|_{\infty}$, see Theorem 8.6 in \cite{deidda2025nonlinear}.
\end{remark}
\end{NEW}


Next, we study the structure of the two sets $\partial\|\grad f\|_{\infty}$ and $\partial\|f\|_{\infty}$. 
As in remark \ref{Remark_boundary_conditions}, we think about both $f\mapsto\|f\|_{\infty}$ and $f\mapsto\|\grad f\|_{\infty}$ as functions from $\mathcal{H}(\internalnodes)$ to $\R$. 
Then, as done in \cite{BungertInfinityL2}, because of homogeneity, it is easy to derive the following characterization, (see  also \cite{burger2016spectral, deidda2025nonlinear}):
\begin{equation*}
    \partial\big(f\mapsto\|f\|_{\infty}\big)=\{\xi:\internalnodes\to \R\,|\, \|g\|_{\infty}\geq \SCAL{\xi}{g}\,\forall g:\internalnodes\to\R,\;\: \|f\|_{\infty}= \SCAL{\xi}{f}  \}\,,
\end{equation*}
i.e.
\begin{equation}\label{Subdifferetnial_characterization_2}
  \partial\|f\|_{\infty}:=\left\{ \xi \;\bigg|
    \begin{array}{lr}
      \|\xi\|_{1,\nodeset}=1,\; \xi(u)=0\;\;
      \forall u\in\internalnodes\setminus\nodemaxset(f)\,,\\[.4em] 
      |\,\xi(u)|\, |f(u)|=\xi(u) f(u)\;\; \forall u\in\nodemaxset(f)
    \end{array}
  \right\}\,.
\end{equation}
Moreover, we can use the subdifferential chain rule for linear transformations $\Big(\partial \big(x\mapsto \phi(Ax)\big)=A^T\partial \big(y\mapsto \phi(y)\big)|_{y=Ax}\Big)$ to characterize $\partial(f\mapsto \|\grad f\|_{\infty})$~\cite{rockafellar2015convex}:
\begin{equation}\label{Subdifferetnial_characterization_1}
  \begin{aligned}
    \partial\|\grad f\|_{\infty}:=\left\{ -\divg \Xi\, \bigg|\!\!
    \begin{array}{lr}
      \|\Xi\|_{1,\edgeset}=1,\; \Xi(u,v)=0\;\;
      \forall(u,v)\in \edgeset\setminus\edgemaxset(f)\,,\\[.4em]
      |\,\Xi(u,v)\,| |\grad f(u,v)|=\Xi(u,v)\grad f(u,v)\;
      \forall (u,v)\in\edgemaxset(f)
    \end{array}
    \!\!\!\right\}
  \end{aligned}
\end{equation}
where the divergence operator in defined as in Remark \ref{Remark_boundary_conditions},
and we used the edge and node norms defined in~\eqref{eq:norms}.
Then we can give the following definition of generalized $\infty$-eigenpair.
\begin{definition}\label{Generealized_eigenpair_equation}
  $(f,\Lambda)$ is a generalized $\infty$-eigenpair if and only if there exist $\xi\in\partial\|f\|_{\infty}$ and $\Xi$ with $-\divg(\Xi)\in\partial\|\grad f\|_{\infty}$ such that
  \begin{equation*}
    -\divg\,\Xi (u)=\Lambda \xi(u) \quad \forall u\in \internalnodes\,. 
  \end{equation*}
\end{definition}
Note that, putting together the above definition of generalized eigenpair and the characterization of the subgradients $\partial\|f\|_{\infty}$  and $\partial\|\grad f\|_{\infty}$ (eqs.~\eqref{Subdifferetnial_characterization_2} and~\eqref{Subdifferetnial_characterization_1}), we have the following Proposition, whose proof is immediate.
\begin{proposition}\label{Prop_eq_characterization_of_gen_inf_eigenpairs}
  $(f,\Lambda)$ with $f\in\mathcal{H}_{0}(\nodeset)$, is a generalized $\infty$-eigenpair if and only if there exist $\xi:\internalnodes\rightarrow\R$ and $\Xi:\edgeset\rightarrow\R$ such that
  \begin{equation}\label{Generealized_eigenpair_system}
    \begin{cases}
      -\divg \Xi (u)=\Lambda \xi(u) \quad &\forall u\in\internalnodes\\ 
      \|\Xi\|_{1,\edgeset}=1\\ 
      \|\xi\|_{1,\nodeset}=1\\
      |f(u)|=\|f\|_{\infty}\quad &if\;\xi(u)\neq 0\\
      |\grad f(u,v)|=\|\grad f\|_{\infty}\quad &if\;\Xi(u,v)\neq 0\\
      \mathrm{sign}\big(\Xi(u,v)\big)=\mathrm{sign}\big(\grad f(u,v)\big)
      \qquad &if\;\Xi(u,v)\neq 0\\
      \mathrm{sign\big(\xi(u)\big)}=\mathrm{sign\big(f(u)\big)}
      \quad &if\;\xi(u)\neq 0
    \end{cases}\,.
  \end{equation}
  Moreover, up to redefining $\Xi(u,v)=\Big(\Xi(u,v)-\Xi(v,u)\Big)/2$, we can assume $\Xi(u,v)=-\Xi(v,u)$.
\end{proposition}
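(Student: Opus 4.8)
The plan is to derive the system \eqref{Generealized_eigenpair_system} directly by unpacking Definition~\ref{Generealized_eigenpair_equation} together with the two subgradient characterizations \eqref{Subdifferetnial_characterization_2} and \eqref{Subdifferetnial_characterization_1}. Indeed, by Definition~\ref{Generealized_eigenpair_equation}, $(f,\Lambda)$ is a generalized $\infty$-eigenpair precisely when there exist $\xi\in\partial\|f\|_\infty$ and $\Xi$ with $-\divg\Xi\in\partial\|\grad f\|_\infty$ satisfying $-\divg\Xi(u)=\Lambda\xi(u)$ for all $u\in\internalnodes$; this is the first equation of \eqref{Generealized_eigenpair_system}. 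It then suffices to rewrite the membership conditions $\xi\in\partial\|f\|_\infty$ and $-\divg\Xi\in\partial\|\grad f\|_\infty$ in the elementary termwise form appearing in the remaining lines of \eqref{Generealized_eigenpair_system}.

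First I would address the node part. By \eqref{Subdifferetnial_characterization_2}, $\xi\in\partial\|f\|_\infty$ means $\|\xi\|_{1,\nodeset}=1$, that $\xi(u)=0$ for $u\notin\nodemaxset(f)$, and that $|\xi(u)||f(u)|=\xi(u)f(u)$ on $\nodemaxset(f)$. The condition $\xi(u)=0$ off $\nodemaxset(f)$ is the contrapositive of the implication ``$\xi(u)\neq 0\Rightarrow |f(u)|=\|f\|_\infty$'' (recalling $\nodemaxset(f)$ is exactly the set where $|f(u)|=\|f\|_\infty$, as defined earlier), and the identity $|\xi(u)||f(u)|=\xi(u)f(u)$ says $\xi(u)$ and $f(u)$ have the same sign whenever both are nonzero — which, since $\xi(u)\neq 0$ forces $f(u)\neq 0$ here, is exactly the last line of \eqref{Generealized_eigenpair_system}. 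The edge part is entirely analogous using \eqref{Subdifferetnial_characterization_1}: writing $-\divg\Xi\in\partial\|\grad f\|_\infty$ unpacks to $\|\Xi\|_{1,\edgeset}=1$, $\Xi(u,v)=0$ for $(u,v)\notin\edgemaxset(f)$ (equivalently $\Xi(u,v)\neq 0\Rightarrow|\grad f(u,v)|=\|\grad f\|_\infty$), and $|\Xi(u,v)||\grad f(u,v)|=\Xi(u,v)\grad f(u,v)$ on $\edgemaxset(f)$, which is the sign condition $\mathrm{sign}(\Xi(u,v))=\mathrm{sign}(\grad f(u,v))$ whenever $\Xi(u,v)\neq 0$. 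Collecting these gives all seven lines of \eqref{Generealized_eigenpair_system}, and conversely any $\xi,\Xi$ solving the system clearly lie in the respective subgradients, so the equivalence holds in both directions.

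For the final symmetrization remark, I would argue as follows. Given a valid $\Xi$, set $\widetilde\Xi(u,v)=(\Xi(u,v)-\Xi(v,u))/2$, which is antisymmetric by construction. Since $\grad f(u,v)=-\grad f(v,u)$, the edge function $\Xi'(u,v):=\Xi(v,u)$ satisfies $\mathrm{sign}(\Xi'(u,v))=\mathrm{sign}(\grad f(u,v))$ whenever $\Xi'(u,v)\neq 0$ and is supported on $\edgemaxset(f)$ as well, so on each edge where it is nonzero $\widetilde\Xi$ is a convex combination of two numbers of the sign of $\grad f(u,v)$, hence has that sign; its support is still contained in $\edgemaxset(f)$; and one checks $\|\widetilde\Xi\|_{1,\edgeset}\le\|\Xi\|_{1,\edgeset}=1$, with the reverse bound following from the fact that $\divg\widetilde\Xi=\divg\Xi$ (the divergence formula \eqref{Divergence-def} already antisymmetrizes its argument), so the normalization $\|\widetilde\Xi\|_{1,\edgeset}=1$ can be restored, if needed, by rescaling $\widetilde\Xi$ and $\xi$ simultaneously; since $\divg$ is unchanged, the eigenpair relation persists. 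I do not expect a genuine obstacle here — the statement is essentially a dictionary translation — so the only care needed is bookkeeping the sign/support conditions and confirming that antisymmetrizing $\Xi$ leaves $\divg\Xi$, and hence the eigenvalue equation, untouched while not increasing the $1$-norm.
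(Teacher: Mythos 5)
Your proposal is correct and coincides with the paper's intended argument: the paper presents this proposition as an immediate consequence of Definition~\ref{Generealized_eigenpair_equation} together with the subgradient characterizations \eqref{Subdifferetnial_characterization_2} and \eqref{Subdifferetnial_characterization_1}, which is exactly the termwise dictionary translation you carry out, and your observation that $\divg$ already antisymmetrizes its argument (so the eigenvalue equation is untouched by replacing $\Xi$ with $\widetilde{\Xi}(u,v)=(\Xi(u,v)-\Xi(v,u))/2$) is the right justification for the final remark. One minor point: the rescaling fallback you mention at the end is never needed (and, taken literally, would spoil either $\|\xi\|_{1,\nodeset}=1$ or the value of $\Lambda$), because the sign conditions force $\Xi(u,v)$ and $-\Xi(v,u)$ to share the sign of $\grad f(u,v)$ whenever nonzero, so no cancellation occurs in the antisymmetrization and $\|\widetilde{\Xi}\|_{1,\edgeset}=\|\Xi\|_{1,\edgeset}=1$ holds with equality.
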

\begin{remark}
We would like to observe that since $\rayl_{\infty}$ is a locally Lipschitz function of $\R^n\setminus\{0\}$, the notion of critical point can also be generalized considering the Clarke subderivative  $\partial^{Cl}\rayl_{\infty}$, see \cite{clarke1990optimization}, i.e. $f$ is a Clarke $\infty$-eigenpairs iff
\begin{equation}\label{Clarke_generalized_critical_point_Eq}
  0\in\partial^{Cl}\rayl_{\infty}(f)\,.
\end{equation}
From \cite{clarke1990optimization, hein2010inverse} we know that:
\begin{equation}\label{Subgrad_inclusion}
  \partial^{Cl}\rayl_{\infty}(f)\subseteq
  \frac{\partial\|\grad f\|_{\infty}\|f\|_{\infty}-\partial\|f\|_{\infty}\|\grad f\|_{\infty}}%
  {\|f\|^2_{\infty}}\,,
\end{equation}
which shows that the notion of generalized $\infty$-eigenpair (\Cref{Generealized_eigenpair_equation}) extends the notion of Clarke-eigenpair. We refer to the work in \cite{zhang2021homological} for an example where the set of the Clarke eigenvalues is strictly included in the set of the generalized eigenvalues.
Nevertheless, \Cref{Generealized_eigenpair_equation} provides some practical advantages with respect to the eq.~\eqref{Clarke_generalized_critical_point_Eq} since, differently from $\partial^{Cl}\rayl_{\infty}(f)$, both $\partial\|f\|_{\infty}$ and $\partial\|\grad f\|_{\infty}$ can be explicitly identified.
\end{remark}

\begin{NEW}
    From the characterization of the subgradients \eqref{Subdifferetnial_characterization_1} and \eqref{Subdifferetnial_characterization_2}, and the reformulation of the generalized $\infty$-eigenvalue problem in \cref{Generealized_eigenpair_system}, it is easy to see that the $\infty$-eigenpairs are usually not unique. Indeed, given $(f,\Lambda)$ a generalized  $\infty$-eigenpair, if there exists a node $u$ such that $|f(u)|< \|f\|_{\infty}$ and $\|\{\grad f\}(u)\|_{\infty}< \|\grad f\|_{\infty}$. Then we can consider a continuous family of perturbations $f_{\epsilon}$ of $f$ on the only node $u$ (e.g. $f_\epsilon(v)=f(v)+\epsilon \delta_u(v)$) such that, as long as $f_\epsilon$ satisfies 
    \begin{equation}
    |f_{\epsilon}(u)|< \|f\|_{\infty} \quad \text{and} \quad \|\{\grad f_{\epsilon}\}(u)\|_{\infty}\leq \|\grad f\|_{\infty},
    \end{equation}
    $f_\epsilon$ is a generalized $\infty$-eigenfunction of the same eigenvalue $\Lambda$. As a particular case, any minimizer of the $\infty$-Rayleigh quotient, that we have observed in \cref{ex:2} and \cref{thm_distance_unique_minimizer}  generically to be not unique, is a generalized $\infty$-eigenpair.   
    It is clear that, since $\rayl_p(f)$ converges to $\rayl_{\infty}(f)$, any limit of $p$-Laplacian eigenpairs for $p\rightarrow\infty$ is always a generalized $\infty$-eigenpair. However, the same is not straightfoward for the solutions of the $\infty$-limit eigenvalue equation \eqref{limitin_inf_eigenvalue_eq}. Indeed, as we have observed in \cref{ex:1}, there exist solutions of the  $\infty$-limit eigenvalue equation that are not limit of $p$-Laplacian eigenpairs. In the next section we prove that any solution of the $\infty$-limit eigenvalue equation is also a generalized $\infty$-eigenpair. To this end we prove that the generalized $\infty$-eigenpairs are uniquely characterized by a topological property analogue to the one satisfied by the solutions of the $\infty$-limit eigenvalue equation, see \cref{optimal_paths_limiting_eigenfunctions} and \cref{Infinity_eigenfunctions_characterization}.
\begin{remark}
    We dedicate this remark to highlight analogies and differences of the $\infty$-Laplacian eigenvalue problem and the theory of Lipschitz learning in $\ell_p$-semi-supervised learning \cite{calder2018game, calder2019consistency, slepcev2019analysis,  roith2023continuum, kyng2015algorithms, el2016asymptotic, flores2022analysis}. 
    In this case one wants to find a solution to the problem 
\begin{equation}\label{l_p_learning_problem}
    \min\{\|\grad f\|_{p}|\; f(u)=g(u) \quad \forall u\in \boundary\},
\end{equation}
 where $g$ is a fixed boundary condition. When $p\in(1,\infty)$, the minimum is unique and satisfies the system of equations
 \begin{equation}\label{p-lap_learning}
 \begin{cases}
  \plap f(u)=0 \qquad &\forall u\in \internalnodes,\\
  f(u)=g(u) \qquad &\forall u\in \boundary.
 \end{cases}
\end{equation}
 However, when one considers the case of Lipschitz learning, i.e. $p=\infty$, the minimum is not unique anymore. Uniqueness is recovered by considering the absolutely minimal Lipschitz extension of $g$, which corresponds to the solution of the system 
  \begin{equation}\label{inf_lap-learning}
  \begin{cases}
  \inflap f(u)=0 \qquad &\forall u\in \internalnodes,\\
  f(u)=g(u) \qquad &\forall u\in \boundary.
  \end{cases}
\end{equation}
 In particular, the unique solution of \eqref{inf_lap-learning} is also the limit of the solutions of \eqref{p-lap_learning} as $p$ goes to $\infty$.

 Similarly to \eqref{l_p_learning_problem}, the generalized eigenpairs simply correspond to the “critical points" of the $\infty$-Rayleight quotient and they are generally not unique. A smaller subset of $\infty$-eigenpairs can always be obtained by looking at the limit of $p$-Laplacian eigenpairs, i.e. critical points of the $p$-Rayleigh quotient, as $p$ goes to $\infty$. So, as done in going from \eqref{p-lap_learning} to \eqref{inf_lap-learning}, we have identified a limit equation satisfied by the limit of $p$-eigenpairs (see \eqref{limitin_inf_eigenvalue_eq}). However, differently from the case of $\ell_p$ semi-supervised learning, the solutions of the limit eigenvalue equation \eqref{limitin_inf_eigenvalue_eq}, are not all and only the limits of $p$-Laplacian eigenpairs (see \cref{ex:1}). 
\end{remark}

\end{NEW}



\subsection{Topological characterization}
In this section we discuss a geometrical characterization of the generalized $\infty$-eigenfunctions similar to the one proved in Proposition \ref{optimal_paths_limiting_eigenfunctions}. 
In both these characterizations, given an eigenpair, $(f,\Lambda)$, we prove the existence of “good" paths that connect points in $\nodemaxset(f)\cup\boundary$ and whose length matches the value of the eigenvalue. 
However, differently from Proposition \ref{optimal_paths_limiting_eigenfunctions}, where we proved the existence of a “good" path $\Gamma$ for any extremal point $v\in\nodemaxset(f)$, in the case of generalized $\infty$-critical points, extremal points that do not correspond to any “good" path may indeed exist. 
Moreover, contrary to the case of limit $\infty$-eigenpairs where the existence of such “good" paths was only a necessary condition for the existence of an eigenpair, in the case of generalized eigenpairs, the existence of “good" paths is also a sufficient condition. 
\begin{proposition}\label{Infinity_eigenfunctions_characterization}
  The pair $(f,\Lambda)$ is a generalized $\infty$-eigenpair with $f$ not a constant function if and only if there exists a path $\Gamma=\{(u_i,u_{i+1})\}_{i=1}^{n-1}$ such that:
  \begin{enumerate}
  \item $u_i\in \nodemaxset(f)\cup \boundary\quad  i=1,n$;
  \item $(u_i,u_{i+1})\in\edgemaxset(f)$ and $f(u_i)>f(u_{i+1})$ for all $i=1,\dots,n$;
  \item assuming without loss of generality that $f(u_1)>0$ , if  
    $u_n\in\boundary$ then $\Lambda=\frac{1}{\length(\Gamma)}$, if $u_n\notin\boundary$ then $f(u_n)=-f(u_1)$ and $\Lambda=\frac{2}{\length(\Gamma)}$.
    Moreover
    \begin{equation*}
      \frac{1}{\Lambda}
      =\min\bigg\{\min_{\{v|f(v)=-\|f\|_{\infty}\}}
      \frac{d(u_1,v)}{2},d_{\boundary}(u_1)\bigg\}\,.
    \end{equation*}
  \end{enumerate}
\end{proposition}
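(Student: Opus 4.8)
The statement is an equivalence, so the plan is to prove the two implications separately, recording first a fact that both will use: every generalized $\infty$-eigenpair $(f,\Lambda)$ with $f$ non-constant satisfies $\Lambda=\rayl_\infty(f)=\|\grad f\|_\infty/\|f\|_\infty>0$. To see this, pair the identity $-\divg\Xi=\Lambda\xi$ of Proposition~\ref{Prop_eq_characterization_of_gen_inf_eigenpairs} with $f$ and use the integration-by-parts formula~\eqref{Integration_by_parts-divergence-theorem}: on the right $\SCAL{\xi}{f}=\|f\|_\infty$ because, by~\eqref{Subdifferetnial_characterization_2}, $\xi$ is supported on $\nodemaxset(f)$ with $\sign\xi=\sign f$ and $\|\xi\|_{1,\nodeset}=1$; on the left $\SCAL[\edgeset]{\Xi}{\grad f}=\|\grad f\|_\infty$ by the analogous properties of $\Xi$ in~\eqref{Subdifferetnial_characterization_1}. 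Connectedness and non-constancy give $\|f\|_\infty,\|\grad f\|_\infty>0$.

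For the forward implication I would build the path by a flow-tracing argument. Fix $\xi,\Xi$ as in Proposition~\ref{Prop_eq_characterization_of_gen_inf_eigenpairs}. Since $\|\xi\|_{1,\nodeset}=1$ there is $u_1$ with $\xi(u_1)\neq0$, hence $|f(u_1)|=\|f\|_\infty$; replacing $f$ by $-f$ if needed, assume $f(u_1)=\|f\|_\infty>0$, so $\xi(u_1)>0$ and $-\divg\Xi(u_1)=\Lambda\xi(u_1)>0$. Writing the divergence out via~\eqref{Divergence-def}, $-\sum_{v\sim u_1}\edgelength_{u_1v}\Xi(u_1,v)>0$ forces a neighbour $u_2$ with $\Xi(u_1,u_2)<0$, and the sign/support constraints on $\Xi$ then give $(u_1,u_2)\in\edgemaxset(f)$ and $f(u_2)<f(u_1)$. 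Now iterate: at a vertex $u_j$, stop if $u_j\in\boundary$ or $\xi(u_j)\neq0$ — in the latter case $|f(u_j)|=\|f\|_\infty$, and since $f$ strictly decreases along the path, necessarily $f(u_j)=-\|f\|_\infty$; otherwise $u_j\in\internalnodes$ with $-\divg\Xi(u_j)=0$, and since the edge back to $u_{j-1}$ carries $\Xi(u_j,u_{j-1})=-\Xi(u_{j-1},u_j)>0$, flow balance forces a further neighbour $u_{j+1}$ with $\Xi(u_j,u_{j+1})<0$, i.e. $(u_j,u_{j+1})\in\edgemaxset(f)$ and $f(u_{j+1})<f(u_j)$. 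Strict monotonicity of $f$ rules out repetitions, so finiteness of $\Gc$ forces termination, yielding $\Gamma=\{(u_i,u_{i+1})\}_{i=1}^{n-1}$ satisfying points 1 and 2. Point 3 then follows from the telescoping identity $f(u_1)-f(u_n)=\|\grad f\|_\infty\sum_i 1/\edgelength_{u_iu_{i+1}}=\|\grad f\|_\infty\,\length(\Gamma)$ combined with $\Lambda=\|\grad f\|_\infty/\|f\|_\infty$, distinguishing $f(u_n)=0$ ($u_n\in\boundary$) from $f(u_n)=-\|f\|_\infty$.

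For the converse I would read off $\xi,\Xi$ from the path and invoke Definition~\ref{Generealized_eigenpair_equation}. Assuming, as in point 3, $f(u_1)>0$, point 1 forces $u_1\in\nodemaxset(f)$ with $f(u_1)=\|f\|_\infty$. Put a uniform flux on $\Gamma$: set $\Xi(u_i,u_{i+1}):=-\alpha/\edgelength_{u_iu_{i+1}}=-\Xi(u_{i+1},u_i)$ with $\alpha:=1/\length(\Gamma)$, and $\Xi\equiv0$ off $\Gamma$; then $\|\Xi\|_{1,\edgeset}=\alpha\,\length(\Gamma)=1$, $\Xi$ is supported on $\Gamma\subseteq\edgemaxset(f)$ with $\sign\Xi=\sign\grad f$, so $-\divg\Xi\in\partial\|\grad f\|_\infty$ by~\eqref{Subdifferetnial_characterization_1}. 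A direct computation with~\eqref{Divergence-def} gives $-\divg\Xi(u_1)=\alpha$, $-\divg\Xi(u_n)=-\alpha$, and $-\divg\Xi=0$ at every other vertex. Taking $\xi:=\delta_{u_1}$ when $u_n\in\boundary$ ($\Lambda=\alpha$) and $\xi:=\tfrac12\delta_{u_1}-\tfrac12\delta_{u_n}$ when $f(u_n)=-f(u_1)$ ($\Lambda=2\alpha$), one checks $\|\xi\|_{1,\nodeset}=1$, that $\xi$ is supported on $\nodemaxset(f)$ with $\sign\xi=\sign f$, hence $\xi\in\partial\|f\|_\infty$ by~\eqref{Subdifferetnial_characterization_2}, and that $-\divg\Xi=\Lambda\xi$ on $\internalnodes$; thus $(f,\Lambda)$ is a generalized $\infty$-eigenpair.

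Finally, in both directions, once $\Lambda=\|\grad f\|_\infty/\|f\|_\infty$ is available, the moreover identity $1/\Lambda=\min\{\min_{\{v\,:\,f(v)=-\|f\|_\infty\}}d(u_1,v)/2,\;d_{\boundary}(u_1)\}$ follows exactly as in Proposition~\ref{optimal_paths_limiting_eigenfunctions}: the Lipschitz bound~\eqref{Lipschitzianity} applied from $u_1$ to any $v$ with $f(v)=-\|f\|_\infty$, and to the nearest boundary node, gives ``$\geq$'', while the path $\Gamma$ realises one of the two quantities and gives ``$\leq$''. I expect the main obstacle to be the forward implication, specifically the flow-tracing step: one must argue that flow balance at a vertex where $\xi$ vanishes always produces a fresh maximal-gradient edge along which $f$ strictly decreases, and that the construction can only stop on $\boundary$ or at a vertex attaining $-\|f\|_\infty$ — this requires carefully combining the divergence formula~\eqref{Divergence-def} with the precise sign and support structure of $\xi$ and $\Xi$ in~\eqref{Generealized_eigenpair_system}. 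The backward direction is essentially a verification, the only delicacy being the bookkeeping with the edge weights $\edgelength_{uv}$ and the factor $\tfrac12$ in the edge $1$-norm.
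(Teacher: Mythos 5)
Your proof is correct and follows essentially the same route as the paper: the forward direction traces the ``flow'' of $\Xi$ from a node where $\xi\neq 0$ through the balance equation $-\divg\Xi=\Lambda\xi$ to build the maximal-gradient path, and the converse constructs the same uniform flux $\Xi$ supported on $\Gamma$ and the same $\xi$ supported on $\{u_1,u_n\}$. Your preliminary integration-by-parts argument establishing $\Lambda=\|\grad f\|_\infty/\|f\|_\infty$ for any generalized eigenpair is a small but welcome addition that the paper only uses implicitly.
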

\begin{proof}
Assume that $(f,\Lambda)$ is a generalized $\infty$-eigenpair and let $(\xi,\Xi)$ be the two subgradients as in Definition \eqref{Generealized_eigenpair_system}, i.e., such that
\begin{equation}\label{Prop_characterization_eq_1}
    -\divg(\Xi)=\Lambda \xi\,,
\end{equation}
and $\Xi(u,v)=-\Xi(v,u)$. Let $\xi(u_1)\neq 0$ and without loss of generality assume $f(u_1)>0$. From eq.~\eqref{Generealized_eigenpair_system} there has to exist an edge $(u_1,u_2)\in\edgemaxset(f)$ such that $\Xi(u_1,u_2)\neq 0$, i.e. $f(u_2)=f(u_1)-\|\grad f\|_{\infty}/\edgelength_{u_1u_2}<f(u_1)$.
Let us focus now on the second node $u_2$. Since $f(u_2)<f(u_1)$, if $u_2\notin\boundary$ and $f(u_2)\neq -f(u_1)$, we have that $\xi(u_2)=0$. 
Moreover $\Xi(u_1,u_2)<0$ and $\Xi$ has to satisfy \eqref{Prop_characterization_eq_1}, i.e.:
\begin{equation*}
    -\divg(\Xi)(u_2)=\sum_{v\sim u_2}\edgelength_{vu_2}\Xi(v,u_2)=\Lambda \xi(u_2)=0\,,
\end{equation*}
hence, there must exist an edge $(u_3,u_2)\in\edgemaxset(f)$ such that $\Xi(u_2,u_3)> 0\,,$ i.e., $f(u_3)=f(u_2)-\|\grad f\|_{\infty}/\edgelength_{u_2 u_3}<f(u_2)$\,.
Iterating the above argument, we can define a path $\Gamma=\{(u_i,u_{i+1})\}_{i=1}^{n-1}$ such that $u_n\in\boundary\cup\nodemaxset(f)$, $(u_i,u_{i+1})\in\edgemaxset(f)$ and $f(u_i)>f(u_{i+1})$.
Furthermore, as in the proof of Proposition~\ref{optimal_paths_limiting_eigenfunctions}, it is easy to see that, given a function $f$ and a path $\Gamma$ that satisfies the first two items of the thesis, necessarily we have that, if $u_n\in\boundary$, $\Lambda=1/\length(\Gamma)$. On the other hand, if $f(u_n)=-f(u_1)$, $\Lambda=2/\length(\Gamma)$ and 
\begin{equation*}
  \frac{1}{\Lambda}=
  \min\bigg\{\min_{f(v)=-\|f\|_{\infty}}\frac{d(u_0,v)}{2},d_{\boundary}(u_0)\bigg\}\,.
\end{equation*}
Indeeed, proceeding by contraddiction, if one of the last equalities was not true then an edge $(v_1,v_2)$ with $|\grad f(v_1,v_2)|>\|\grad f\|_{\infty}$ would exist.
To prove the opposite inclusion, we assume that, given $(f,\Lambda)$, there exists a path  $\Gamma=\{(u_i,u_{i+1})\}_{i=1}^{n-1}$ with $f(u_1)>0$ that satisfies the thesis and look at the case $u_n\in\{v\,|\,f(v)=-f(u_1)\}$ with  $\Lambda=2/d(u_1,u_n)=2/\length(\Gamma)<d_{\boundary}(u_1)$ (the other case can be proved analogously). 
We aim at defining two functions $\Xi$ and $\xi$ that satisfy the sufficient conditions in Proposition \ref{Prop_eq_characterization_of_gen_inf_eigenpairs}.
Define the two functions as follows:
\begin{equation*}
    \begin{aligned}
      &\Xi(u,v):=
        \frac{\delta_{\Gamma}(u,v)}{\edgelength_{uv}}\:
        \frac{   \mathrm{sign}\big(\grad f(u,v)\big)}{\length(\Gamma)}
        \\[.5em]
      &\xi(v):=
        \frac{\delta_{u_1}(v)\mathrm{sign}\big(f(u_1)\big)
        +\delta_{u_n}(v)\mathrm{sign}\big(f(u_n)\big)}{2}\,,
    \end{aligned}
\end{equation*}
where $\delta_{\Gamma}$ $\delta_{u_i}$ are delta functions, i.e.,  $\delta_{\Gamma}(u,v)=1$ if $(u,v)\in \Gamma$ and $\delta_{\Gamma}(u,v)=0$ otherwise, with analogous definition for $\delta_{u_i}$.
Then, since $\xi$ and $-\divg \Xi$ belong respectively to the two subgradients $\partial\|f\|_{\infty}$ and $\partial\|\grad f\|_{\infty}$, an easy calculations shows that for any node $v\not\in\Gamma$ as well as for any node $v\in\Gamma$, with $v\neq u_1$ and $v\neq u_n$, we have:
\begin{equation*}
    -\divg \Xi(v)=0=\Lambda\, \xi(v)\,.
\end{equation*}
Finally, in the case $v=u_1$ (or analogously $v=u_n$), we have
\begin{equation*}
  -\divg \Xi(u_1)=
  \frac{\edgelength_{u_1 u_2}}{2}
  \Big(\Xi(u_2,u_1)-\Xi(u_1,u_2)\Big)
  =\frac{\mathrm{sign}\big((\grad f(u_2,u_1)\big)}{\length(\Gamma)}
  =\frac{\Lambda}{2}=\Lambda \xi(u_1)\,,
\end{equation*}
where we have used the fact that $\mathrm{sign}\big(\grad f(u_2,u_1)\big)=\mathrm{sign}\big(f(u_1)-f(u_2)\big)=1$, which follows by the hypothesis $f(u_1)>f(u_2)$.
This concludes the proof.
\end{proof}

As a corollary of Proposition \ref{Infinity_eigenfunctions_characterization}
and Proposition \ref{optimal_paths_limiting_eigenfunctions} it is easy to prove that any eigenpair that satisfies the limit eigenvalue equation~\eqref{limitin_inf_eigenvalue_eq} is also a generalized $\infty$-eigenpair.
\begin{corollary}\label{solutions_of_inf_eigenvalue_eq_are_generalized_inf_eigenpairs}
Let $(f,\Lambda)$ satisfy the limit eigenvalue equation~\eqref{limitin_inf_eigenvalue_eq} with $f$ not constant, then $(f,\Lambda)$ is also a generalized $\infty$-eigenpair according to Definition~\ref{Generealized_eigenpair_equation}. 
\end{corollary}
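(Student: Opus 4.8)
The plan is to derive the statement as an immediate consequence of the two ``good path'' characterizations already at hand. Proposition~\ref{optimal_paths_limiting_eigenfunctions} guarantees that a solution $(f,\Lambda)$ of~\eqref{limitin_inf_eigenvalue_eq} with $f$ non-constant admits, from \emph{every} maximal node, a monotone path along which the gradient has modulus $\|\grad f\|_\infty$ and whose terminal node and length obey the expected boundary/antipodal dichotomy; Proposition~\ref{Infinity_eigenfunctions_characterization}, in its ``if'' direction, says that the existence of one single such path already forces $(f,\Lambda)$ to be a generalized $\infty$-eigenpair. So the whole argument reduces to checking that the path produced by the former proposition meets the three hypotheses of the latter.

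First I would normalize the sign. Replacing $f$ by $-f$ alters neither the validity of~\eqref{limitin_inf_eigenvalue_eq} (that system is odd in $f$) nor membership in the class of generalized $\infty$-eigenpairs: both $f\mapsto\|f\|_\infty$ and $f\mapsto\|\grad f\|_\infty$ are even, so $\partial\|-f\|_\infty=-\partial\|f\|_\infty$ and $\partial\|\grad(-f)\|_\infty=-\partial\|\grad f\|_\infty$, and the inclusion $0\in\partial\|\grad f\|_\infty\cap\Lambda\partial\|f\|_\infty$ is stable under this flip. Since $f$ is not constant and $\Gc$ is connected, $\|f\|_\infty>0$ and $\|\grad f\|_\infty>0$; hence we may fix $u_1\in\nodemaxset(f)$ with $f(u_1)=\|f\|_\infty>0$ and apply Proposition~\ref{optimal_paths_limiting_eigenfunctions} at $u_1$, producing a path $\Gamma=\{(u_i,u_{i+1})\}_{i=1}^{n-1}$ starting at $u_1$.

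It remains to match $\Gamma$ against Proposition~\ref{Infinity_eigenfunctions_characterization}. Item~1 holds because $u_1\in\nodemaxset(f)$, while in the non-boundary alternative $|f(u_n)|=|f(u_1)|=\|f\|_\infty$ puts $u_n$ in $\nodemaxset(f)$. Item~2 holds because $|\grad f(u_i,u_{i+1})|=\|\grad f\|_\infty$ gives $(u_i,u_{i+1})\in\edgemaxset(f)$, and the slope identity $f(u_i)-f(u_{i+1})=\|\grad f\|_\infty/\edgelength_{u_i,u_{i+1}}$ together with $\|\grad f\|_\infty>0$ gives the strict monotonicity $f(u_i)>f(u_{i+1})$. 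Item~3 is the dichotomy ($\Lambda=1/\length(\Gamma)$, resp.\ $f(u_n)=-f(u_1)$ and $\Lambda=2/\length(\Gamma)$) together with $1/\Lambda=\min\{d_\boundary(u_1),\, \min_{\{v:f(v)=-\|f\|_\infty\}}d(u_1,v)/2\}$; this last evaluation follows from Proposition~\ref{optimal_paths_limiting_eigenfunctions}, or directly from the Lipschitz bound~\eqref{Lipschitzianity} applied to $(u_1,w)$ for $w\in\boundary$ and to $(u_1,v)$ for $f(v)=-\|f\|_\infty$, combined with $\Lambda=\|\grad f\|_\infty/\|f\|_\infty$ (Proposition~\ref{Prop_maximal_points}). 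Invoking the ``if'' part of Proposition~\ref{Infinity_eigenfunctions_characterization} then concludes. I do not expect a genuine obstacle here; the one point requiring care is orientation and sign bookkeeping — making sure the path runs downhill from a positive maximum so that it lines up with the ``$f(u_1)>0$'' normalization of Proposition~\ref{Infinity_eigenfunctions_characterization} — which is exactly what the reduction in the second step arranges.
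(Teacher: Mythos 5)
Your proposal is correct and follows essentially the same route as the paper: the paper's proof is a one-line observation that Proposition~\ref{optimal_paths_limiting_eigenfunctions} supplies a path satisfying the hypotheses of the ``if'' direction of Proposition~\ref{Infinity_eigenfunctions_characterization}, which is exactly the reduction you carry out (with the sign normalization and the item-by-item verification made explicit).
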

\begin{proof}
The proof is a consequence of the fact that, from Propositon \ref{optimal_paths_limiting_eigenfunctions}, for any eigenpair that satisfies the limit eigenvalue equation~\eqref{limitin_inf_eigenvalue_eq}, there exists a path $\Gamma$ that satisfies the hypotheses of Proposition \ref{Infinity_eigenfunctions_characterization}.
\end{proof}

Next we deal with the opposite problem and ask ourselves if any generalized $\infty$-eigenvalue (see Definition \ref{Generalized_eigenpair}) can be associated to an eigenfunction that solves \eqref{limitin_inf_eigenvalue_eq}.
As we will see in Example \ref{Ex_Generalized}, the answer is negative in general. Nevertheless, in Theorem \ref{formulations_comparison} we will see that the statement can always be proved to hold up to a subgraph of $\Gc$.
Before we prove this statement and discuss the properties of the subgraph, we observe that the $\infty$-eigenvalue problem can be reformualted in terms of a constrained weighted linear Laplacian eigenvalue problem.
Indeed, from the characterizations of the subgradient in~\eqref{Subdifferetnial_characterization_1} and~\eqref{Subdifferetnial_characterization_2}, it is possible to reformulate system~\eqref{Generealized_eigenpair_system} as in the following proposition.
\begin{proposition}\label{weighted_laplacian_equivalence}
  The pair $(f,\Lambda)$ is a generalized $\infty$-eigenpair if and only if there exist two admissible densities $\nodeweight:\internalnodes\rightarrow\R_+$, and $\edgeweight:\edgeset\rightarrow\R_+$, with $\edgeweight_{uv}=\edgeweight_{vu}$ such that:
  \begin{equation}\label{eq-weighted_laplacian_equivalence}
    \begin{cases}
      -\divg(\edgeweight\odot\grad f)(u)=\Lambda \nodeweight_u f(u) \quad &\forall u\in\internalnodes\\
      |\grad f(u,v)|=\|\grad f(u,v)\|_{\infty}\quad &\text{if}\;\edgeweight_{uv}>0 \\
      |f(u)|=\|f(u)\|_{\infty}\quad &\text{if}\;\nodeweight_u>0\\
      \|\edgeweight \odot\grad f\|_{1,\edgeset}=1\\
      \|\nodeweight \odot f\|_{1,\nodeset}=1\\
      f(u)=0 \quad &if\;u\in\boundary
    \end{cases}\,.
  \end{equation}
\end{proposition}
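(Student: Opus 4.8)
The plan is to show that the system \eqref{eq-weighted_laplacian_equivalence} is merely a rewriting of the system \eqref{Generealized_eigenpair_system} from Proposition~\ref{Prop_eq_characterization_of_gen_inf_eigenpairs}, obtained by absorbing the ``signs and supports'' data $(\Xi,\xi)$ into nonnegative multiplicative densities $(\edgeweight,\nodeweight)$ acting on the gradient and on $f$ itself. The key observation is that the sign conditions $\mathrm{sign}(\Xi(u,v))=\mathrm{sign}(\grad f(u,v))$ and $\mathrm{sign}(\xi(u))=\mathrm{sign}(f(u))$ appearing in \eqref{Generealized_eigenpair_system} are exactly what is needed to write $\Xi = \edgeweight\odot\grad f$ and $\xi = \nodeweight\odot f$ with $\edgeweight,\nodeweight\geq 0$; conversely, any nonnegative densities supported on the maximal sets produce admissible subgradients.

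First, for the forward direction, I would take a generalized $\infty$-eigenpair $(f,\Lambda)$ and invoke Proposition~\ref{Prop_eq_characterization_of_gen_inf_eigenpairs} to obtain $\xi:\internalnodes\to\R$ and $\Xi:\edgeset\to\R$ with $\Xi(u,v)=-\Xi(v,u)$ satisfying \eqref{Generealized_eigenpair_system}. Define
\begin{equation*}
  \edgeweight_{uv} := \frac{|\Xi(u,v)|}{\|\grad f\|_{\infty}}\,, \qquad
  \nodeweight_u := \frac{|\xi(u)|}{\|f\|_{\infty}}\,,
\end{equation*}
which are nonnegative and satisfy $\edgeweight_{uv}=\edgeweight_{vu}$ since $|\Xi(u,v)|=|\Xi(v,u)|$. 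On the support of $\Xi$ one has $(u,v)\in\edgemaxset(f)$, so $|\grad f(u,v)|=\|\grad f\|_{\infty}$ and, using the sign condition, $\edgeweight_{uv}\grad f(u,v)=\frac{|\Xi(u,v)|}{\|\grad f\|_{\infty}}\,\mathrm{sign}(\grad f(u,v))\,\|\grad f\|_{\infty}=\Xi(u,v)$; off the support both sides vanish. Hence $\edgeweight\odot\grad f=\Xi$ everywhere, and likewise $\nodeweight\odot f=\xi$. The divergence equation $-\divg(\edgeweight\odot\grad f)=\Lambda\,\nodeweight\odot f$ then reads $-\divg\Xi=\Lambda\xi$, which holds; the two support conditions in \eqref{eq-weighted_laplacian_equivalence} are immediate from $\edgeweight_{uv}>0\Rightarrow\Xi(u,v)\neq 0\Rightarrow (u,v)\in\edgemaxset(f)$ and similarly for $\nodeweight$; and the normalizations follow from $\|\edgeweight\odot\grad f\|_{1,\edgeset}=\|\Xi\|_{1,\edgeset}=1$, $\|\nodeweight\odot f\|_{1,\nodeset}=\|\xi\|_{1,\nodeset}=1$.

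For the converse, given densities $(\nodeweight,\edgeweight)$ solving \eqref{eq-weighted_laplacian_equivalence}, I would set $\Xi:=\edgeweight\odot\grad f$ and $\xi:=\nodeweight\odot f$ and simply check that $(\Xi,\xi)$ verifies all clauses of \eqref{Generealized_eigenpair_system}: the eigenvalue equation is given; $\|\Xi\|_{1,\edgeset}=\|\xi\|_{1,\nodeset}=1$ are the normalization clauses; if $\Xi(u,v)\neq 0$ then $\edgeweight_{uv}>0$, forcing $|\grad f(u,v)|=\|\grad f\|_{\infty}$, and $\mathrm{sign}(\Xi(u,v))=\mathrm{sign}(\edgeweight_{uv}\grad f(u,v))=\mathrm{sign}(\grad f(u,v))$ since $\edgeweight_{uv}>0$; the node clauses are analogous; and antisymmetry $\Xi(u,v)=-\Xi(v,u)$ follows from $\edgeweight_{uv}=\edgeweight_{vu}$ together with $\grad f(u,v)=-\grad f(v,u)$. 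By the characterizations \eqref{Subdifferetnial_characterization_1} and \eqref{Subdifferetnial_characterization_2}, $-\divg\Xi\in\partial\|\grad f\|_{\infty}$ and $\xi\in\partial\|f\|_{\infty}$, so $(f,\Lambda)$ is a generalized $\infty$-eigenpair by Definition~\ref{Generealized_eigenpair_equation}. I do not expect a genuine obstacle here: the only mild subtlety is being careful, in the forward direction, about edges where $\grad f(u,v)=0$ (these cannot lie in $\edgemaxset(f)$ unless $f$ is constant, which is excluded) and about correctly handling the one-half factor in the edge norm \eqref{eq:norms}, which is already built consistently into the definitions of $\divg$ and $\|\cdot\|_{1,\edgeset}$; the argument is essentially bookkeeping.
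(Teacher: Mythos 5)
Your proposal is correct and takes essentially the same approach as the paper, which simply defines the densities by dividing the subgradient elements $|\Xi|$ and $|\xi|$ by $\|\grad f\|_{\infty}$ and $\|f\|_{\infty}$ and calls the verification a straightforward substitution. Your version is in fact more careful: you spell out the bookkeeping the paper omits, and your normalizing constants are consistent with the $1$-norm conditions in \eqref{eq-weighted_laplacian_equivalence} (the paper's stated formulas carry an extra factor of $2$ that does not match its own normalization).
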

\begin{proof} 
  Straightforward substitution into equation \eqref{Generealized_eigenpair_system} shows that the quantities
  \begin{equation*}
    \nodeweight_u:=\frac{|\xi(u)|}{2\|f\|_{\infty}}
    \qquad
    \edgeweight:=\frac{|\Xi(u,v)|+|\Xi(v,u)|}{\|\grad f\|_{\infty}}    
  \end{equation*}
  are the desired admissible densities. The inverse implication follows by inverting the above definitions of $\nodeweight$ and $\edgeweight$.
\end{proof}
\begin{remark}
    
We remark the an analogue linearization of the graph $p$-Laplacian eigenvalue problem, when $p\in (2,\infty)$, has been used in \cite{deidda2024_spec_energy} to compute $p$-eigenpairs as limits of sequences of linear eigenpairs. Thus, in a similar way, the linearization of the $\infty$-eigenproblem in \cref{weighted_laplacian_equivalence} could be the starting point to investigate numerical methods aimed at computing generalized $\infty$-eigenpairs, we refer to \cite{DEIDDA_PHD} for a preliminary investigation. On the other hand, we refer to \cite{BungertInfNumerical} for a numerical investigation of the solutions of the $\infty$-limit eigenvalue equation.
\end{remark}
To continue towards the sought result we need some definitions. Let $(f,\Lambda)$ denote an $\infty$-eigenpair and assume $(\edgeweight, \nodeweight)$ to satisfy the condition of Proposition~\ref{weighted_laplacian_equivalence}.
We say that a node $u\in \nodeset$ is supported by $\edgeweight$, ($u\in\nodeset_{\edgeweight}$), if there exists an edge $(u,v)\in\edgeset$ such that $\edgeweight_{uv}>0$.
Observe that if $u\in\internalnodes$ but $u\not\in \nodeset_{\edgeweight}$, then necessarily $\nodeweight_u=0$ (recall that if $\nodeweight_u\neq0$, $|f(u)|=\|f\|_{\infty}$).
In particular, we write $u\in supp(\nodeset)$, if there exist at least one $(\edgeweight,\nodeweight)$ as in Proposition~\ref{weighted_laplacian_equivalence}, such that $u\in\nodeset_{\edgeweight}$.
Now we can prove that any generalized $\infty$-eigenpair can be regarded as a limit $\infty$-eigenpair up to considering the subraph of $\Gc$ induced by $\mbox{supp}(\nodeset)$.
\begin{theorem}\label{formulations_comparison}
  Assume $(f,\Lambda)$ to be an $\infty$-eigenpair as in Definition \ref{Generalized_eigenpair}. If $u\in\mbox{supp}(\nodeset) \cap \internalnodes$, then $f$ satisfies the limit eigenvalue equation \eqref{limitin_inf_eigenvalue_eq} in $u$.
\end{theorem}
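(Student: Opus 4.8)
The plan is to unpack what it means for $u$ to be supported, and then verify the three cases of the limit eigenvalue equation~\eqref{limitin_inf_eigenvalue_eq} at $u$ directly from the system~\eqref{Generealized_eigenpair_system} (equivalently~\eqref{eq-weighted_laplacian_equivalence}). Fix admissible densities $(\edgeweight,\nodeweight)$ as in Proposition~\ref{weighted_laplacian_equivalence} with $u\in\nodeset_{\edgeweight}$, so there is an edge $(u,v)$ with $\edgeweight_{uv}>0$, hence $|\grad f(u,v)|=\|\grad f\|_{\infty}$, i.e. $u$ has an incident edge of maximal gradient. Thus $\|\Opc{\grad f}(u)\|_{\infty}=\|\grad f\|_{\infty}$, and either $\|\Opc{\grad f}^{-}(u)\|_{\infty}=\|\grad f\|_{\infty}$ or $\|\Opc{\grad f}^{+}(u)\|_{\infty}=\|\grad f\|_{\infty}$ (or both).

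First I would dispose of the sign/magnitude of $f(u)$. I would argue that the geometric characterization in Proposition~\ref{Infinity_eigenfunctions_characterization} (together with the ``good path'' construction) forces any node carrying positive $\edgeweight$ to lie on such a maximal path, along which $f$ is strictly monotone between a point of $\nodemaxset(f)\cup\boundary$ and another; consequently a supported internal node $u$ with $f(u)\neq 0$ has $\|\Opc{\grad f}(u)\|_{\infty}=\Lambda|f(u)|$ exactly as in Proposition~\ref{Prop_maximal_points}. Concretely: if $f(u)>0$, on the maximal path through $u$ the length from $u$ to the far endpoint is $\le d_{\boundary}(u)$ (resp. $\le \tfrac12 d(u,\{f=-f(u)\})$), and the third item of Proposition~\ref{Infinity_eigenfunctions_characterization} gives $1/\Lambda = \min\{d_{\boundary}(u_1), \min_{f(v)=-\|f\|_\infty} d(u_1,v)/2\}$ at the path's \emph{top} endpoint $u_1\in\nodemaxset(f)$; a Lipschitz/triangle-inequality step (as in~\eqref{Lipschitzianity} and the proof of Proposition~\ref{optimal_paths_limiting_eigenfunctions}) then propagates the equality $\|\grad f\|_\infty = \Lambda f(u)$ down the path to $u$ itself. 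This yields $\|\Opc{\grad f}(u)\|_{\infty} - \Lambda f(u) = 0$, which is the first argument of the $\min$ in~\eqref{limitin_inf_eigenvalue_eq}.

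Next I would check the $\inflap$ sign. Since $u$ lies on a descending maximal path, it has both an incoming maximal edge (from above, giving $\|\Opc{\grad f}^{-}(u)\|_\infty = \|\grad f\|_\infty$ when $u$ is not the top endpoint) and an outgoing maximal edge (to below, giving $\|\Opc{\grad f}^{+}(u)\|_\infty = \|\grad f\|_\infty$ when $u$ is not the bottom endpoint); and no edge can exceed $\|\grad f\|_\infty$. Thus at an interior path node $\inflap f(u) = \|\Opc{\grad f}^{-}(u)\|_\infty - \|\Opc{\grad f}^{+}(u)\|_\infty = 0$, so both entries of the $\min$ vanish and~\eqref{limitin_inf_eigenvalue_eq} holds. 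If $u$ is the top endpoint then $f(u)=\|f\|_\infty$, $\|\Opc{\grad f}^{+}(u)\|_\infty = 0$, so $\inflap f(u) = \|\grad f\|_\infty = \Lambda f(u) > 0$ and again $\min\{0,\,\text{positive}\}=0$. If $u$ is the bottom endpoint with $f(u)=-f(u_1)<0$, the symmetric argument handles the third case of~\eqref{limitin_inf_eigenvalue_eq}; and if $f(u)=0$ while $u\in\mathrm{supp}(\nodeset)\cap\internalnodes$, then $\nodeweight_u=0$ but the bottom endpoint of its path satisfies $f=0$, forcing $u$ to be that endpoint with a single maximal incoming edge balanced against... here one must be careful — I would instead note that $f(u)=0$ interior cannot be a genuine path endpoint unless $\|\Opc{\grad f}^{-}(u)\|_\infty = \|\Opc{\grad f}^{+}(u)\|_\infty$ by the same flow-conservation argument on $\Xi$, giving $\inflap f(u)=0$.

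The main obstacle I anticipate is the $f(u)=0$ case and, more generally, making the ``supported node lies on a descending maximal path with the right endpoint behaviour'' claim fully rigorous directly from flow conservation $-\divg\Xi(u)=\Lambda\xi(u)$ rather than by invoking Proposition~\ref{Infinity_eigenfunctions_characterization} as a black box: one needs that $\Xi$ restricted to maximal edges forms a flow that is balanced at every node with $\xi=0$, so starting from $u$ one can trace maximal edges both upward (increasing $f$) and downward (decreasing $f$) until hitting $\nodemaxset(f)$ or $\boundary$, and that the endpoint data then pins $\Lambda$ via the distance/packing-radius identities. Once that tracing argument is in place, verifying each branch of~\eqref{limitin_inf_eigenvalue_eq} is the routine case analysis sketched above.
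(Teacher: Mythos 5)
Your overall strategy (route everything through the ``good path'' characterization of Proposition~\ref{Infinity_eigenfunctions_characterization}) is workable in principle, but as written the argument has two problems, one of which is a false intermediate claim. You assert that for a supported internal node $u$ with $f(u)>0$ the equality $\|\Opc{\grad f}(u)\|_{\infty}=\Lambda f(u)$ ``propagates down the path'', so that the first argument of the $\min$ in~\eqref{limitin_inf_eigenvalue_eq} vanishes. This is not true: that equality holds only at nodes of $\nodemaxset(f)$ (that is the content of Proposition~\ref{Prop_maximal_points}). At a supported node with $0<f(u)<\|f\|_{\infty}$ one has $\|\Opc{\grad f}(u)\|_{\infty}-\Lambda f(u)=\|\grad f\|_{\infty}\bigl(1-f(u)/\|f\|_{\infty}\bigr)>0$, and the limit equation is satisfied there only because $\inflap f(u)=0$. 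The paper's own Example~\ref{Ex_Generalized} gives a concrete counterexample to your claim: for $f_1=(5/6,1/2)$ with $\Lambda_1=6/5$, the node $u_3$ is supported and $\|\Opc{\grad f_1}(u_3)\|_{\infty}=1\neq 3/5=\Lambda_1 f_1(u_3)$. Your later statement that ``both entries of the $\min$ vanish'' at interior path nodes repeats the error; the conclusion $\min=0$ survives by accident, but the reasoning would, if taken literally, prove a false identity.

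The second problem is the one you flag yourself: the claim that every supported node lies on a descending maximal path with the prescribed endpoint behaviour is the crux of your argument, and it is neither proved nor a consequence of Proposition~\ref{Infinity_eigenfunctions_characterization}, which only produces \emph{one} such path starting from a node where $\xi\neq 0$. Filling this in requires a flow-decomposition argument on $\Xi$ (balanced at all nodes with $\xi=0$, acyclic because $\sign\Xi=\sign\grad f$ forces strict monotonicity of $f$ along flow lines), which you sketch but do not carry out; your treatment of the $f(u)=0$ case is likewise left unresolved. Note that the paper avoids all of this global machinery: it works purely locally, writing the flow-conservation identity $-\divg(\edgeweight\odot\grad f)(u)=\Lambda\nodeweight_u f(u)$ at the single node $u$ and reading off the signs, splitting into the cases $|f(u)|<\|f\|_{\infty}$ (where $\nodeweight_u=0$ forces maximal edges of both signs at $u$, hence $\inflap f(u)=0$) and $|f(u)|=\|f\|_{\infty}$ (where all incident gradients have one sign, forcing $\nodeweight_u\neq 0$ and the first entry of the $\min$ to vanish). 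I would recommend adopting that local case analysis: it is shorter, it does not need the path-existence lemma, and it makes the correct dichotomy between the two entries of the $\min$ explicit.
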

\begin{proof}
  Let $u\in\mbox{supp}(\nodeset)\cap \internalnodes  $ and let $(\edgeweight,\nodeweight)$ be a pair of admissible densities, such that $u\in\nodeset_{\edgeweight}$. 
  Assuming $f(u)>0$, the weighted eigenvalue equation $-\divg(\edgeweight\grad f)(u)=\Lambda \nodeweight_u f(u)$ can be written as:
  \begin{equation}\label{eq1_comparison}
    -\|\grad f\|_{\infty}
    \sum_{v\sim u}\edgeweight_{uv}\,\edgelength_{uv}
    \mathrm{sign}\big(\grad f(u,v)\big)
    =\Lambda \nodeweight_u f(u)
    =\frac{\|\grad f\|_{\infty}}{\|f\|_{\infty}} \nodeweight_u f(u)\,.
  \end{equation}
  We first consider the case $f(u)<\|f\|_{\infty}$. Then $\nodeweight_u=0$, which yields $\|\Opc{\grad f}^-(u)\|_{\infty}=\|\Opc{\grad f}^+(u)\|_{\infty}$ i.e. $\inflap f(u)=0$, and:
  \begin{equation*}
    \|\Opc{\grad f}(u)\|_{\infty}-\Lambda f(u)=\|\grad f\|_{\infty}-\frac{\|\grad f\|_{\infty}}{\|f\|_{\infty}}f(u)>0\,.
  \end{equation*}
  If instead $f(u)=\|f\|_{\infty}$, for any $v\sim u$ we get $\grad f(u,v)\leq 0$ and thus $\nodeweight_u\neq0$ (otherwise, by hypotheses, \eqref{eq1_comparison}  is not satisfied). 
  Then, eq.~\eqref{eq1_comparison} reads:
  \begin{equation*}
    \|\grad f\|_{\infty}
    \Big(\sum_{v\sim u}\edgeweight_{uv}\edgelength_{uv}\Big)
    =\frac{\|\grad f\|_{\infty}}{\|f\|_{\infty}} \nodeweight_u \|f\|_{\infty}\,,
  \end{equation*}
  from which we obtain $\sum_{v\sim u}\edgeweight_{uv}\edgelength_{uv}=\nodeweight_u$. Replacing this last equality  back in $\eqref{eq1_comparison}$ we find:
  \begin{equation*}
    \begin{cases}
      \|\Opc{\grad f}(u)\|_{\infty}=\|\grad f\|_{\infty}=\Lambda f(u)\,,\\
      \inflap f(u)=\|\Opc{\grad f}^-(u)\|_{\infty}=\|\grad f\|_{\infty}>0\,.        
    \end{cases}
  \end{equation*}
   The proof is concluded by considering the cases $f(u)<0$ and $f(u)=0$, which can be proved analogously.
\end{proof}

We conclude this section by showing that there exist $\infty$-eigenpairs that are generalized critical points of $\rayl_{\infty}$ but that do not satisfy the $\infty$-eigenvalue equation \eqref{limitin_inf_eigenvalue_eq}.
By the same example, we also prove that there exist generalized $\infty$-eigenvalues concentrated between the first and second variational eigenvalues, which on the contrary never exist for the $p$-Laplacian when $p\in(1,\infty)$ \cite{DEIDDA2023_nod_dom}.

\begin{figure}[t]
  \begin{center}
    \begin{tikzpicture}[x={(\unitlength,0)},y={(0,\unitlength)}]

      \coordinate (Origin) at (0,0);
      \def\xscale{40}
      \def\yscale{40}
      \coordinate (Xone) at (\xscale,0);
      \coordinate (Yone) at (0,\yscale);

      \coordinate (N1) at ($(Origin)$);
      \coordinate (N2) at ($(Origin)+2.8*(Xone)$);
      \coordinate (N3) at ($(Origin)+4*(Xone)$);
      \coordinate (N4) at ($(Origin)+5.7*(Xone)$);
      
      \draw[black]
      (N1) node[shape=circle,draw,inner sep=0.5pt, fill=white] {\scriptsize{$B$}};
      \draw[black] 
      (N2) node[shape=circle,draw,inner sep=0.5pt, fill=white] {\scriptsize{$u_2$}} ;
      \draw[black]
      (N3) node[shape=circle,draw,inner sep=0.5pt, fill=white] {\scriptsize{$u_3$}};
      \draw[black]
      (N4) node[shape=circle,draw,inner sep=0.5pt, fill=white]{\scriptsize{$B$}};
      
      \begin{scope}[on background layer]
        \draw[black]
        (N1)--(N2) node[pos=0.5,sloped,above] {\small {$\edgelength_{B2}=1$}};;
        \draw[black]
        (N2)--(N3) node[pos=0.5,sloped,above] {\small {$\edgelength_{23}=3$}};
        \draw[black]
        (N3)--(N4)node[pos=0.5,sloped,above] {\small {$\edgelength_{3B}=2$}};;
      \end{scope}
    \end{tikzpicture}
  \end{center}
  \caption{A path graph admitting $\infty$-generalized eigenvalues that are not $\infty$-limit eigenvalues}
  \label{Fig-ex3}
\end{figure}

\begin{example}\label{Ex_Generalized}
  Consider the graph of figure \ref{Fig-ex3}:
  Note that the node farthest from the boundary is $u_2$ and $d(u_2,\boundary)=1/2+1/3=5/6$. 
  Then, the pair:
  \begin{equation*}
    f_1(u_2,u_3)=\left(\frac{5}{6},\,\frac{1}{2}\right)\;,\;\Lambda_1=\frac{6}{5}
  \end{equation*}
  is an $\infty$-eigenpair with 
  \begin{equation*}
    \left(\nodeweight_2,\, \nodeweight_3\right)
    =\left(\frac{6}{5},\,0\right)\;,
    \;\left(\edgeweight_{B2},
      \,\edgeweight_{23},
      \,\edgeweight_{3B}\right)
    =\left(0,\,\frac{2}{5},\,\frac{3}{5}\right)\,.
  \end{equation*}
  However, it is easy to verify that the following are also eigenpairs:
  \begin{equation*}
    \begin{aligned}
      &f_2(u_2,\,u_3)
        =\left(\frac{1}{6},\,-\frac{1}{6}\right),\;\Lambda_2
        =6\,,\;\left(\nodeweight_2,\, \nodeweight_3\right)
        =\left(3,\,3\right),
        \;\left(\edgeweight_{B2},
        \,\edgeweight_{23},
        \,\edgeweight_{3B}\right)
        =\left(0,\,1,\, 0\right)\,,\\
      &f(u_2,\,u_3)
        =\left(*,\frac{1}{2}\right),\,\Lambda
        =2,\; \left(\nodeweight_2,\nodeweight_3\right)
        =\left(0,2\right),
        \,\left(\edgeweight_{B2},
        \edgeweight_{23},
        \edgeweight_{3B}\right)
        =\left(0,\,0,\,1\right),
        *\in\left[\frac{1}{6},\frac{1}{2}\right]\!,
    \end{aligned}
  \end{equation*}
  where $*\in{\left[\frac{1}{6},\frac{1}{2}\right]}$ means that any value in $[1/6,1/2]$ produces an eigenfunction corresponding to the eigenvalue $\Lambda=2$.
  Note that $\Lambda_2=R_2=6$ is the second variational eigenvalue while $\Lambda_1<\Lambda=2<\Lambda_2$. 
  Moreover, it is worth noting that the pair $(f,\Lambda)$ does not satisfy~\eqref{limitin_inf_eigenvalue_eq}.
  Indeed, to get a solution of~\eqref{limitin_inf_eigenvalue_eq}
  $f(u_2)$ should be determined in such a way as to get:
  \begin{equation*}
    \inflap f(u_2)=0,\quad \|\Opc{\grad f}(u_2)\|_{\infty}-2f(u_2)\geq 0
  \end{equation*}
  or
  \begin{equation*}
    \inflap f(u_2)\geq 0,\quad \|\Opc{\grad f}(u_2)\|_{\infty}-2f(u_2)= 0\,.
  \end{equation*}
  In the first case, $\inflap f(u_2)=0$ implies:
  \begin{equation*}
    f_2(u_2)
    =\frac{3}{8}\;\Rightarrow\;\|\Opc{\grad f_2}(u_2)\|_{\infty}-2f(u_2)
    =\frac{3}{8}-\frac{6}{8}<0\,.
  \end{equation*}
  In the second case, the equality $\|\Opc{\grad f}(u_2)\|_{\infty}-2f(u_2)= 0$ implies $f_2(u_2)=\frac{3}{10}$ from which we get:
  \begin{equation*}
    \inflap f_2(u_2)=\frac{3}{10}-\frac{3}{5}<0\,.    
  \end{equation*}
\end{example}

\section{Conclusions and future directions}
\begin{NEW}
We studied the graph $\infty$-Laplacian eigenvalue problem on a graph, comparing different notions of $\infty$-eigenpairs. In particular, we have proved that any limit of $p$-Laplacian eigenpairs is a solution of the $\infty$-limit eigenvalue equation (\cref{limitin_infty_eigenvalue_theorem}), and that any solution of the $\infty$-limit eigenvalue equation is a generalized $\infty$-eigenpair (\cref{solutions_of_inf_eigenvalue_eq_are_generalized_inf_eigenpairs}). Additionally, by studying the topological properties of the $\infty$-eigenpairs, we have related the variational $\infty$-eigenvalues to the packing problem of the graph (\cref{Clarke_variational_eigen_characterization} and \cref{cor:lower-upper}).  

 We would like to note that the relations between the $\infty$-eigenpairs and the topology of the domain that we proved in the discrete setting hold also in the continuous setting \cite{Lind2, Lind3}. Moreover, the three different formulations of the $\infty$-eigenpairs are different in the continuous setting as well \cite{bungert2021eigenvalue, Lind2, Lind3, hynd2013nonuniqueness}. In particular, the inclusion 

    $$\left\{\begin{array}{cc}
         &\text{Limit of }\\
         &\plap\text{-eigenpairs}
    \end{array}\right\}
    \subsetneq
    \left\{\begin{array}{cc}
         & \text{Solutions of the} \\
         & \text{limit eigenvalue eq.}
    \end{array} \right\} 
    $$
    is known to hold also in the continuous setting \cite{Lind3, hynd2013nonuniqueness}. On the other hand, to the best of our knowledge, the second inclusion 
    $$
    \left\{\begin{array}{cc}
         & \text{Solutions of the} \\
         & \text{limit eigenvalue eq.}
    \end{array}\right\}
    \subsetneq
    \left\{\begin{array}{cc}
         & \text{Generalized } \\
         & \infty\text{-eigenpairs }
    \end{array}\right \}\,.$$
  is not known in the continuous setting.
  
    An open problem is also the study of the convergence of the discrete $\infty$-spectrum to the continuous $\infty$-spectrum on geometric random graphs. In this case the graph is given by a cloud of points obtained by sampling a ground truth measure and the study of convergence of the discrete spectrum to the continuous one would allow one to approximate the topological properties of the continuous domain in terms of information derived from the spectrum of the discrete $\infty$-Laplacian. Results in this direction have been presented in \cite{trillos2018variational, trillos2020error} for the linear Laplacian operator. In the same setting, different authors have also studied convergence of $\ell_p$ and $\ell_\infty$ based semi-supervised solutions \cite{calder2018game,calder2019consistency, slepcev2019analysis, roith2023continuum}. However, we are not aware of results about the convergence of the spectrum of nonlinear operators on geometric random graphs. 

     Finally, we note that the graph $p$-Laplacian operator can be given different formulations. Indeed, besides the variational graph $p$-Laplacian operator that we considered in \cref{discrete_plap-def}, it is also possible to consider the game theoretic $p$-Laplacian operator \cite{calder2018game, manfredi2015nonlinear, Elmoataz1} . This $p$-Laplacian operator is a linear combination of the 1 or 2 Laplacian and the $\infty$ one. Both the variational and game theoretic $p$-Laplacians can be interpreted as discretizations of the analogous continuous $p$-Laplacian operator. However, while the discrete variational $p$-Laplacian arises considering variational problems of the functional $f\to \|\grad f\|_p$, the game theoretic $p$-Laplacian does not have a variational formulation. For this reason, only the variational $p$-Laplacian is usually studied from a spectral point of view. Indeed critical points and values of functionals of the type “$\|\grad f\|_p/\|f\|$", where possibly different norms at the denominator can be considered, are of interest for different applications including spectral decompositions \cite{gilboa2018nonlinear, burger2016spectral} and clustering problems \cite{chang2016spectrum, Bhuler}. The investigation of the spectrum of the game theoretic $p$-Laplacian operator and its possible applications are currently an open problem.
 

\end{NEW}

\bibliographystyle{siamplain}
\bibliography{strings.bib, references.bib}
\end{document}